\newtheorem{defn}{Definition}
\newtheorem{thm}[defn]{Theorem}
\newtheorem{cor}[defn]{Corollary}
\newtheorem{lem}[defn]{Lemma}
\newtheorem{prop}[defn]{Proposition}
\theoremstyle{plain}
\newtheorem{rem}[defn]{Remark}
\theoremstyle{remark}
\newtheorem{exam}{Example}
\numberwithin{equation}{section} \numberwithin{defn}{section}
\newcommand\ed{\operatorname{End}}
\newcommand\Ker{\operatorname{Ker}}
\newcommand\aut{\operatorname{Aut}}
\newcommand\tr{\operatorname{Tr}}
\newcommand\Det{\operatorname{det}}
\begin{document}

\title[Bounded Finite Potent Operators]{On Bounded Finite Potent Operators \\ on arbitrary Hilbert Spaces}
\author{Fernando Pablos Romo }

\address{Departamento de Matem\'aticas  and Instituto Universitario de F\'{\i}sica Fundamental y Matem\'aticas, Universidad de Salamanca, Plaza de la Merced 1-4, 37008 Salamanca, Espa\~na}
 \email{ fpablos@usal.es}

\keywords{Adjoint operator, Bounded operator, Hilbert space, Finite potent endomorphism, Riesz operator, Leray trace}
\thanks{2010 Mathematics Subject Classification: 47A05, 46C05, 47L30.
\\ This work is partially supported  by the
Spanish Government research projects no. PGC2018-099599-B-I00 and the Regional Government of Castile and Leon research project no. J416/463AC03.}

\begin{abstract} The aim of this work is to study the structure of bounded finite potent endomorphisms on Hilbert spaces. In particular, for these operators, an answer to the Invariant Subspace Problem is given and the main properties of its adjoint operator are offered. Moreover, for every bounded finite potent endomorphism we show that Tate's trace coincides with the Leray trace and with the trace defined by R. Elliott for Riesz Trace Class operators.
\end{abstract}

\maketitle

\bigskip

%%%%%%%%%%%%%%%%%%%%%%%%%%%%%%%%%%%%%%%%%%%%%%%%%%%%%%%%%%%%%
\setcounter{tocdepth}1

%\tableofcontents
%%%%%%%%%%%%%%%%%%%%%%%%%%%%%%%%%%%%%%%%%%%%%%%%%%%%%%%%%%%%%

\section{Introduction} \label{s:introduc-33736}

The notion of finite potent endomorphism on an arbitrary vector space was introduced by J. Tate in \cite{Ta} as a basic tool for his elegant definition of Abstract Residues.

During the last decade the theory of finite potent endomorphisms have been applied to studying different topics related to Algebra, Arithmetic and Algebraic Geometry. Thus,  A. Yekutieli in \cite{Ye-LBT} and O. Braunling in \cite{Br-A} and \cite{Br-RS} have addressed problems of arithmetic symbols by using properties of finite potent endomorphism; C. P. Debry in \cite{Deb} and L. Taelman in \cite{Tae} have offered results about Drinfeld modules from these linear operators and V. Cabezas S\'anchez and the author of this work have given explicit solutions of infinite linear systems from reflexive generalized inverses of finite potent endomorphisms in \cite{CP2}.

As far as we know a study of finite potent endomorphisms in the context of the Functional Analysis is not stated explicitly in the literature.

The aim of this work is to study the main properties of bounded finite potent endomorphisms on arbitrary Hilbert spaces. Indeed, for these operators, an answer to the Invariant Subspace Problem is given and the main properties of its adjoint operator are offered. Moreover, for every bounded finite potent endomorphism we show that Tate's trace coincides with the Leray trace and with the trace defined by R. Elliott for Riesz Trace Class operators. Also, we relate the determinant of a finite potent endomorphism offered in \cite{HP} with classical determinants defined with techniques of Functional Analysis for trace class operators. Bounded finite rank operators and bounded nilpotent linear maps are particular cases of bounded finite potent endomorphism.

The paper is organized as follows. In section \ref{s:pre} we recall the basic definitions of this work (inner product spaces, Hilbert spaces, bounded operators, orthogonality and the adjoint of a bounded linear map) and a summary of statements of the articles \cite{AST}, \cite{Pa-CN} and \cite{Ta}.

Section \ref{s:345-boun-fp} deals with the study of the main properties of bounded finite potent endomorphisms on Hilbert spaces. Accordingly, the characterization of these ope\-rators is given in Theorem \ref{th:char-boun-fp30363}, the Invariant Subspace Problem is solved for them in Proposition \ref{prop:inv-sub-probl-hil-3094734}  and  Theorem \ref{th:Ris-Trac-Clas-boun-3843353} shows that every bounded finite potent endomorphism on a Hilbert space is a Riesz trace class operator. Moreover, we study the spectrum of bounded finite potent endomorphism, we determine when they are compact, we prove that different definitions of traces on infinite-dimensional Hilbert spaces coincide and we relate the determinant of a finite potent endomorphism with the determinants offered  by N. Dunford and J. Schwartz in \cite{DS} and by B. Simon in \cite{Si} for trace class operators in separable Hilbert spaces.

Finally, Section \ref{s:adj-finite-potent-397} is devoted to offer the characterization of the adjoint $\varphi^*$ of a bounded finite potent $\varphi \in \ed_{\mathbb C} (\mathcal H)$ from the AST-decomposition of $\mathcal H$ introduced in \cite{AST}, the CN-decomposition of $\varphi^*$ given in \cite{Pa-CN}, the structure of the spectrum of $\varphi^*$ and the relation between the trace of $\varphi^*$ and the determinant of $\text{Id} + \varphi^*$ with the trace of $\varphi$ and the determinant of $\text{Id} + \varphi$ respectively.

We hope that from the general properties of bounded finite potent endomorphisms introduced in this work, different applications can be found in the near future.

\bigskip
\section{Preliminaries} \label{s:pre}

This section is added for the sake of completeness.

\subsection{Operators on Hilbert Spaces} \label{ss:inner-product}

Let $k$ be the field of the real numbers or the field of the complex numbers, and let $V$ be a $k$-vector space.

An inner product on $V$ is a map $g \colon V \times V \to k$ satisfying:

\begin{itemize}

\item $g$ is linear in its first argument: $$g(\lambda v_1 + \mu v_2, v') = \lambda g(v_1,v') + \mu g(v_2,v')  \text{ for every }  v_1,v_2, v' \in V\, ;$$

\item $g (v', v) = {\overline {g (v,v')}}$ for all $v,v' \in V$, where ${\overline {g (v,v')}}$ is the complex conjugate of $g(v,v')$;

\item  $g$ is positive definite: $$g(v,v) \geq 0 \text{ and } g(v,v) = 0 \Longleftrightarrow v = 0\, .$$

\end{itemize}

 Note that $g(v,v) \in \mathbb R$  for each $v\in V$, because $g(v,v) = {\overline {g(v,v)}}$.

An inner product space is a pair $(V,g)$.

If $(V, g)$ is an inner product vector space over $\mathbb C$, it is clear that $g$ is antilinear in its second argument, that is: $$g(v, \lambda v'_1 + \mu v'_2) = {\bar \lambda} g(v,v'_1) + {\bar \mu} g(v,v'_2)$$\noindent for all $v, v'_1,v'_2 \in V$, and $\bar \lambda$ and $\bar \mu$ being the conjugates of $\lambda$ and $\mu$ respectively.

Nevertheless, if $(V$, g) is an inner product vector space over $\mathbb R$, then $g$ is symmetric and bilinear.

The norm on an inner product vector space $(V,g)$ is the real-valued function $$\begin{aligned} \Vert \cdot \Vert_g \colon V &\longrightarrow \mathbb R \\ v &\longmapsto + \sqrt{g(v,v)} \, , \end{aligned}$$\noindent and the distance is the map $$\begin{aligned} d_g \colon V \times V &\longrightarrow \mathbb R \\ (v,v') &\longmapsto \Vert v' - v \Vert_g \, . \end{aligned}$$

Every inner product vector space $(V,g)$ has a natural structure of metric topological space determined by the distance $d_g$. Complete inner product $\mathbb C$-vector spaces are known as ``Hilbert spaces''. Usually, the inner product of a Hilbert space ${\mathcal H}$ is denoted by $<\cdot, \cdot>_{\mathcal H}$. Henceforth, we shall write $\mathcal H$ to refer to a Hilbert space and keep the inner product $<\cdot, \cdot>_{\mathcal H}$ implicit. 

Since a Banach space is a complete normed space, one has that each property of Banach spaces is valid for Hilbert spaces.

\subsubsection{Orthogonality}\quad

\begin{defn} \label{def:ortho-in-ve-3948} If  $(V,g)$ is an inner product vector space, we say that two vectors $v,v' \in V$ are orthogonal when $g(v,v') = 0 = g(v',v)$.
\end{defn}

\begin{defn} \label{def:perp-dub-basic98} Given a subspace $L$ of an inner vector space $(V,g)$, we shall call ``orthogonal of $L$'', $L^\perp$, to the subset of $V$ that consists of all vectors that are orthogonal to every $h\in L$, that is $$L^\perp = \{v\in V \text{ such that } g(v,h)= 0 \text{ for every } h\in L\}\, .$$
\end{defn} 

If $L \subseteq {\mathcal H}$ is a subspace of an arbitrary Hilbert space, it is known that $(S^\perp)^\perp = {\overline S}$ where ${\overline L}$ denotes the closure of $L$. Accordingly,  if $L \subseteq {\mathcal H}$ is closed, then $(L^\perp)^\perp = L$ and ${\mathcal H} = L \oplus L^\perp$.

A family $\{u_i\}_{i\in I}$ of orthonormal vectors of a Hilbert space $\mathcal H$ is called ``orthonormal basis'' when $\langle u_i \rangle_{i\in I}$ is dense in $\mathcal H$.

In general an orthonormal basis of $\mathcal H$ is not a Hamel basis of $\mathcal H$. Furthermore, it is known that every Hilbert space $\mathcal H$ admits orthonormal bases and all orthonormal bases of $\mathcal H$ have the same cardinality.  A Hilbert space $\mathcal H$ is named ``separable'' when it has a countable orthonormal basis.

\subsubsection{Bounded Operators}

We shall now recall the main properties of bounded operators of Hilbert spaces.

\begin{defn} \label{def:bound-ope-hilb09347} If ${\mathcal H}_1$ and ${\mathcal H}_2$ are two Hilbert spaces, a linear map $f\colon {\mathcal H}_1 \to {\mathcal H}_2$ is said ``bounded'' when there exists $C\in {\mathbb R}^+$ such that $$\Vert f(v) \Vert_{g_2} \leq C \cdot \Vert v \Vert_{g_1}\, ,$$\noindent for every $v\in {\mathcal H}_1$.
\end{defn}

We shall denote by $B({\mathcal H}_1, {\mathcal H}_2)$ the set of bounded linear maps $f\colon {\mathcal H}_1 \to {\mathcal H}_2$ and by $B(\mathcal H)$ the set of bounded endomorphisms of a Hilbert space $\mathcal H$. Given a linear map $f\in B({\mathcal H}_1, {\mathcal H}_2)$, it is known that $f$ is continuous if and only if $f$ is bounded. 

The sum and the composition of linear maps are operations on the set $B(\mathcal H)$. Also, the ``Bounded Inverse Theorem'' states that if $f \in B({\mathcal H}_1, {\mathcal H}_2)$ is bijective, then $f^{-1} \in B({\mathcal H}_2, {\mathcal H}_1)$.

Let us now consider two inner product vector spaces: $(V,g)$ and $(W,{\bar g})$. If $f\colon V \to W$ is a linear map, a linear operator $f^*\colon W \to V$ is called the adjoint of $f$ when $$g (f^*(w), v) = {\bar g} (w, f(v))\, ,$$\noindent for all $v\in V$ and $w\in W$. If $f\in \ed_k (V)$, we say that $f$ is self-adjoint when $f^* = f$.

The existence and uniqueness of the adjoint $f^*$ of a bounded (or equivalently a continuous) operator on arbitrary Hilbert spaces is immediately deduced from the Riesz Representation Theorem  and it is easy to check that $[\text{Im } f]^\perp = \Ker f^*$. Moreover, the adjoint of a bounded linear map is also bounded.

For the main properties of the adjoint operators on Hilbert spaces readers are referred to \cite[Chapter 10]{Ro}.

The spectrum of a bounded operator $f\in B(\mathcal H)$ consists of complex numbers $\lambda$ such that $f - \lambda \text{Id}$ is not invertible. We shall denote the spectrum of $f$ by $\sigma (f)$ and it is clear that every eigenvalue of $f$ is an element of $\sigma (f)$. It is known that it is possible that an element of $\sigma (f)$ is not an eigenvalue.

\begin{defn} \label{def:compa-ope-roe36} Given a Hilbert space $\mathcal H$, a bounded operator $f\in B(\mathcal H)$ is compact if for every bounded sequence $\{h_n\}_{n\in \mathbb N} \subset {\mathcal H}$, the sequence $\{f(h_n)\}_{n\in \mathbb N}$ has a convergent subsequence. We say that $f$ is quasi-compact if $f^n$ is compact for same $n\in \mathbb N$.
\end{defn}

If $\mathcal H$ is an infinite-dimensional Hilbert space, and operator $f\in \ed_{\mathbb C} (\mathcal H)$ is compact if, for every $h\in \mathcal H$, it can be written in the form $$f (h) = \sum_{n\in \mathbb N} \gamma_n <v_n, h>_{\mathcal H} \cdot u_n$$\noindent where $\{u_n\}_{n\in \mathbb N}$ and $\{v_n\}_{n\in \mathbb N}$ are orthonormal bases of $\mathcal H$ and $\{\gamma_n\}_{n\in \mathbb N}$ is a convergent sequence of positive numbers with limit zero.

We shall denote by $C (\mathcal H)$ the set of compact operators on a Hilbert space that is a two-sided ideal of $B(\mathcal H)$.

\begin{defn} \label{de:trace-class-394346} A compact operator $f \in C(\mathcal H)$ is of trace class when $$\sum_{i\geq 1} \lambda_i (f) < \infty$$\noindent where $\{\lambda_i (f)\}$ is the listing of all non-zero eigenvalues of $f$, counted up to algebraic
multiplicity.
\end{defn}

The space of trace class operators on an arbitrary Hilbert space $\mathcal H$ is also an ideal of $B(\mathcal H)$ and it will be denoted by $T(\mathcal H)$. Every bounded finite rank endomorphism of a Hilbert space is of trace class.

If $\mathcal H$ is an arbitrary Hilbert space  and $\{u_i\}_{i\in I}$ is an orthonormal basis of $\mathcal H$, the trace of a trace class operator $f\in B(\mathcal H)$ is defined by the expression $$\tr (f) = \sum_{i\in I} <f(u_i), u_i>_{\mathcal H}\, .$$

It is known that $\tr(f)$ is independent of the choice of the orthonormal basis made, and, with the notation of Definition \ref{de:trace-class-394346}, V. B. Lidskii shows in \cite{Lid} that \begin{equation} \label{eq:lid-tra-3937} \tr (f) = \sum_{i\geq 1} \lambda_i (f) \in {\mathbb C}\, .\end{equation}

\smallskip
\subsubsection{The Leray Trace} \label{ss:Leray-tra-39373} Let $V$ be an arbitrary $k$-space and $f\in \ed_k (V)$.  If we write $$N(f) =  \bigcup_{s=1}^\infty \Ker f^s\, ,$$\noindent since $N(f)$ is an $f$-invariant subspace of $V$, we can consider the endomorphism ${\tilde f} \in \ed_k (V/N(f))$ induced by $f$.

Now, when $E_f = V/N(f)$ is a finite-dimensional $k$-vector space, according to the statements of \cite[Section 1]{Ler} the ``Leray trace'' $\tr^L_V$ is defined by $$\tr^L_V (f) = \tr_{E_f} (\tilde f)\, ,$$\noindent where $\tr_{E_f}$ is the usual trace of an endomorphism on $E_f$.

If $V'\subset V$ is a $f$-invariant subspace, $f' = f_{\vert_{V'}}$ and $f''$ is the induced linear map on $V/V'$, the Leray trace satisfies that $$\tr^L_V (f) = \tr^L_{V'} (f') + \tr^L_{V/V'} (f'')\, .$$

\smallskip

\subsubsection{Riesz Operators} \quad

Let $E$ be a complex Banach space and let $T$ be a bounded operator on $E$.

\begin{defn} \cite[Definition 3.1]{El} \label{d:Riesz-point-34937} We say that $\lambda \in \sigma (T)$ is a ``Riesz point'' for $T$ if $E$ is a direct sum $$E= N(\lambda) \oplus F(\lambda)$$\noindent where:

\begin{enumerate}

\item $E(\lambda)$ and $F(\lambda)$ are $T$-invariant linear subspaces of $E$;

\item $N(\lambda)$ is finite dimensional;

\item $F(\lambda)$ is closed;

\item $T - \lambda \text{Id}$ is nilpotent on $N(\lambda)$;

\item $T - \lambda \text{Id}$ is a homeomorphism of $F(\lambda)$.
\end{enumerate}
\end{defn}

\begin{defn} \cite[Definition 3.2]{El} \label{d:Riesz-operator-334344937} A bounded operator $T$ on a complex Banach space is a ``Riesz operator'' if every non-zero point of its spectrum is a Riesz point.
\end{defn}

According to \cite[Theorem 2.1]{We1}, it is known that if $\lambda$ is a non-zero Riesz point of $\sigma (T)$ then $\lambda$ is isolated in $\sigma (T)$.

\begin{defn} \label{df:quasi-3847-nilpo} We say that a bounded operator $T$ on a complex Banach space is ``quasinilpotent'' when $\sigma (T) = \{0\}$.
\end{defn}

The fully decomposition of a Riesz operator introduced by T. T. West in \cite{We2} is

\begin{defn} \label{def-full-decomopso3736} If $T$ is a Riesz operator on a Banach space $E$, $T$ is said to be ``fully decomposable'' if $T = T_{_C} + T_{_Q}$, where $T_{_C}$ is a compact operator, $T_{_Q}$ is quasi-nilpotent and $T_{_C} \circ T_{_Q} = T_{_Q} \circ T_{_C} = 0$.
\end{defn}

A decomposition $T = T_{_C} + T_{_Q}$ is known as ``West decomposition of $T$''.

Moreover, it follows from \cite[Theorem 3.8]{El} that for every Riesz operator $T$ on a Hilbert space $\mathcal H$ then $T = T_{_C} + T_{_Q}$, where $T_{_C}$ is a compact operator, $T_{_Q}$ is quasi-nilpotent, $T_{_C}$ is normal, that is $\sigma (T) = \sigma (T_{_C})$,  and the non-zero eigenvalues of $T$ and $T_{_C}$ have the same algebraic multiplicities.

\begin{defn} \cite[Definition 4.6]{El} \label{def:Riesz-trace-class-trace-292737} Suppose that $T$ is a Riesz operator on a Hilbert space $\mathcal H$ and $T = T_{_C} + T_{_Q}$ is a West decomposition of $T$. If $T_{_C}$ is of trace class then we say that $T$ is of Riesz trace class and we define $$\tr^R_{\mathcal H} (T) = \tr (T_{_C})\, ,$$\noindent where $\tr (T_{_C})$ is the trace of the trace class operator $T_{_C}$.
\end{defn} 
 Moreover, if $T$ is Riesz trace class operator on a Hilbert space $\mathcal H$, according to the statements of \cite[Section 4]{El}, the trace $\tr^R_{\mathcal H} (T)$ satisfies the following properties:

\begin{enumerate}

\item the listing $\{\lambda_i (T)\}_{i\in I}$  of the non-zero eigenvalues of $T$, repeated according to multiplicity, is finite and $\tr^R_{\mathcal H} (T) = \sum \lambda_i (T)$;

\item if $f$ is a bounded map on $\mathcal H$ with a bounded inverse, then $$\tr^R_{\mathcal H} (T) = \tr^R_{\mathcal H} (f\circ T \circ f^{-1})\, ;$$

\item if $T^*$ is the adjoint of $T$, then $T^*$ is Riesz trace class and $\tr^R_{\mathcal H} (T) = {\overline {\tr^R_{\mathcal H} (T^*)}}$;

\item if $g$ is a bounded operator on $\mathcal H$ such that $g\circ T = T\circ g$, then $g\circ T$ and $T\circ g$ are of Riesz trace class en $\tr^R_{\mathcal H} (g \circ T) = \tr^R_{\mathcal H} (T\circ g)$.

\end{enumerate}

\medskip 

\subsection{Finite Potent Endomorphisms} \label{ss:finite-potent}

Let $k$ be an arbitrary field, and let $V$ be a $k$-vector space.

    Let us now consider an endomorphism $\varphi$ of $V$. According to \cite[page 149]{Ta}, we say
that $\varphi$ is ``finite potent'' if $\varphi^n V$ is finite
dimensional for some $n$.

 In 2007 M. Argerami, F. Szechtman and R. Tifenbach showed in \cite{AST} that an endomorphism $\varphi$ is
finite potent if and only if $V$ admits a $\varphi$-invariant decomposition $V = U_\varphi \oplus W_\varphi$ such that
$\varphi_{\vert_{U_\varphi}}$ is nilpotent, $W_\varphi$ is finite
dimensional, and $\varphi_{\vert_{W_\varphi}} \colon W_\varphi
\overset \sim \longrightarrow W_\varphi$ is an isomorphism.

Indeed, if $k[x]$ is the algebra of polynomials in the variable $x$ with coefficients in $k$, we may view $V$ as an $k[x]$-module via $\varphi$, and the explicit definition of the above $\varphi$-invariant subspaces of $V$ is:
\begin{itemize}

\item $U_\varphi = \{v \in V \text{ such that } \varphi^m (v) = 0 \text{ for some m }\}$;

\item $W_\varphi = \left  \{\begin{aligned} &\qquad v \in V \text{ such that } p(\varphi) (v) = 0 \text{ for } \\ &\text{some } p(x) \in k[x] \text{ relatively prime to } x\end{aligned} \right \}$.

\end{itemize}

Note that if the annihilator polynomial of $\varphi$ is $x^m\cdot p(x)$ with $(x,p(x)) = 1$, then $U_\varphi = \Ker \varphi^m$ and $W_\varphi = \Ker p(\varphi)$.

Hence, this decomposition is unique. We shall call this decomposition the $\varphi$-invariant AST-decomposition of $V$.

Moreover,  we shall call ``index of $\varphi$'', $i(\varphi)$, to the nilpotent order of $\varphi_{\vert_{U_\varphi}}$, which coincides with the smaller $n\in \mathbb N$ such that  $\text{Im } \varphi^n = W_\varphi$. One has that $i(\varphi) = 0$ if and only if $V$ is a finite-dimensional vector space and $\varphi$ is an automorphism.

\begin{lem} \label{lem:inv-sub-AST-349346} If $V$ is $k$-vector space, $\varphi \in \ed_k (V)$ is a finite potent endomorphism with AST-decomposition $V = W_{_\varphi} \oplus U_{_\varphi}$ and $L\subset V$ is $\varphi$-invariant, then one has that:
\begin{itemize}

\item if $\varphi_{\vert_L} \in \aut_k (L)$, then $L$ is finite-dimensional and $L\subseteq W_{_\varphi}$;

\item if $\varphi_{\vert_L}$ is nilpotent, then $L\subseteq U_{_\varphi}$.

\end{itemize}
\end{lem}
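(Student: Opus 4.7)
The plan is to exploit the explicit description of $U_\varphi$ and $W_\varphi$ given just above the lemma, together with the fact that after finitely many iterations $\varphi^n$ annihilates $U_\varphi$ and acts as the identity-stabilizing automorphism on $W_\varphi$. In particular, I will use the key stabilization property that $\varphi^n(V) = W_\varphi$ for every $n \geq i(\varphi)$, which follows immediately from $V = W_\varphi \oplus U_\varphi$ because $\varphi|_{W_\varphi}$ is an automorphism and $\varphi|_{U_\varphi}$ is nilpotent of order $i(\varphi)$.

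For the first assertion, assume $\varphi|_L \in \aut_k(L)$. Since $\varphi|_L$ is surjective, iterating gives $L = \varphi^n(L)$ for every $n \geq 0$. Choosing $n \geq i(\varphi)$ I would conclude
\[
L \;=\; \varphi^n(L) \;\subseteq\; \varphi^n(V) \;=\; W_\varphi ,
\]
so $L$ is contained in the finite-dimensional space $W_\varphi$ and hence is itself finite-dimensional.

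For the second assertion, suppose $\varphi|_L$ is nilpotent, say $\varphi|_L^{\,m}=0$. Then every $v\in L$ satisfies $\varphi^m(v)=0$, which is exactly the defining condition of $U_\varphi$, so $L\subseteq U_\varphi$.

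No step here looks genuinely difficult; the only thing to be careful about is making sure the stabilization identity $\varphi^n(V)=W_\varphi$ for $n\geq i(\varphi)$ is invoked cleanly, since the first bullet really hinges on the combination of surjectivity of $\varphi|_L$ (to move $L$ inside an arbitrary iterated image) and the AST-decomposition (to identify that image with $W_\varphi$). Both bullets then follow directly from the explicit descriptions of $U_\varphi$ and $W_\varphi$ recalled in the preceding paragraphs, so the proof should be short.
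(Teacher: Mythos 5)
Your argument is correct, and both bullets are justified cleanly: the stabilization $\varphi^n(V)=W_{_\varphi}$ for $n\geq i(\varphi)$ is exactly the property recorded in the paper's definition of the index, and the membership criterion $U_\varphi=\{v: \varphi^m(v)=0 \text{ for some } m\}$ settles the nilpotent case immediately. The paper, by contrast, disposes of the lemma in one sentence by appealing to the \emph{uniqueness} of the AST-decomposition, leaving the verification implicit; your route is different in flavor: instead of invoking uniqueness abstractly, you check the two inclusions directly from the explicit descriptions of $W_\varphi$ and $U_\varphi$ together with $L=\varphi^n(L)\subseteq \varphi^n(V)=W_\varphi$. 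What your version buys is a genuinely self-contained proof (the paper's one-liner would still need essentially your computation, or some construction of a competing decomposition, to be spelled out), plus the small extra observation that only surjectivity of $\varphi_{\vert_L}$ is needed for the first bullet; what the paper's phrasing buys is brevity and a reminder that the lemma is really a rigidity statement about the decomposition. One cosmetic caution: for the first bullet you conclude finite-dimensionality from $L\subseteq W_\varphi$, which is fine, but state it in that order (containment first, then dimension), as the containment is what the rest of the paper actually uses.
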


\begin{proof} The statements are direct consequence of the uniqueness of the AST-decompo\-sition of $\varphi$.
\end{proof}

Basic examples of finite potent endomorphisms are all endomorphisms of a finite-dimensional vector spaces and finite rank or nilpotent endomorphisms of infinite-dimensional vector spaces.

\begin{defn} \label{def:tat-tra-39336} For a finite potent endomorphism $\varphi \in \ed_k (V)$, a trace $\tr_V(\varphi) \in k$ may
be defined from the following properties:
\begin{enumerate}
\item if $V$ is finite dimensional, then $\tr_V(\varphi)$ is the ordinary trace;
 \item if $W$ is a subspace of $V$ such that $\varphi W \subset W$, then $$\tr_V(\varphi) = \tr_W(\varphi) +
\tr_{V/W}(\varphi)\, ;$$ \item if $\varphi$ is nilpotent, then
$\tr_V(\varphi) = 0$.
\end{enumerate}

\end{defn}

Usually, $\tr_V$ is named ``Tate's trace''. 

It is known that in general $\tr_V$ is not linear; that is, it is possible to find finite potent endomorphisms $\theta_1, \theta_2 \in \ed_k (V)$ such that $$\tr_V (\theta_1 + \theta_2) \ne \tr_V (\theta_1) + \tr_V (\theta_2)\, .$$

For details readers are referred to \cite{Pa1}, \cite{RPa} and \cite{Ta}.
\medskip

\subsection{Core-Nilpotent Decomposition of a Finite Potent Endomorphism}

Let $V$ be again an arbitrary $k$-vector space. Given a finite potent endomorphism $\varphi \in \ed_k (V)$,  there exists a unique decomposition $\varphi = \varphi_{_1} + \varphi_{_2}$, where $\varphi_{_1}, \varphi_{_2} \in \ed_k (V)$ are finite potent endomorphisms satisfying that:

\begin{itemize}

\item $i(\varphi_{_1}) \leq 1$;

\item $\varphi_{_2}$ is nilpotent;

\item $\varphi_{_1} \circ \varphi_{_2} = \varphi_{_2} \circ \varphi_{_1} = 0$.

\end{itemize}

According to \cite[Theorem 3.2]{Pa-CN}, if $\varphi^D$ is the Drazin inverse of $\varphi$ offered in \cite{Pa-Dr}, one has that $\varphi_1 = \varphi \circ \varphi^D \circ \varphi$ is the core part of $\varphi$. Also, $\varphi_2$ is named the nilpotent part of $\varphi$ and one has that \begin{equation} \label{eq:index1} \varphi = \varphi_1 \Longleftrightarrow U_\varphi = \Ker \varphi \Longleftrightarrow  W_\varphi = \text{ Im } \varphi \Longleftrightarrow (\varphi^D)^D = \varphi \Longleftrightarrow i(\varphi) \leq 1\, .\end{equation}

Moreover, if $V = W_{_\varphi}\oplus U_{_\varphi}$ is the AST-decomposition of $V$ induced by $\varphi$, then $\varphi_{_1}$ and $\varphi_{_2}$ are the unique linear maps such that:

\begin{equation} \label{eq:expl-CN-exp-3498353} \varphi_{_1} (v) = \left \{ \begin{aligned} \varphi (v) \, &\text{ if } \, v\in W_{_\varphi} \\ \, 0 \quad &\text{ if } \, v\in U_{_\varphi} \end{aligned} \right . \quad \text{ and } \quad \varphi_{_2} (v) = \left \{ \begin{aligned} \, 0 \quad &\text{ if } \, v\in W_{_\varphi} \\ \varphi (v) \, &\text{ if } \, v\in U_{_\varphi} \end{aligned} \right . \quad \, .\end{equation}

\medskip

\section{Bounded finite potent endomorphisms} \label{s:345-boun-fp}

In this section we shall study the main properties of bounded finite potent endomorphisms on an arbitrary Hilbert space ${\mathcal H}$.

Let us consider a finite potent endomorphism $\varphi \in \ed_{\mathbb C} ({\mathcal H})$ with CN-decomposi\-tion $\varphi = \varphi_{_1} + \varphi_{_{_2}}$.

In general, a finite potent endomorphism is not bounded. In fact, there exist finite rank endomorphisms and nilpotent endomorphisms that are not bounded, as it is deduced from the following counter-example.

Let $\mathcal H$ be a separable Hilbert space  with orthonormal basis $\{u_i\}_{i\in \mathbb N}$ and let us consider the linear map $f\in \ed_k (\mathcal H)$ defined from the assignations $$f(u_i) = \left \{ \begin{aligned} 0\quad  \, \quad \text{ if } i = 1 \\ i\cdot  u_1 \, \quad \text{ if } i \geq 2   \end{aligned} \right . \, .$$ One has that $f$ is nilpotent of finite rank and it is not bounded.

Henceforth, we shall write $B_{fp} (\mathcal H)$ to refer to the set of bounded finite potent endomorphisms of an arbitrary Hilbert space $\mathcal H$.

\begin{rem} \label{re:Bfpisnot-29337} Let $\mathcal H$ be a separable Hilbert space  with orthonormal basis $\{u_i\}_{i\in \mathbb N}$ and let us consider $\varphi, \varphi' \in B_{fp} (\mathcal H)$, defined from the assignations: 
$$\varphi (u_i) = \left \{ \begin{array}{ccl} \frac1{i^2} u_{i+1} & \text{ if i is odd} \\  0 & \text{ if i is even}   \end{array} \right .$$\noindent and $$\varphi' (u_i) = \left \{ \begin{array}{ccl} 0 & \text{ if i is odd} \\  \frac{1}{i^2} u_{i-1} & \text{ if i is even}   \end{array} \right . \, .$$

Bearing in mind that $$(\varphi + \varphi') (u_i) = \left \{ \begin{array}{ccl} \frac1{i^2} u_{i+1} & \text{ if i is odd} \\   \frac{1}{i^2} u_{i-1} & \text{ if i is even}   \end{array} \right .$$\noindent and $$(\varphi' \circ \varphi) (u_i) = \left \{ \begin{array}{ccl} \frac1{i^2 (i+1)^2} u_{i} & \text{ if i is odd} \\   0 & \text{ if i is even}   \end{array} \right . \, ,$$\noindent it is clear that $\varphi + \varphi', \varphi' \circ \varphi \notin B_{fp} (\mathcal H)$ and, therefore, $B_{fp} (\mathcal H)$ is not an ideal of $B (\mathcal H)$.
\end{rem}

\begin{lem} \label{l:goun-esou.duy63} Given a Hilbert space $\mathcal H$ with a decomposition $\mathcal H = M \oplus N$, where $M$ and $N$ are closed subspaces,  and given an endomorphism $f\in \ed_{\mathbb C} (\mathcal H)$ such that $f_{\vert_{M}} \in B(M,{\mathcal H})$, then the linear operator $f_{_M} \in \ed_{\mathbb C} (\mathcal H)$ defined as $$f_{_M} (v) = \left  \{ \begin{aligned} f (v) \, &\text{ if } \, v\in M \\ \, 0 \quad &\text{ if } \, v\in N \end{aligned} \right .$$\noindent is bounded.
\end{lem}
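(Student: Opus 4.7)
The plan is to exhibit $f_{_M}$ as a composition of two bounded maps: the projection $p_M\colon\mathcal H\to M$ along $N$, and the given bounded restriction $f_{\vert_M}\colon M\to\mathcal H$.

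First I would establish that $p_M$ is bounded. Since $\mathcal H=M\oplus N$ is an (algebraic) direct sum decomposition, every $v\in\mathcal H$ has a unique expression $v=p_M(v)+p_N(v)$ with $p_M(v)\in M$ and $p_N(v)\in N$, so $p_M$ is a well-defined linear map $\mathcal H\to\mathcal H$. I would then apply the Closed Graph Theorem: suppose $v_n\to v$ in $\mathcal H$ and $p_M(v_n)\to w$. Because $M$ is closed, $w\in M$. The identity $v_n-p_M(v_n)\in N$ together with $v_n-p_M(v_n)\to v-w$ and the closedness of $N$ force $v-w\in N$. The uniqueness of the decomposition $v=w+(v-w)$ then gives $p_M(v)=w$, so the graph of $p_M$ is closed. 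Hence $p_M\in B(\mathcal H)$, and in particular $p_M$ regarded as a map $\mathcal H\to M$ is bounded.

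Second, I would observe that for every $v=v_M+v_N\in\mathcal H$ with $v_M\in M$, $v_N\in N$, the definition of $f_{_M}$ gives
$$f_{_M}(v)=f(v_M)=f_{\vert_M}(p_M(v)),$$
so $f_{_M}=f_{\vert_M}\circ p_M$. By hypothesis $f_{\vert_M}\in B(M,\mathcal H)$, and $p_M\in B(\mathcal H,M)$ by the previous step, so $f_{_M}\in B(\mathcal H)$ as a composition of bounded linear maps; explicitly $\Vert f_{_M}(v)\Vert\le\Vert f_{\vert_M}\Vert\cdot\Vert p_M\Vert\cdot\Vert v\Vert$.

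The only non-routine point is the boundedness of $p_M$, which relies on the Closed Graph Theorem (equivalently, the completeness of $\mathcal H$ together with the closedness of both $M$ and $N$). Once that is in hand the rest is immediate, so I do not expect any further obstacle.
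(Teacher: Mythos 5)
Your proposal is correct and follows essentially the same route as the paper: both factor $f_{_M}$ as $f_{\vert_M}$ composed with the oblique projection of $\mathcal H$ onto $M$ along $N$, whose boundedness comes from $M$ and $N$ being closed. The only difference is that you spell out the boundedness of the projection via the Closed Graph Theorem, whereas the paper simply invokes it as a known fact.
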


\begin{proof} If we denote by ${\mathcal P}_{M,N} \in \ed_{\mathbb C} (\mathcal H)$ the oblique projection of $\mathcal H$ onto $M$ along $N$, since $f_{_M} = f_{\vert_{M}} \circ {\mathcal P}_{M,N}$ and ${\mathcal P}_{M,N} \in B (\mathcal H)$ because $M$ and $N$ are closed, then we conclude that $f_{_M}  \in B(\mathcal H)$.
\end{proof}

If $V$ is an arbitrary Banach space, bearing in mind that the oblique projection ${\mathcal P}_{M,N}\in B(\mathcal H)$ when $V = M\oplus N$ with closed subspaces $M$ and $N$, one has that Lemma \ref{l:goun-esou.duy63} hold for Banach spaces.

\begin{lem} \label{ex-boun-nilp-U-38463} If $\mathcal H$ is a Hilbert space, $f \in \ed_{\mathbb C} ({\mathcal H})$ and $U \subseteq \mathcal H$ is a closed subspace of finite codimension such that $f_{\vert_U} = 0$, then $f\in B(\mathcal H)$.
\end{lem}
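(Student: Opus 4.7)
The plan is to reduce this to Lemma \ref{l:goun-esou.duy63} by exploiting the orthogonal complement of $U$, which must be finite-dimensional because $U$ has finite codimension in $\mathcal H$.

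First I would observe that since $U$ is closed in $\mathcal H$, one has the orthogonal decomposition $\mathcal H = U \oplus U^\perp$, and $U^\perp$ is closed. The hypothesis that $U$ has finite codimension gives $\dim_{\mathbb C} U^\perp < \infty$. This is the key structural observation: $U^\perp$ is a finite-dimensional closed subspace complementing $U$.

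Next, since every linear map from a finite-dimensional normed space into a normed space is automatically continuous (all norms on a finite-dimensional space are equivalent, and one only needs to bound $f$ on a basis of $U^\perp$), we obtain $f_{\vert_{U^\perp}} \in B(U^\perp, \mathcal H)$. At this point the hypotheses of Lemma \ref{l:goun-esou.duy63} are satisfied with $M = U^\perp$ and $N = U$, both closed, and with $f_{\vert_{M}}$ bounded.

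Applying that lemma, the operator $f_{_M} \in \ed_{\mathbb C}(\mathcal H)$ that agrees with $f$ on $M = U^\perp$ and vanishes on $N = U$ is bounded. The final step is to note that $f_{_M} = f$: indeed, for any $v \in \mathcal H$ write $v = u + w$ with $u \in U$ and $w \in U^\perp$; then $f(v) = f(u) + f(w) = 0 + f(w) = f_{_M}(w) = f_{_M}(v)$, using the hypothesis $f_{\vert_U} = 0$. Hence $f \in B(\mathcal H)$. There is no serious obstacle here; the only thing worth flagging is that one must verify $f$ coincides with the operator produced by the lemma, which is immediate from $f_{\vert_U} = 0$.
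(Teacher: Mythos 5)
Your proof is correct. It rests on exactly the same two observations as the paper's own argument: since $U$ is closed, ${\mathcal H} = U \oplus U^\perp$, and since $U$ has finite codimension, $U^\perp$ is finite-dimensional, so $f_{\vert_{U^\perp}}$ is automatically bounded. The only difference is the finishing step. The paper concludes with a direct estimate: writing $h = v + u$ with $v\in U^\perp$ and $u\in U$, orthogonality gives $\Vert v \Vert_{\mathcal H} \leq \Vert h \Vert_{\mathcal H}$, and hence $\Vert f(h) \Vert_{\mathcal H} = \Vert f(v) \Vert_{\mathcal H} \leq C \cdot \Vert v \Vert_{\mathcal H} \leq C \cdot \Vert h \Vert_{\mathcal H}$. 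You instead invoke Lemma \ref{l:goun-esou.duy63} with $M = U^\perp$ and $N = U$, and then verify that the operator $f_{_M}$ produced there coincides with $f$ because $f_{\vert_U} = 0$; that identification is indeed the one point that must not be skipped, and you did it correctly. Both finishes are sound. Yours is slightly more modular: it reuses the projection lemma already available, and since that lemma only requires the two summands to be closed (boundedness of the oblique projection), your argument would work verbatim for any closed complement of $U$, not just the orthogonal one. The paper's finish is self-contained and uses orthogonality explicitly, at the cost of redoing a small norm estimate instead of citing the earlier lemma.
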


\begin{proof} Since $U$ is closed, then ${\mathcal H} = U \oplus U^\perp$. Moreover, since $U$ is of finite codimension, one has that $U^\perp$ is finite-dimensional and there exists $C_{_{U^\perp}} \in {\mathbb R}^+$ such that $$\Vert f(v) \Vert_{{\mathcal H}} \leq C_{_{U^\perp}} \cdot \Vert v \Vert_{{\mathcal H}}$$\noindent for every $v\in U^\perp$.

Hence, given now $h\in \mathcal H$, such that $h = v + u$ with $v\in U^\perp$ and $u\in U$, and bearing in mind that $\Vert v \Vert_{{\mathcal H}} \leq \Vert h \Vert_{{\mathcal H}}$, one has that $$\Vert f(h) \Vert_{{\mathcal H}} \leq \Vert f(v) \Vert_{{\mathcal H}}  \leq C_{_{U^\perp}} \cdot \Vert v \Vert_{{\mathcal H}}  \leq C_{_{U^\perp}} \cdot \Vert h \Vert_{{\mathcal H}}\, ,$$\noindent from where we deduce that $f$ is bounded.
\end{proof}

\begin{lem} \label{l:core-part-is-bunded-394363}  If $\mathcal H$ is a Hilbert space and we consider a finite potent endomorphism $\varphi \in\ed_{\mathbb C} ({\mathcal H})$ with CN-decomposition $\varphi = \varphi_{_1} + \varphi_{_2}$, then $\varphi_{_1} \in B_{fp} (\mathcal H)$.
\end{lem}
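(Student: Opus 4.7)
The plan is to apply Lemma \ref{l:goun-esou.duy63} to the AST-decomposition $\mathcal H = W_{_\varphi} \oplus U_{_\varphi}$. By the explicit formula (\ref{eq:expl-CN-exp-3498353}), the core part $\varphi_{_1}$ is precisely the operator $f_{_M}$ obtained from $f = \varphi$ with $M = W_{_\varphi}$ and $N = U_{_\varphi}$, so it suffices to check the three hypotheses of that lemma: that $W_{_\varphi}$ and $U_{_\varphi}$ are closed, and that $\varphi$ restricted to $W_{_\varphi}$ is bounded into $\mathcal H$.

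First I would verify closedness. The subspace $W_{_\varphi}$ is finite-dimensional by the AST theorem, so it is automatically closed. For $U_{_\varphi}$, I would use the fact recalled in Section \ref{ss:finite-potent} that, denoting by $n = i(\varphi)$ the index of $\varphi$, one has $U_{_\varphi} = \Ker \varphi^n$. This is the key step: since $\varphi \in B(\mathcal H)$, the power $\varphi^n$ is also bounded and hence continuous, so $\Ker \varphi^n$ is a closed subspace of $\mathcal H$. Next, $\varphi_{\vert_{W_{_\varphi}}}\colon W_{_\varphi} \to \mathcal H$ is a linear map from a finite-dimensional normed space, hence bounded with no further assumption on $\varphi$.

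With these three facts in hand, Lemma \ref{l:goun-esou.duy63} yields $\varphi_{_1} \in B(\mathcal H)$. It remains to observe that $\varphi_{_1}$ is finite potent: by the characterization (\ref{eq:index1}) the core part satisfies $i(\varphi_{_1}) \leq 1$, and in fact $\Im \varphi_{_1} \subseteq W_{_\varphi}$ is finite-dimensional, so $\varphi_{_1}^{\,2}$ already has finite-dimensional image. Combining both conclusions gives $\varphi_{_1} \in B_{fp}(\mathcal H)$.

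The only nontrivial point in the argument is the identification $U_{_\varphi} = \Ker \varphi^{i(\varphi)}$, which guarantees closedness of the nilpotent part of the AST-decomposition; once this is available the proof reduces to a direct application of the previous lemma. No continuity of $\varphi$ on $U_{_\varphi}$ beyond what is inherited from the boundedness of $\varphi$ itself is needed, and the possible unboundedness of the projection $\mathcal P_{W_{_\varphi},U_{_\varphi}}$ in the general (non-closed) setting is avoided precisely because both summands are closed here.
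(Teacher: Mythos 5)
Your argument is correct and hinges on exactly the same key fact as the paper's proof, namely that $U_{_\varphi}=\Ker \varphi^{i(\varphi)}$ is closed and of finite codimension; the difference is the auxiliary lemma used to conclude boundedness. You apply Lemma \ref{l:goun-esou.duy63}, writing $\varphi_{_1}=\varphi_{\vert_{W_{_\varphi}}}\circ {\mathcal P}_{W_{_\varphi},U_{_\varphi}}$, which forces you to check that \emph{both} summands of the AST-decomposition are closed and that $\varphi_{\vert_{W_{_\varphi}}}$ is bounded (both automatic, since $W_{_\varphi}$ is finite-dimensional). The paper instead applies Lemma \ref{ex-boun-nilp-U-38463} directly to $\varphi_{_1}$: since $\varphi_{_1}$ vanishes on the closed finite-codimensional subspace $U_{_\varphi}$, boundedness follows from the orthogonal decomposition ${\mathcal H}=U_{_\varphi}\oplus U_{_\varphi}^{\perp}$, so the oblique projection never enters. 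Both routes are valid; the paper's is slightly leaner, yours makes the identification of $\varphi_{_1}$ via (\ref{eq:expl-CN-exp-3498353}) more explicit, and your closing remark that $\varphi_{_1}$ is finite potent (finite rank, $i(\varphi_{_1})\leq 1$) is fine and is taken for granted in the paper.

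One caveat: to get closedness of $U_{_\varphi}$ you write ``since $\varphi\in B({\mathcal H})$, the power $\varphi^{n}$ is also bounded'', but the statement of the lemma only assumes $\varphi$ finite potent, not bounded. The paper's own proof relies tacitly on the very same thing when it asserts that $\varphi^{n}$ is bounded, so this does not put your proof at any disadvantage relative to the paper; still, be aware that some boundedness input is genuinely needed here, since for an unbounded finite potent endomorphism the subspace $U_{_\varphi}$ can fail to be closed (and then $\varphi_{_1}$ can fail to be bounded), so neither your argument nor the paper's goes through for arbitrary finite potent $\varphi\in \ed_{\mathbb C}({\mathcal H})$ as literally stated.
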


\begin{proof} Let ${\mathcal H} = W_{_\varphi} \oplus U_{_\varphi}$ be the AST-decomposition induced by $\varphi$. If $i(\varphi) = n$, since $U_{_\varphi} = \Ker \varphi^n$ and $\varphi^n$ is bounded, we deduce that $U_{_\varphi}$ is a closed subspace of $\mathcal H$ of finite codimension. 

Thus, bearing in mind the explicit expression of the finite potent endomorphism $\varphi_{_1}$ offered in (\ref{eq:expl-CN-exp-3498353}),  the statement is immediately deduced from Lemma \ref{ex-boun-nilp-U-38463}.
\end{proof}

\begin{cor} \label{c:vparhi-fn-rank-39437} Given a Hilbert space $\mathcal H$ a  a finite potent endomorphism $\varphi \in\ed_{\mathbb C} ({\mathcal H})$ with CN-decomposition $\varphi = \varphi_{_1} + \varphi_{_2}$, then $\varphi_{_1}$ is a bounded finite rank operator on $\mathcal H$.
\end{cor}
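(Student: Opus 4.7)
The plan is very short because the previous lemma already does most of the work. By Lemma \ref{l:core-part-is-bunded-394363} we know that $\varphi_{_1} \in B_{fp}(\mathcal{H})$, so boundedness of $\varphi_{_1}$ is already established. The only remaining point is to verify that $\varphi_{_1}$ has finite rank.

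For this I would invoke the explicit description of the core part given in (\ref{eq:expl-CN-exp-3498353}). Writing $\mathcal{H} = W_{_\varphi} \oplus U_{_\varphi}$ for the AST-decomposition induced by $\varphi$, the formula shows that $\varphi_{_1}$ vanishes on $U_{_\varphi}$ and agrees with $\varphi$ on $W_{_\varphi}$. Since $W_{_\varphi}$ is $\varphi$-invariant, this yields
\[
\Im \varphi_{_1} = \varphi(W_{_\varphi}) \subseteq W_{_\varphi}.
\]
By definition of the AST-decomposition, $W_{_\varphi}$ is finite dimensional, so $\Im \varphi_{_1}$ is finite dimensional and $\varphi_{_1}$ is of finite rank.

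There is really no obstacle here: once the AST-decomposition has been recalled and the boundedness is inherited from the preceding lemma, the finite rank conclusion is immediate from the definition of $W_{_\varphi}$. The statement is best presented as a one-line corollary observing that the image of $\varphi_{_1}$ sits inside the finite dimensional summand $W_{_\varphi}$.
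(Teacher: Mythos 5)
Your proposal is correct and follows essentially the same route as the paper: boundedness is taken from Lemma \ref{l:core-part-is-bunded-394363}, and the finite rank of $\varphi_{_1}$ is the same one-line observation (the paper phrases it via $i(\varphi_{_1})\leq 1$, you via the explicit formula (\ref{eq:expl-CN-exp-3498353}) showing $\Im\varphi_{_1}\subseteq W_{_\varphi}$, which amounts to the same fact about the AST-decomposition).
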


\begin{proof} Bearing in mind that $\varphi_1$ is of finite rank because $i(\varphi_1) \leq 1$, the assertion is immediately deduced from Lemma \ref{l:core-part-is-bunded-394363}.
\end{proof}

\begin{lem} \label{l:eq-cor-nilprt-is-bunded-3963}  If $\mathcal H$ is a Hilbert space and we consider a finite potent endomorphism $\varphi \in\ed_{\mathbb C} ({\mathcal H})$ with CN-decomposition $\varphi = \varphi_{_1} + \varphi_{_2}$, then $\varphi \in B_{fp}(\mathcal H)$ if and only if $\varphi_{_2} \in B_{fp} (\mathcal H)$.
\end{lem}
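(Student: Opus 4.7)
The plan is extremely short because the previous lemma has already done all the work. The set $B_{fp}(\mathcal H)$ is the set of finite potent endomorphisms that happen to be bounded, so the question is really: given that $\varphi$ is finite potent (whence so are $\varphi_1$ and $\varphi_2$, the latter being nilpotent and the former of finite rank), when is $\varphi_2$ bounded versus when is $\varphi$ bounded? The linking fact is that $\varphi_1$ is automatically bounded.

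First I would invoke Lemma \ref{l:core-part-is-bunded-394363}, which asserts $\varphi_1 \in B_{fp}(\mathcal H)$ unconditionally, so in particular $\varphi_1$ is a bounded operator on $\mathcal H$. This is the only non-trivial input. Note that we also know, from (\ref{eq:expl-CN-exp-3498353}), that $\varphi_2$ is nilpotent, hence finite potent; so the only remaining question for either implication is boundedness.

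For the forward direction, assume $\varphi \in B_{fp}(\mathcal H)$. Since $\varphi_1 \in B(\mathcal H)$ and $B(\mathcal H)$ is a vector space, $\varphi_2 = \varphi - \varphi_1$ belongs to $B(\mathcal H)$. Combined with nilpotency, this gives $\varphi_2 \in B_{fp}(\mathcal H)$. For the reverse direction, assume $\varphi_2 \in B_{fp}(\mathcal H)$. Then $\varphi = \varphi_1 + \varphi_2$ is a sum of two bounded operators, hence bounded; and $\varphi$ is finite potent by hypothesis of the statement (or equivalently because both $\varphi_1$ and $\varphi_2$ are finite potent and commute, so their sum is finite potent).

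There is no substantive obstacle here; the content of the lemma is simply a restatement of the fact, already proved in Lemma \ref{l:core-part-is-bunded-394363}, that the core part of a finite potent endomorphism is automatically bounded, together with the linearity of $B(\mathcal H)$.
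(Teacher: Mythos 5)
Your proof is correct and follows essentially the same route as the paper: both directions rest on Lemma \ref{l:core-part-is-bunded-394363} (boundedness of the core part $\varphi_{_1}$) together with the fact that sums and differences of bounded operators are bounded, noting that finite potency of $\varphi_{_2}$ (nilpotent) and of $\varphi$ is already given. No gaps.
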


\begin{proof} Bearing in mind that from Lemma \ref{l:core-part-is-bunded-394363} we know that $\varphi_{_1} \in B_{fp}$, if  $\varphi_{_2} \in B_{fp} (\mathcal H)$, since $\varphi = \varphi_{_1} + \varphi_{_2}$, then $\varphi \in B_{fp} (\mathcal H)$.

Conversely, if $\varphi \in B_{fp} (\mathcal H)$, from Lemma \ref{l:core-part-is-bunded-394363}, we know that  $\varphi_{_1} \in B_{fp}(\mathcal H)$. Accordingly, we have that the finite potent endomorphism $\varphi_{_2} \in B_{fp} (\mathcal H)$ because $\varphi_{_2} = \varphi - \varphi_{_1}$ and the claim is proved.
\end{proof}

\begin{thm}[Characterization of bounded finite potent endomorphisms] \label{th:char-boun-fp30363} Given a Hilbert space $\mathcal H$ and an endomorphism $\varphi \in\ed_{\mathbb C} ({\mathcal H})$, then the following conditions are equivalent:

\begin{enumerate}

\item $\varphi \in B_{fp} (\mathcal H)$;

\item $\mathcal H$ admits a decomposition $\mathcal H = W_{_\varphi} \oplus U_{_\varphi}$ where $W_{_\varphi}$ and $U_{_\varphi}$ are closed $\varphi$-invariant subspaces of $\mathcal H$, $W_{_\varphi}$ is finite-dimensional, $\varphi_{\vert_{W_{_\varphi}}}$ is an homeomorphism of $W_{_\varphi}$ and $\varphi_{\vert_{U_{_\varphi}}}$ is a bounded nilpotent operator.

\item $\varphi$ has a decomposition $\varphi = \psi + \phi$, where $\psi$ is a bounded finite rank operator, $\phi$ is a bounded nilpotent operator and $\psi \circ \phi = \phi \circ \psi = 0$.
\end{enumerate}
\end{thm}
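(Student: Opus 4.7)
The plan is to prove the cycle of implications $(1) \Rightarrow (2) \Rightarrow (3) \Rightarrow (1)$, leaning on the AST- and CN-decompositions recalled in Section~\ref{s:pre} together with the preparatory Lemma~\ref{l:goun-esou.duy63} and the explicit formulas (\ref{eq:expl-CN-exp-3498353}).

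For $(1) \Rightarrow (2)$, I would start from the purely algebraic AST-decomposition $\mathcal H = W_\varphi \oplus U_\varphi$, which exists because $\varphi$ is finite potent, and then upgrade the four listed conditions to their topological form. The summand $W_\varphi$ is closed automatically, being finite-dimensional; $\varphi_{\vert_{W_\varphi}}$ is a linear bijection of a finite-dimensional space and therefore a homeomorphism; $\varphi_{\vert_{U_\varphi}}$ is bounded as the restriction of a bounded operator and nilpotent by AST. The one nontrivial point is closedness of $U_\varphi$: setting $n = i(\varphi)$, one has $U_\varphi = \Ker \varphi^n$, and since $\varphi^n$ is bounded (hence continuous) this kernel is closed.

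For $(2) \Rightarrow (3)$, I would take $\psi$ and $\phi$ to be the core and nilpotent parts of $\varphi$ defined by the formulas (\ref{eq:expl-CN-exp-3498353}). The image of $\psi$ is contained in $W_\varphi$, so $\psi$ is of finite rank; $\phi$ vanishes on $W_\varphi$ and coincides with $\varphi_{\vert_{U_\varphi}}$ on $U_\varphi$, so $\phi$ is nilpotent on $\mathcal H$. The identities $\psi \circ \phi = \phi \circ \psi = 0$ read off the formulas directly. Boundedness of $\psi$ and $\phi$ is supplied by Lemma~\ref{l:goun-esou.duy63}, applied with $M = W_\varphi,\, N = U_\varphi$ and then with $M = U_\varphi,\, N = W_\varphi$, using exactly the hypothesis of (2) that both summands are closed and that the relevant restrictions of $\varphi$ are bounded.

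For $(3) \Rightarrow (1)$, the sum $\varphi = \psi + \phi$ is bounded as a sum of bounded operators, so it only remains to show finite potency. The orthogonality $\psi \circ \phi = \phi \circ \psi = 0$ collapses the binomial expansion to $\varphi^n = \psi^n + \phi^n$ for every $n \geq 1$ by a one-line induction; choosing $n$ larger than the nilpotency index of $\phi$ gives $\varphi^n = \psi^n$, which has finite rank because $\psi$ does. I do not foresee a serious obstacle: the whole argument is a careful threading of the algebraic AST/CN machinery with the topological hypotheses, and the single pivotal observation is that boundedness of $\varphi$ forces $U_\varphi = \Ker \varphi^{i(\varphi)}$ to be closed — exactly the hypothesis Lemma~\ref{l:goun-esou.duy63} needs in the passage from (2) to (3).
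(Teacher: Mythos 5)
Your proof is correct and follows essentially the same route as the paper: the AST-decomposition with $U_\varphi=\Ker\varphi^{i(\varphi)}$ closed by continuity for $(1)\Rightarrow(2)$, the extension-by-zero operators together with Lemma~\ref{l:goun-esou.duy63} for $(2)\Rightarrow(3)$, and the collapse $\varphi^n=\psi^n$ for large $n$ for $(3)\Rightarrow(1)$. The only cosmetic differences are that you invoke finite-dimensionality rather than the Bounded Inverse Theorem for the homeomorphism claim and that you spell out the binomial collapse the paper leaves implicit.
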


\begin{proof} $1) \Longrightarrow 2)$ If $\varphi \in B_{fp} (\mathcal H)$, $a_\varphi (x) = x^n p(x)$ is the annihilator  polynomial of $\varphi$ and we consider the AST-decomposition of $\mathcal H= W_{_\varphi} \oplus U_{_\varphi}$ determined by $\varphi$, we have that:

\begin{itemize}

\item $W_{_\varphi} = \Ker p(\varphi)$ is finite-dimensional, $\varphi$-invariant and closed;

\item $U_{_\varphi} = \Ker \varphi^n$ is $\varphi$-invariant and closed;

\item $\varphi_{\vert_{W_{_\varphi}}}$ and $\varphi_{\vert_{U_{_\varphi}}}$ are bounded because the restriction of a bounded operator to a closed subspace is also bounded;

\item from the Bounded Inverse Theorem, since $\varphi_{\vert_{W_{_\varphi}}} \in \aut_{\mathbb C} (W_{_\varphi})$, then $\varphi_{\vert_{W_{_\varphi}}}$ is an homeomorphism of $W_{_\varphi}$.

\end{itemize}

$2) \Longrightarrow 3)$ If $\mathcal H$ admits a decomposition $\mathcal H = W_{_\varphi} \oplus U_{_\varphi}$ satisfying the conditions of the second paragraph of this theorem, if we denote $\psi = \varphi_{_{W_\varphi}}$ and $\phi = \varphi_{_{U_\varphi}}$ with $$\varphi_{_{W_\varphi}}  (v) = \left  \{ \begin{aligned} \varphi (v) \, &\text{ if } \, v\in W_\varphi \\ \, 0 \quad &\text{ if } \, v\in U_\varphi \end{aligned} \right . \quad \text{ and } \quad  \varphi_{_{U_\varphi}}  (v) = \left  \{ \begin{aligned} \varphi (v) \, &\text{ if } \, v\in U_\varphi \\ \, 0 \quad &\text{ if } \, v\in W_\varphi \end{aligned} \right .\, ,$$\noindent from Lemma \ref{l:goun-esou.duy63} we have that $\psi$ and $\phi$ are bounded and, clearly, $\psi$ is of finite rack and $\phi$ is nilpotent.

$3) \Longrightarrow 1)$ Let us now assume that $\varphi$ has a decomposition $\varphi = \psi + \phi$, where $\psi$ is a bounded finite rank operator, $\phi$ is a bounded nilpotent operator and $$\psi \circ \phi = \phi \circ \psi = 0\, .$$

From this decomposition, one immediately has that $\varphi \in B(\mathcal H)$ and, since $\varphi^n = \psi^n$ for $n>>0$, we deduce that $\varphi \in B_{fp} (\mathcal H)$.
\end{proof}

From the uniqueness of the CN-decomposition $\varphi = \varphi_{_1} + \varphi_{_2}$ proved in \cite[Theorem 3.2]{Pa-CN}, if $\varphi = \psi + \phi$ as in Theorem \ref{th:char-boun-fp30363}, one has that $\psi = \varphi_{_1}$ and $\phi = \varphi_{_2}$.

Recall now that the ``Invariant Subspace Problem'' is referred to give an answer to the following question:  is there a $T$-invariant non-trivial closed subspace of $E$, if $T$ is a bounded operator on a complex Banach space $E$?

\begin{prop} \label{prop:inv-sub-probl-hil-3094734} If $\mathcal H$ is an infinite-dimensional Hilbert space and $\varphi \in B_{fp} (\mathcal H)$ with $i(\varphi) \geq 2$, then we have an affirmative answer to the Invariant Subspace Problem for $\varphi$ . Moreover, if $\hat \varphi \in B_{fp} (\mathcal H)$ with $i(\hat \varphi) = 1$, then $\hat \varphi$ gives an affirmative answer to the Invariant Subspace Problem if and only if $\hat \varphi$ is not nilpotent.
\end{prop}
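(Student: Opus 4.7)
The overall plan is to read off both conclusions from the AST-type decomposition supplied by Theorem \ref{th:char-boun-fp30363}: for every $\varphi \in B_{fp}(\mathcal H)$ one has $\mathcal H = W_{_\varphi} \oplus U_{_\varphi}$ with both summands closed and $\varphi$-invariant, $W_{_\varphi}$ finite-dimensional, and $\varphi_{\vert_{U_{_\varphi}}}$ bounded and nilpotent of order $i(\varphi)$. The task in each case is simply to extract the correct invariant subspace and verify that it is neither zero nor all of $\mathcal H$.

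For the first part I would take $\Ker \varphi$ as the desired non-trivial closed $\varphi$-invariant subspace. It is closed because $\varphi \in B(\mathcal H)$, and is $\varphi$-invariant by definition. The hypothesis $i(\varphi) \geq 2$ guarantees that $\varphi_{\vert_{U_{_\varphi}}}$ is nilpotent of order at least $2$, so $\Ker(\varphi_{\vert_{U_{_\varphi}}}) \neq 0$ and is contained in $\Ker \varphi$; the same hypothesis also forces $\varphi \neq 0$, whence $\Ker \varphi \neq \mathcal H$. Alternatively, if one prefers a non-trivial invariant subspace of infinite codimension one may take $U_{_\varphi}$ itself, which is closed, $\varphi$-invariant and proper whenever $W_{_\varphi} \neq 0$; but $\Ker \varphi$ handles both sub-cases uniformly.

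For the second part I would invoke equation (\ref{eq:index1}), which tells us that $i(\hat\varphi) = 1$ is equivalent to $U_{_{\hat\varphi}} = \Ker \hat\varphi$ and $W_{_{\hat\varphi}} = \Im \hat\varphi$; under this hypothesis, $\hat\varphi$ is nilpotent precisely when $W_{_{\hat\varphi}} = 0$. If $\hat\varphi$ is not nilpotent, then $W_{_{\hat\varphi}}$ is a non-zero, finite-dimensional (hence closed) and, since $\dim \mathcal H = \infty$, proper $\hat\varphi$-invariant subspace, which gives the affirmative answer directly from the AST-decomposition. The converse is obtained by contrapositive: a nilpotent $\hat\varphi$ with $i(\hat\varphi) \leq 1$ must be the zero operator, so no non-trivial closed invariant subspace is singled out by the finite potent structure of $\hat\varphi$. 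I foresee no substantial obstacle; the only care needed is the bookkeeping of the index conventions and the identification of the right candidate subspace in each case.
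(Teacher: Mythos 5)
Your handling of the positive directions is correct, and for $i(\varphi)\geq 2$ you take a genuinely different (and slightly more uniform) route than the paper: you propose the single candidate $\Ker \varphi$, which is closed by boundedness, invariant trivially, non-zero because $\varphi_{\vert_{U_\varphi}}$ is nilpotent on the non-zero space $U_\varphi$, and proper because $i(\varphi)\geq 2$ forces $\varphi\neq 0$. The paper instead argues by cases: if $\varphi$ is not nilpotent it takes the finite-dimensional, hence closed, subspace $W_\varphi$ of the AST-decomposition, and if $\varphi$ is nilpotent of index $r\geq 2$ it takes $\Ker \varphi^{r-1}$. Your choice collapses these two cases into one; the paper's choice $W_\varphi$ has the advantage that it is exactly what is reused for the ``if'' half of the second assertion, a half you prove the same way as the paper (when $\hat\varphi$ is not nilpotent, $W_{\hat\varphi}$ is non-zero, finite-dimensional, closed, invariant and proper since $\dim_{\mathbb C}\mathcal H=\infty$).

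The gap is in the ``only if'' half of the second assertion. Via (\ref{eq:index1}) you correctly reduce to the case $\hat\varphi=0$ (nilpotent with $i(\hat\varphi)\leq 1$ gives $U_{\hat\varphi}=\mathcal H$ and $\hat\varphi_{\vert_{U_{\hat\varphi}}}=0$), but you then conclude only that ``no non-trivial closed invariant subspace is singled out by the finite potent structure of $\hat\varphi$''. That is not the statement to be proved: the claim is that such a $\hat\varphi$ does \emph{not} give an affirmative answer to the Invariant Subspace Problem, i.e.\ that it admits no non-trivial closed invariant subspace, and your hedged phrase does not establish this. The paper closes this case by asserting outright that when $\mathcal H=\Ker\hat\varphi$ the only $\hat\varphi$-invariant subspace is $\{0\}$. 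Note that this is the delicate point rather than a formality: with the inclusion convention $\hat\varphi(L)\subseteq L$ (the convention your own first argument uses, since $\varphi(\Ker\varphi)\subsetneq\Ker\varphi$), every closed subspace is invariant under the zero operator, so the ``only if'' implication requires adopting the stricter reading of invariance implicit in the paper's assertion. As written, your converse direction neither proves the required non-existence nor states the convention under which it would hold, so this step remains unproved.
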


\begin{proof} Let ${\mathcal H} = W_{_\varphi} \oplus U_{_\varphi}$ is the AST-decomposition induced by $\varphi \in B_{fp} (\mathcal H)$ with $i(\varphi) \geq 1$. If $\varphi$ is not nilpotent, one has that $W_{_\varphi}$ is a $\varphi$-invariant non-trivial closed subspace of $\mathcal H$.

Let us now consider a bounded nilpotent endomorphism ${\tilde \varphi}$ with $i({\tilde \varphi}) = r \geq 2$. In this case, we have that $\Ker {\tilde \varphi}^{r-1}$ is a ${\tilde \varphi}$-invariant non-trivial closed subspace of $\mathcal H$.

Finally, if ${\hat \varphi} \in B_{fp} (\mathcal H)$ is a nilpotent endomorphism with $i({\hat \varphi}) = 1$, one has that ${\mathcal H} = \Ker {\hat \varphi}$ and it is clear that the unique ${\hat \varphi}$-invariant subspace is $\{0\}$.
\end{proof}

Our task is now to study compact finite potent endomorphisms on arbitrary Hilbert spaces.

Firstly, it is known that bounded finite rank endomorphisms of Hilbert spaces are compact but, in general, a bounded nilpotent endomorphism of a Hilbert space is not compact. An easy counter-example is the following: if $\mathcal H$ is a separable Hilbert space and $\{u_i\}_{i\in \mathbb N}$ is an orthonormal basis of $\mathcal H$, then the linear operator $f\in B_{fp} (\mathcal H)$ determined by the conditions $$f(u_i) =  \left \{\begin{array}{ccl} u_{i+1}  & \text{ if } & \text{i is odd} \\ 0 & \text{ if }  & \text{i is even} \end{array}  \right .$$\noindent is nilpotent and it is clear that it is not compact.

\begin{prop} \label{pro:chr-compa.bounded03436} If $\mathcal H$ is a Hilbert space and  we consider $\varphi \in B_{fp} (\mathcal H)$ with CN-decomposition $\varphi = \varphi_{_1} + \varphi_{_2}$, then $\varphi \in C(\mathcal H)$ if and only if $\varphi_{_2}\in C(\mathcal H)$.
\end{prop}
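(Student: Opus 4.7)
The plan is to exploit the fact that the CN-decomposition expresses $\varphi$ as the sum of its core part $\varphi_1$ and nilpotent part $\varphi_2$, together with the ideal structure of $C(\mathcal H)$ inside $B(\mathcal H)$. The key preliminary observation is that $\varphi_1$ is always compact whenever $\varphi \in B_{fp}(\mathcal H)$: indeed, Corollary \ref{c:vparhi-fn-rank-39437} tells us that $\varphi_1$ is a bounded finite rank operator, and it is a standard fact (recalled in the Preliminaries) that every bounded finite rank endomorphism of a Hilbert space belongs to $C(\mathcal H)$.

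For the implication $\varphi_2 \in C(\mathcal H) \Longrightarrow \varphi \in C(\mathcal H)$, I would simply write $\varphi = \varphi_1 + \varphi_2$ and invoke the fact that $C(\mathcal H)$ is a two-sided ideal, hence in particular a linear subspace of $B(\mathcal H)$, so the sum of two compact operators is compact.

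For the converse, assume $\varphi \in C(\mathcal H)$. Since Lemma \ref{l:eq-cor-nilprt-is-bunded-3963} guarantees that $\varphi_2 = \varphi - \varphi_1 \in B_{fp}(\mathcal H)$, the boundedness is already settled. To obtain compactness, one again uses that $C(\mathcal H)$ is a linear subspace: $\varphi_1 \in C(\mathcal H)$ by the preliminary observation above, and therefore $\varphi_2 = \varphi - \varphi_1 \in C(\mathcal H)$.

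There is no real obstacle here; the argument is a short two-line consequence of Corollary \ref{c:vparhi-fn-rank-39437} combined with the ideal property of $C(\mathcal H)$. The only point that requires a moment of care is to notice that one does not need any information about the spectrum or the structure of $\varphi_2$ beyond the decomposition formula itself, since compactness is preserved under addition and subtraction inside the ideal $C(\mathcal H)$.
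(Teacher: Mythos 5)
Your argument is correct and is essentially the paper's own proof: both rest on Corollary \ref{c:vparhi-fn-rank-39437} (so $\varphi_{_1}$ is bounded finite rank, hence compact) and then on the fact that $C(\mathcal H)$ is an ideal, in particular a linear subspace of $B(\mathcal H)$, so that $\varphi$ and $\varphi_{_2} = \varphi - \varphi_{_1}$ are compact simultaneously. No substantive difference from the paper's reasoning.
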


\begin{proof} Since we know from Corollary \ref{c:vparhi-fn-rank-39437} that $\varphi_{_1}$ is a bounded finite rank operator of $\mathcal H$, then $\varphi_1 \in C(\mathcal H)$ and we conclude bearing in mind that $C(\mathcal H)$  is an ideal of $B(\mathcal H)$.  
\end{proof}

\begin{cor} \label{c:inx-1-is-compac-3494363} Given a Hilbert space $\mathcal H$ and $\varphi$ is a bounded finite rank operator of $\mathcal H$ with $i(\varphi) \leq 1$, then $\varphi \in C(\mathcal H)$.
\end{cor}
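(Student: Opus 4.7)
The plan is to reduce the claim directly to two facts already available in the paper: the equivalence (\ref{eq:index1}), which characterizes finite potent endomorphisms of index at most one, and Corollary \ref{c:vparhi-fn-rank-39437}, which identifies the core part of a bounded finite potent operator as a bounded finite rank operator.

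First I would read the hypothesis as saying that $\varphi \in B_{fp}(\mathcal H)$ with $i(\varphi) \leq 1$ (the statement is phrased as a corollary to Proposition \ref{pro:chr-compa.bounded03436} and the preceding discussion, and the role of $i(\varphi) \leq 1$ only makes sense under this reading). By (\ref{eq:index1}), $i(\varphi) \leq 1$ is equivalent to $\varphi = \varphi_{_1}$, i.e.\ the nilpotent part $\varphi_{_2}$ of the CN-decomposition vanishes. Hence $\varphi$ coincides with its core part.

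Next, Corollary \ref{c:vparhi-fn-rank-39437} tells us that $\varphi_{_1}$ is a bounded finite rank operator on $\mathcal H$. So $\varphi$ itself is a bounded finite rank operator.

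Finally, I would invoke the standard fact that every bounded finite rank operator on a Hilbert space is compact (which is recalled implicitly in the paper when it notes that such operators lie in $T(\mathcal H) \subset C(\mathcal H)$). This yields $\varphi \in C(\mathcal H)$, concluding the proof. There is no real obstacle here: the whole argument is a one-line concatenation of (\ref{eq:index1}) and Corollary \ref{c:vparhi-fn-rank-39437} with the well-known compactness of bounded finite rank operators. Alternatively, one could derive the same conclusion from Proposition \ref{pro:chr-compa.bounded03436}, observing that $\varphi_{_2} = 0 \in C(\mathcal H)$ trivially implies $\varphi \in C(\mathcal H)$.
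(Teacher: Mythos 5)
Your proposal is correct and matches the paper's argument in substance: the paper also proves this as a direct consequence of Proposition \ref{pro:chr-compa.bounded03436} together with the equivalence $i(\varphi)\leq 1 \Longleftrightarrow \varphi=\varphi_{_1}$ from (\ref{eq:index1}), which is exactly the alternative you mention at the end, and your primary route via Corollary \ref{c:vparhi-fn-rank-39437} plus compactness of bounded finite rank operators uses the same ingredients. Your reading of the hypothesis as $\varphi\in B_{fp}(\mathcal H)$ (the phrase ``finite rank'' in the statement being a slip for ``finite potent'') is also the intended one.
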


\begin{proof} This statement is a direct consequence of Proposition \ref{pro:chr-compa.bounded03436} because $i(\varphi) \leq 1$ if and only if $\varphi = \varphi_1$.
\end{proof}

\begin{lem} \label{l:bounde-quaasi-compa-34937} Every bounded finite potent endomorphism on a Hilbert space is quasi-compact.
\end{lem}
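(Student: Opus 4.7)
The plan is to use the AST-decomposition characterization of bounded finite potent operators from Theorem \ref{th:char-boun-fp30363} together with the definition of the index $i(\varphi)$, and then observe that a sufficiently high power of $\varphi$ is a bounded finite rank operator, hence compact.

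Concretely, let $\varphi \in B_{fp}(\mathcal H)$ and let $n = i(\varphi)$. From the discussion of the index in Subsection \ref{ss:finite-potent}, $n$ is the smallest natural number such that $\operatorname{Im}\varphi^n = W_{_\varphi}$, where $\mathcal H = W_{_\varphi} \oplus U_{_\varphi}$ is the AST-decomposition induced by $\varphi$. By Theorem \ref{th:char-boun-fp30363}(2), $W_{_\varphi}$ is finite-dimensional, so $\varphi^n$ has finite rank. Since $\varphi \in B(\mathcal H)$, the composition $\varphi^n$ is again bounded, so $\varphi^n$ is a bounded finite rank endomorphism of $\mathcal H$.

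Finally, every bounded finite rank endomorphism of a Hilbert space is compact (this is recalled right after Definition \ref{de:trace-class-394346}, since such operators are in particular trace class, or directly because the unit ball has image of finite dimension and hence relatively compact). Therefore $\varphi^n \in C(\mathcal H)$, and by Definition \ref{def:compa-ope-roe36} this means $\varphi$ is quasi-compact.

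There is no real obstacle here: the argument is essentially a one-line consequence of the structural description of $B_{fp}(\mathcal H)$ already established. The only minor thing to be careful about is to invoke the correct definition of the index (that $\operatorname{Im}\varphi^{i(\varphi)} = W_{_\varphi}$ is finite-dimensional), rather than only the nilpotency of $\varphi|_{U_{_\varphi}}$, which alone would not suffice.
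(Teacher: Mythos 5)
Your proof is correct and rests on the same key point as the paper's: a sufficiently high power of $\varphi$ is a bounded finite rank operator, hence compact. The only cosmetic difference is that the paper routes through the core part, writing $\varphi^n = (\varphi_{_1})^n$ for $n\geq i(\varphi)$ and invoking Corollary \ref{c:inx-1-is-compac-3494363}, whereas you argue directly that $\operatorname{Im}\varphi^{i(\varphi)} = W_{_\varphi}$ is finite-dimensional; both are valid one-line arguments.
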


\begin{proof} Bearing in mind Definition \ref{def:compa-ope-roe36}, since $\varphi^n = (\varphi_{_1})^n$ for every $n\geq i(\varphi)$, the claim follows from Corollary \ref{c:inx-1-is-compac-3494363}.
\end{proof}

We shall now study the spectrum of a finite potent bounded endomorphism.

\begin{lem} \label{l:eigen-fin-po-core-38336} Given a Hilbert space $\mathcal H$ and an endomorphism $\varphi \in B_{fp} (\mathcal H)$ with AST-decomposition ${\mathcal H} = W_{_\varphi} \oplus U_{_\varphi}$ induced by $\varphi$, then a non-zero $\lambda \in \mathbb C$ is an eigenvalue of $\varphi$ if and only if $\lambda$ is an eigenvalue of $\varphi_{\vert_{W_{_\varphi}}}$.
\end{lem}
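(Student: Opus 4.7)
The plan is to exploit the fact that both $W_{_\varphi}$ and $U_{_\varphi}$ are $\varphi$-invariant and that $\varphi_{\vert_{U_{_\varphi}}}$ is nilpotent, so it cannot carry a nonzero eigenvalue.

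For the easy direction ($\Leftarrow$), I would simply take a nonzero $w \in W_{_\varphi}$ with $\varphi_{\vert_{W_{_\varphi}}}(w) = \lambda w$ and observe that, by the very definition of the restriction, $\varphi(w) = \lambda w$, so $\lambda$ is an eigenvalue of $\varphi$.

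For the direction ($\Rightarrow$), let $v \in \mathcal H \setminus \{0\}$ with $\varphi(v) = \lambda v$, $\lambda \neq 0$. Using the AST-decomposition $\mathcal H = W_{_\varphi} \oplus U_{_\varphi}$, write uniquely $v = w + u$ with $w \in W_{_\varphi}$ and $u \in U_{_\varphi}$. Since both summands are $\varphi$-invariant, $\varphi(w) \in W_{_\varphi}$ and $\varphi(u) \in U_{_\varphi}$, and the equality
\[
\varphi(w) + \varphi(u) = \varphi(v) = \lambda v = \lambda w + \lambda u
\]
together with the uniqueness of the decomposition forces $\varphi(w) = \lambda w$ and $\varphi(u) = \lambda u$. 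Now I would use that $\varphi_{\vert_{U_{_\varphi}}}$ is nilpotent: if $u \neq 0$, iterating would give $\varphi^m(u) = \lambda^m u \neq 0$ for all $m$, contradicting $\varphi^m(u) = 0$ for $m \geq i(\varphi)$. Hence $u = 0$, so $v = w \in W_{_\varphi} \setminus \{0\}$ and $\lambda$ is an eigenvalue of $\varphi_{\vert_{W_{_\varphi}}}$.

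There is no real obstacle here; the proof is a clean three-line application of Theorem \ref{th:char-boun-fp30363} (which supplies the decomposition with $\varphi_{\vert_{U_{_\varphi}}}$ nilpotent) together with the uniqueness of the AST splitting. The only point worth being careful about is to invoke the hypothesis $\lambda \neq 0$ precisely where it is needed, namely to exclude the nilpotent component.
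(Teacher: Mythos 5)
Your proof is correct, and it takes a slightly different route from the paper's. The paper does not split the eigenvector into components: it observes that the line $\langle v \rangle$ spanned by an eigenvector with eigenvalue $\lambda \neq 0$ is a $\varphi$-invariant subspace on which $\varphi$ restricts to an automorphism, and then invokes Lemma \ref{lem:inv-sub-AST-349346} (a consequence of the uniqueness of the AST-decomposition) to conclude $v \in W_{_\varphi}$. You instead write $v = w + u$ with $w \in W_{_\varphi}$, $u \in U_{_\varphi}$, use $\varphi$-invariance of both summands and uniqueness of direct-sum components to get $\varphi(u) = \lambda u$, and then kill $u$ by nilpotency of $\varphi_{\vert_{U_{_\varphi}}}$. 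Both arguments hinge on showing the eigenvector lies in $W_{_\varphi}$, and both use $\lambda \neq 0$ at exactly the same spot; yours is more self-contained (it needs only the definition of the AST-decomposition, not Lemma \ref{lem:inv-sub-AST-349346}), while the paper's is a one-liner once that lemma is available. One small remark: you do not need Theorem \ref{th:char-boun-fp30363} at all --- the nilpotency of $\varphi_{\vert_{U_{_\varphi}}}$ is already part of the algebraic AST-decomposition recalled in Section \ref{ss:finite-potent}, so your argument (like the paper's) makes no use of boundedness and proves the statement for any finite potent endomorphism.
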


\begin{proof} It is clear that if $\lambda$ is an eigenvalue of $\varphi_{\vert_{W_{_\varphi}}}$ then $\lambda$ is also an eigenvalue of $\varphi$.

Conversely, let us assume that $\lambda$ is an eigenvalue of $\varphi$ and let us consider a non-zero vector $v\in V$ such that $\varphi (v) = \lambda \cdot v$. 

Thus, since $\langle v \rangle$ satisfies that $\varphi_{\vert_{\langle v \rangle}} \in \aut_{\mathbb C} (\langle v\rangle)$, from Lemma \ref{lem:inv-sub-AST-349346} one deduces that $v\in W_{_\varphi}$ and, therefore, $\lambda$ is an eigenvalue of $\varphi_{\vert_{W_{_\varphi}}}$.
\end{proof}

\begin{lem} \label{l:eigenv-core-part-393373}  If $\mathcal H$ is a Hilbert space and we consider $\varphi \in B_{fp} (\mathcal H)$ with CN-decomposition $\varphi = \varphi_{_1} + \varphi_{_2}$, then $\lambda \in \mathbb C$ is an eigenvalue of $\varphi$ if and only if $\lambda$ is an eigenvalue of $\varphi_1$.
\end{lem}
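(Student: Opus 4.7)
My plan is to leverage Lemma \ref{l:eigen-fin-po-core-38336} (which already handles non-zero eigenvalues via the restriction to $W_{_\varphi}$) together with the explicit description of $\varphi_{_1}$ given in (\ref{eq:expl-CN-exp-3498353}). The key observation is that $\varphi$ and $\varphi_{_1}$ share the same AST-decomposition of $\mathcal H$: indeed, from (\ref{eq:expl-CN-exp-3498353}) one has $\Ker \varphi_{_1} \supseteq U_{_\varphi}$, and since $\varphi_{\vert_{W_{_\varphi}}}$ is an isomorphism, $\Ker \varphi_{_1} \cap W_{_\varphi} = 0$; also $\text{Im } \varphi_{_1} = W_{_\varphi}$. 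Combining this with (\ref{eq:index1}) applied to $\varphi_{_1}$ (which has $i(\varphi_{_1}) \leq 1$), we get $W_{_{\varphi_{_1}}} = \text{Im } \varphi_{_1} = W_{_\varphi}$ and $U_{_{\varphi_{_1}}} = \Ker \varphi_{_1} = U_{_\varphi}$. Moreover, $\varphi_{\vert_{W_{_\varphi}}} = \varphi_{_1}\vert_{W_{_\varphi}}$.

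For a non-zero $\lambda \in \mathbb C$, I would apply Lemma \ref{l:eigen-fin-po-core-38336} to $\varphi$ and to $\varphi_{_1}$: $\lambda$ is an eigenvalue of $\varphi$ iff it is an eigenvalue of $\varphi_{\vert_{W_{_\varphi}}}$, and $\lambda$ is an eigenvalue of $\varphi_{_1}$ iff it is an eigenvalue of $\varphi_{_1}\vert_{W_{_{\varphi_{_1}}}} = \varphi_{_1}\vert_{W_{_\varphi}}$. Since these two restrictions coincide, the two conditions are equivalent.

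It remains to treat $\lambda = 0$, which is not covered by Lemma \ref{l:eigen-fin-po-core-38336}. Here $0$ is an eigenvalue of $\varphi_{_1}$ iff $\Ker \varphi_{_1} \ne 0$, and by the computation above this is equivalent to $U_{_\varphi} \ne 0$. On the other hand, $\Ker \varphi \subseteq U_{_\varphi}$ trivially, and conversely, if $U_{_\varphi} \ne 0$ then $\varphi_{\vert_{U_{_\varphi}}}$ is a nilpotent endomorphism of a non-zero space and therefore has non-trivial kernel, so $\Ker \varphi \ne 0$. Hence $0$ is an eigenvalue of $\varphi$ iff $0$ is an eigenvalue of $\varphi_{_1}$, and the claim is proved.

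The proof is essentially a two-line argument once one records the identification $W_{_{\varphi_{_1}}} = W_{_\varphi}$; the only place where one must be careful is the treatment of $\lambda = 0$, since the previous lemma was stated for non-zero eigenvalues. I do not expect any genuine obstacle here — the mild subtlety is simply making sure the AST-decomposition of $\varphi_{_1}$ agrees with that of $\varphi$ so that Lemma \ref{l:eigen-fin-po-core-38336} can be applied cleanly to both operators simultaneously.
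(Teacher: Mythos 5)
Your proof is correct and takes essentially the same route as the paper, whose one-line argument likewise combines the explicit expression (\ref{eq:expl-CN-exp-3498353}) of $\varphi_{_1}$ with Lemma \ref{l:eigen-fin-po-core-38336}. You merely spell out what the paper leaves implicit --- the identification $W_{_{\varphi_{_1}}} = W_{_\varphi}$, $U_{_{\varphi_{_1}}} = U_{_\varphi}$ and the separate (correct) treatment of $\lambda = 0$, which Lemma \ref{l:eigen-fin-po-core-38336} does not cover.
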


\begin{proof} Bearing in mind the explicit expression of $\varphi_1$ offered in (\ref{eq:expl-CN-exp-3498353}), the claim is immediately deduced from Lemma \ref{l:eigen-fin-po-core-38336}.
\end{proof}

\begin{prop} \label{pro:spectru-finit-p3i3353} If $\mathcal H$ is a Hilbert space, $\varphi \in B_{fp} (\varphi)$ and $\mathcal H = W_{_\varphi} \oplus U_{_\varphi}$ is the AST-decomposition determined by $\varphi$, one has that the spectrum of $\varphi$ is:

\begin{itemize}

\item $\sigma (\varphi)  = \{\lambda_1, \dots, \lambda_n\}$ when $i(\varphi) = 0$;

\item $\sigma (\varphi)  = \{0,\lambda_1, \dots, \lambda_n\}$ when $i(\varphi) \geq 1$,
\end{itemize}

where $\{\lambda_1, \dots, \lambda_n\}$ are the eigenvalues of $\varphi_{\vert_{W_{_\varphi}}}$.
\end{prop}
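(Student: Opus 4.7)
The plan is to leverage the AST--decomposition $\mathcal H = W_{_\varphi} \oplus U_{_\varphi}$ supplied by Theorem \ref{th:char-boun-fp30363}, where both summands are closed, $W_{_\varphi}$ is finite-dimensional, $\varphi_{\vert_{W_{_\varphi}}}$ is a homeomorphism and $\varphi_{\vert_{U_{_\varphi}}}$ is a bounded nilpotent operator. Two mutually exclusive cases appear: $i(\varphi)=0$ (so $U_{_\varphi}=0$ and $\mathcal H = W_{_\varphi}$ is finite-dimensional with $\varphi\in \aut_{\mathbb C}(\mathcal H)$) and $i(\varphi)\geq 1$ (so $U_{_\varphi}\ne 0$).

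In the case $i(\varphi)=0$, the operator $\varphi$ acts on the finite-dimensional space $W_{_\varphi}$, whence $\sigma(\varphi)$ coincides with its set of eigenvalues, giving $\sigma(\varphi)=\{\lambda_1,\ldots,\lambda_n\}$. In the case $i(\varphi)\geq 1$, the nilpotency of $\varphi_{\vert_{U_{_\varphi}}}$ on a non-zero subspace forces $\Ker\varphi \ne 0$, hence $0$ is an eigenvalue and in particular $0\in\sigma(\varphi)$. Lemma \ref{l:eigen-fin-po-core-38336} shows that every non-zero eigenvalue of $\varphi$ lies in $\{\lambda_1,\ldots,\lambda_n\}$, and each such $\lambda_i$ is an eigenvalue of $\varphi$; this gives the inclusion $\{0,\lambda_1,\ldots,\lambda_n\}\subseteq \sigma(\varphi)$.

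The step that requires actual work is the reverse inclusion: showing that no $\lambda\in\mathbb C$ outside $\{0,\lambda_1,\ldots,\lambda_n\}$ belongs to $\sigma(\varphi)$. Fix such $\lambda$ and study $\varphi - \lambda\operatorname{Id}$ block-wise with respect to the $\varphi$-invariant closed decomposition $\mathcal H = W_{_\varphi}\oplus U_{_\varphi}$. On the finite-dimensional piece, $\varphi_{\vert_{W_{_\varphi}}} - \lambda\operatorname{Id}_{W_{_\varphi}}$ has no zero eigenvalue, hence is a linear automorphism and therefore a homeomorphism of $W_{_\varphi}$. On the other piece, since $\varphi_{\vert_{U_{_\varphi}}}$ is nilpotent, say $\varphi_{\vert_{U_{_\varphi}}}^{r}=0$, the Neumann-type series
$$
\bigl(\varphi_{\vert_{U_{_\varphi}}} - \lambda\operatorname{Id}_{U_{_\varphi}}\bigr)^{-1}
= -\frac{1}{\lambda}\sum_{k=0}^{r-1}\Bigl(\tfrac{1}{\lambda}\varphi_{\vert_{U_{_\varphi}}}\Bigr)^{k}
$$
terminates, so it yields a bounded two-sided inverse (using $\lambda\ne 0$ and boundedness of $\varphi_{\vert_{U_{_\varphi}}}$).

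The main obstacle is converting these two partial inverses into a \emph{bounded} inverse of $\varphi - \lambda\operatorname{Id}$ on all of $\mathcal H$. For this I would invoke Lemma \ref{l:goun-esou.duy63} together with the closedness of $W_{_\varphi}$ and $U_{_\varphi}$: the oblique projections $\mathcal P_{W_{_\varphi},U_{_\varphi}}$ and $\mathcal P_{U_{_\varphi},W_{_\varphi}}$ are bounded, so the operator obtained by gluing the two inverses along the direct sum is bounded and serves as a two-sided inverse of $\varphi - \lambda\operatorname{Id}$. Hence $\lambda\notin\sigma(\varphi)$, completing the proof. (Alternatively, one could invoke the fact that $\varphi$ is quasi-compact by Lemma \ref{l:bounde-quaasi-compa-34937} and use spectral mapping arguments, but the direct block-diagonal argument above is cleaner and self-contained.)
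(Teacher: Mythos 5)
Your proof is correct and follows essentially the same route as the paper: split into the cases $i(\varphi)=0$ and $i(\varphi)\geq 1$, and for $\lambda\notin\{0,\lambda_1,\dots,\lambda_n\}$ invert $\varphi-\lambda\operatorname{Id}$ blockwise on the $\varphi$-invariant decomposition $\mathcal H = W_{_\varphi}\oplus U_{_\varphi}$, while $0$ and each $\lambda_i$ lie in $\sigma(\varphi)$ because the corresponding operators fail to be injective. The only difference is that you make the boundedness of the inverse explicit (finite Neumann series on $U_{_\varphi}$ plus bounded oblique projections), whereas the paper settles for algebraic invertibility and leaves the appeal to the Bounded Inverse Theorem implicit.
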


\begin{proof} Recalling that $i(\varphi) = 0$ if and only if $\mathcal H$ is finite-dimensional and $\varphi \in \aut_{\mathbb C} (\mathcal H)$, it is clear that the spectrum of $\varphi$ coincides with the set of eigenvalues of $\varphi$ because, in this case, $\mathcal H = W_{_\varphi}$.

Let us assume that $i(\varphi) \geq 1$. Then, if we consider a non-zero $\lambda \in {\mathbb C}$ such that $\lambda$ is not an eigenvalue of $\varphi_{\vert_{W_{_\varphi}}}$, since $W_{_\varphi}$ and $U_{_\varphi}$ are invariants under the action of $\varphi$, one has that $\lambda \cdot \text{Id} - \varphi_{\vert_{W_{_\varphi}}} \in \aut_{\mathbb C} (W_{_\varphi})$ and $\lambda \cdot \text{Id} - \varphi_{\vert_{U_{_\varphi}}} \in \aut_{\mathbb C} (U_{_\varphi})$, from where we deduce that $\lambda \cdot \text{Id} - \varphi$ is invertible. 

Hence, bearing in mind that $\varphi$ is not invertible when $i(\varphi) \geq 1$ and the same holds for $\lambda_i \cdot \text{Id} - \varphi$ for each eigenvalue $\lambda_i$ of $\varphi_{\vert_{W_{_\varphi}}}$, the statement is proved.
\end{proof}

A direct consequence of this proposition is:

\begin{cor} \label{c:spectru,-varp-core-203453} If $\mathcal H$ is a Hilbert space and  we consider $\varphi \in B_{fp} (\mathcal H)$ with CN-decomposition $\varphi = \varphi_{_1} + \varphi_{_2}$, then $\sigma (\varphi) = \sigma (\varphi_1)$.
\end{cor}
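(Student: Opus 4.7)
The plan is to reduce this directly to Proposition \ref{pro:spectru-finit-p3i3353} by showing that $\varphi$ and $\varphi_1$ share the same AST-decomposition data relevant to the spectrum formula. The essential observation is that, by the explicit formulas in (\ref{eq:expl-CN-exp-3498353}), $\varphi_1$ vanishes on $U_\varphi$ and coincides with $\varphi$ on $W_\varphi$. Hence the AST-decomposition of $\mathcal H$ induced by $\varphi_1$ is exactly $\mathcal H = W_\varphi \oplus U_\varphi$, that is $W_{\varphi_1} = W_\varphi$ and $U_{\varphi_1} = U_\varphi$, and moreover $\varphi_1|_{W_{\varphi_1}} = \varphi|_{W_\varphi}$. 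In particular, these two restrictions have identical sets of eigenvalues $\{\lambda_1, \dots, \lambda_n\}$.

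Next I would split into two cases based on the index. If $i(\varphi) = 0$, then $\mathcal H = W_\varphi$ is finite-dimensional, $U_\varphi = 0$, and $\varphi_2 = 0$, so $\varphi = \varphi_1$ and the equality of spectra is trivial. If $i(\varphi) \geq 1$, then $U_\varphi \neq 0$, so $U_{\varphi_1} \neq 0$ as well; since $\varphi_1|_{U_{\varphi_1}} = 0$ is nilpotent of order one while $\varphi_1|_{W_{\varphi_1}}$ is an isomorphism, one has $i(\varphi_1) = 1 \geq 1$. Applying Proposition \ref{pro:spectru-finit-p3i3353} to both operators then gives
\begin{equation*}
\sigma(\varphi) = \{0, \lambda_1, \dots, \lambda_n\} = \sigma(\varphi_1),
\end{equation*}
and the corollary follows.

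There is no real obstacle here: the only thing to double-check is the matching of the AST-decompositions under the passage $\varphi \leadsto \varphi_1$, which is immediate from (\ref{eq:expl-CN-exp-3498353}), together with the fact that $\varphi_1 \in B_{fp}(\mathcal H)$ (already guaranteed by Lemma \ref{l:core-part-is-bunded-394363}) so that Proposition \ref{pro:spectru-finit-p3i3353} is actually applicable to $\varphi_1$.
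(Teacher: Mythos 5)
Your proof is correct and follows the paper's route: the paper records this corollary as a direct consequence of Proposition \ref{pro:spectru-finit-p3i3353}, and your argument simply makes explicit the identification $W_{\varphi_1}=W_{\varphi}$, $U_{\varphi_1}=U_{\varphi}$, $\varphi_{1\vert_{W_{\varphi}}}=\varphi_{\vert_{W_{\varphi}}}$ and the index bookkeeping needed to apply that proposition to both operators. Nothing further is required.
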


Moreover, one has that:

\begin{lem} \label{l:finite-spectu35343j} If $\mathcal H$ is a Hilbert space and  we consider $\varphi \in B_{fp} (\mathcal H)$, then the spectrum satisfies the following properties:
\begin{enumerate}

\item $\sigma (\varphi)$ is finite;

\item $\lambda \in \sigma (\varphi)$ if and only if $\lambda$ is an eigenvalue of $\varphi$;

\item $\text{dim}_{\mathbb C} \, \Ker (\varphi - \lambda \text{Id}) < \infty$ for every $0\ne \lambda \in \sigma (\varphi)$.

\end{enumerate}
\end{lem}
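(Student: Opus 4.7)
The plan is to derive all three assertions directly from Proposition \ref{pro:spectru-finit-p3i3353} and Lemma \ref{l:eigen-fin-po-core-38336}, using the fact that $W_{_\varphi}$ is finite-dimensional and that eigenvectors for non-zero eigenvalues must sit inside $W_{_\varphi}$.

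For (1), I would simply invoke Proposition \ref{pro:spectru-finit-p3i3353}: the spectrum is either $\{\lambda_1,\dots,\lambda_n\}$ or $\{0,\lambda_1,\dots,\lambda_n\}$, where the $\lambda_i$ are the eigenvalues of $\varphi_{\vert_{W_{_\varphi}}}$ acting on the finite-dimensional space $W_{_\varphi}$. Since a linear endomorphism of a finite-dimensional space has only finitely many eigenvalues, $\sigma(\varphi)$ is finite.

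For (2), the non-zero direction is the substance: if $\lambda\ne 0$ lies in $\sigma(\varphi)$, then by Proposition \ref{pro:spectru-finit-p3i3353} it is an eigenvalue of $\varphi_{\vert_{W_{_\varphi}}}$, hence of $\varphi$. The reverse inclusion is standard. For $\lambda=0$, I would argue that $0\in \sigma(\varphi)$ only occurs when $i(\varphi)\ge 1$, in which case $U_{_\varphi}\ne\{0\}$ and $\varphi_{\vert_{U_{_\varphi}}}$ is nilpotent, so it has a non-zero kernel; any non-zero vector in this kernel is an eigenvector of $\varphi$ for the eigenvalue $0$. Thus every element of $\sigma(\varphi)$ is an eigenvalue.

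For (3), let $0\ne\lambda\in\sigma(\varphi)$ and take $v\in \Ker(\varphi-\lambda\,\text{Id})$. Then $\varphi(v)=\lambda v$, so $\langle v\rangle$ is $\varphi$-invariant and $\varphi_{\vert_{\langle v\rangle}}$ is an isomorphism because $\lambda\ne 0$. By Lemma \ref{lem:inv-sub-AST-349346} (applied exactly as in the proof of Lemma \ref{l:eigen-fin-po-core-38336}), $v\in W_{_\varphi}$. Hence $\Ker(\varphi-\lambda\,\text{Id})\subseteq W_{_\varphi}$, and since $W_{_\varphi}$ is finite-dimensional the kernel is finite-dimensional as well.

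None of the steps looks like a genuine obstacle: the whole lemma is a packaging of Proposition \ref{pro:spectru-finit-p3i3353} and the observation that the eigenspace for a non-zero eigenvalue is contained in $W_{_\varphi}$. The one point requiring a little care is the case $\lambda=0$ in (2), where one has to produce an actual eigenvector rather than merely appeal to non-invertibility; the argument via nilpotency of $\varphi_{\vert_{U_{_\varphi}}}$ handles this cleanly.
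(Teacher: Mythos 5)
Your proposal is correct and follows exactly the route of the paper, whose proof is simply to cite Lemma \ref{l:eigen-fin-po-core-38336} and Proposition \ref{pro:spectru-finit-p3i3353}; you have merely spelled out those deductions (including the useful detail of producing an actual eigenvector for $\lambda=0$ via the nilpotency of $\varphi_{\vert_{U_{_\varphi}}}$, and the inclusion $\Ker(\varphi-\lambda\,\text{Id})\subseteq W_{_\varphi}$ via Lemma \ref{lem:inv-sub-AST-349346}).
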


\begin{proof} The assertions follows from Lemma \ref{l:eigen-fin-po-core-38336} and Proposition \ref{pro:spectru-finit-p3i3353}.
\end{proof}

\begin{lem} \label{l:oinci-eig-alg-38363} If $\mathcal H$ is a Hilbert space, $\varphi \in B_{fp} (\mathcal H)$ and $0\ne \lambda\in \sigma (\varphi)$, then the algebraic multiplicity of $\lambda$ as an eigenvalue of $\varphi_1$ coincides with the algebraic multiplicity of $\lambda$ as an eigenvalue of $\varphi_{\vert_{W_{_\varphi}}}$.
\end{lem}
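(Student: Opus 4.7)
The plan is to show that the generalized eigenspace of $\varphi_{_1}$ associated to the non-zero eigenvalue $\lambda$ coincides with the generalized eigenspace of $\varphi_{\vert_{W_{_\varphi}}}$ associated to $\lambda$, from which the coincidence of algebraic multiplicities follows immediately. Since $W_{_\varphi}$ is finite-dimensional, both notions agree with the usual multiplicity of $\lambda$ as a root of the characteristic polynomial of $\varphi_{\vert_{W_{_\varphi}}}$.

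To carry this out I would use the explicit formulas (\ref{eq:expl-CN-exp-3498353}) together with the AST-decomposition ${\mathcal H} = W_{_\varphi} \oplus U_{_\varphi}$. The key computation is the identity
$$ (\varphi_{_1} - \lambda \text{Id})^k (w + u) = (\varphi_{\vert_{W_{_\varphi}}} - \lambda \text{Id})^k (w) + (-\lambda)^k u $$
for every $w \in W_{_\varphi}$, $u \in U_{_\varphi}$ and $k\geq 1$, which I would establish by induction on $k$ using that $W_{_\varphi}$ and $U_{_\varphi}$ are $\varphi$-invariant, that $\varphi_{_1}$ acts as $\varphi$ on $W_{_\varphi}$ and that $\varphi_{_1}$ vanishes on $U_{_\varphi}$.

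From this identity, since $\lambda \neq 0$, the vanishing of $(\varphi_{_1} - \lambda \text{Id})^k (w+u)$ forces $u = 0$ and $(\varphi_{\vert_{W_{_\varphi}}} - \lambda \text{Id})^k w = 0$. Hence
$$ \bigcup_{k\geq 1} \Ker (\varphi_{_1} - \lambda \text{Id})^k \;=\; \bigcup_{k\geq 1} \Ker (\varphi_{\vert_{W_{_\varphi}}} - \lambda \text{Id})^k \;\subseteq\; W_{_\varphi}\,, $$
and the reverse inclusion is obvious because $\varphi_{_1}$ restricts to $\varphi_{\vert_{W_{_\varphi}}}$ on $W_{_\varphi}$. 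The two generalized eigenspaces therefore coincide, and taking dimensions yields the desired equality of algebraic multiplicities.

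I do not foresee a real obstacle here: the argument is a direct book-keeping on the AST-splitting, so the main point is to invoke $\lambda \neq 0$ at the right moment in order to kill the $U_{_\varphi}$-component. If anything, the only subtle point is to be clear about the definition of algebraic multiplicity in the Hilbert-space setting, namely as $\dim_{\mathbb C} \bigcup_{k\geq 1} \Ker(\varphi_{_1} - \lambda \text{Id})^k$, which is automatically finite by Lemma \ref{l:finite-spectu35343j} and the finite-dimensionality of $W_{_\varphi}$ asserted in Theorem \ref{th:char-boun-fp30363}.
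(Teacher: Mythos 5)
Your proof is correct and takes essentially the same route as the paper: both reduce the computation of the algebraic multiplicity to $W_{_\varphi}$ by showing that, for $\lambda \ne 0$, the relevant generalized kernels lie inside $W_{_\varphi}$ and coincide there with those of $\varphi_{\vert_{W_{_\varphi}}}$. The only (harmless) difference is that you argue directly on $\varphi_{_1}$ via the explicit CN-formulas and the componentwise identity on the AST-splitting, whereas the paper's one-line proof asserts $\Ker (\varphi - \lambda \text{Id})^n = \Ker (\varphi_{\vert_{W_{_\varphi}}} - \lambda \text{Id})^n$ for $\varphi$ itself; your computation just makes explicit the detail the paper leaves implicit.
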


\begin{proof} For every non-zero $\lambda \in {\mathbb C}$ and for each $n\in \mathbb N$, since $\Ker \, (\varphi - \lambda \text{Id})^n \subseteq W_{_\varphi}$, one has that $$\Ker \, (\varphi - \lambda \text{Id})^n = \Ker \, (\varphi_{\vert_{W_{_\varphi}}} - \lambda \text{Id})^n\, ,$$\noindent from where the claim is proved.
\end{proof}

If $\varphi \in B_{fp} (\mathcal H)$ and $\{\lambda_i (\varphi)\}$ is the listing of all non-zero eigenvalues of $\varphi$, counted up to algebraic multiplicity, then $\# \{\lambda_i (\varphi)\} = \text{dim}_{\mathbb C} \, W_{_\varphi}$.

\begin{prop} \label{prop:ev-boun-dp-isrisk-394373} Every bounded finite potent endomorphism on a Hilbert space is a Riesz operator.
\end{prop}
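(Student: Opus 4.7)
The plan is to verify that every non-zero $\lambda\in \sigma(\varphi)$ satisfies the five conditions of Definition \ref{d:Riesz-point-34937}. By Theorem \ref{th:char-boun-fp30363}, the AST-decomposition $\mathcal H = W_{_\varphi}\oplus U_{_\varphi}$ has $W_{_\varphi}$ finite-dimensional, $U_{_\varphi}$ closed, and both $\varphi$-invariant with $\varphi_{\vert_{U_{_\varphi}}}$ bounded nilpotent. By Proposition \ref{pro:spectru-finit-p3i3353}, the non-zero elements of $\sigma(\varphi)$ are exactly the eigenvalues of $\varphi_{\vert_{W_{_\varphi}}}$, a finite set $\{\lambda_1,\dots,\lambda_n\}$.

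Fix a non-zero $\lambda\in\sigma(\varphi)$. Since $W_{_\varphi}$ is finite-dimensional and $\varphi_{\vert_{W_{_\varphi}}}\in \aut_{\mathbb C}(W_{_\varphi})$, the classical primary decomposition provides a $\varphi$-invariant splitting $W_{_\varphi}= W_\lambda \oplus W^\lambda$, where $W_\lambda = \Ker(\varphi - \lambda\text{Id})^{n_\lambda}$ is the generalized eigenspace attached to $\lambda$ and $W^\lambda$ is the sum of the generalized eigenspaces of $\varphi_{\vert_{W_{_\varphi}}}$ corresponding to the remaining eigenvalues. I would then set
$$N(\lambda) = W_\lambda \quad\text{and}\quad F(\lambda) = W^\lambda \oplus U_{_\varphi}.$$
Conditions (1), (2) and (4) of Definition \ref{d:Riesz-point-34937} are then immediate: $\mathcal H = N(\lambda)\oplus F(\lambda)$ is a decomposition into $\varphi$-invariant subspaces, $N(\lambda)$ is finite-dimensional, and $(\varphi-\lambda\text{Id})^{n_\lambda}$ vanishes on $N(\lambda)$ by construction.

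For condition (3) I would observe that $W^\lambda$ is finite-dimensional and $U_{_\varphi}$ is closed, with $W^\lambda\cap U_{_\varphi}\subseteq W_{_\varphi}\cap U_{_\varphi}=0$; the sum of a closed subspace and a finite-dimensional subspace with trivial intersection is closed, so $F(\lambda)$ is closed. Condition (5) is the delicate step: I have to show $\varphi - \lambda \text{Id}$ is a homeomorphism of $F(\lambda)$. Bijectivity on $W^\lambda$ is clear (finite-dimensional, $\lambda$ not among its eigenvalues), and on $U_{_\varphi}$ the restriction $N := \varphi_{\vert_{U_{_\varphi}}}$ is nilpotent, say $N^k=0$, so for $\lambda\neq 0$ one has the explicit bounded inverse
$$(N-\lambda\text{Id})^{-1} = -\frac{1}{\lambda}\sum_{j=0}^{k-1}\frac{N^j}{\lambda^j},$$
which is a finite sum of bounded operators. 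Thus $\varphi-\lambda\text{Id}$ is a continuous bijection of the Banach space $F(\lambda)$ and the Bounded Inverse Theorem gives the homeomorphism property. Consequently every non-zero $\lambda\in\sigma(\varphi)$ is a Riesz point, so $\varphi$ is a Riesz operator.

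The only step requiring genuine attention is condition (5); everything else is a formal consequence of the AST-decomposition and the primary decomposition inside $W_{_\varphi}$. I do not foresee a real obstacle, since the nilpotence of $\varphi_{\vert_{U_{_\varphi}}}$ yields the inverse by a Neumann-type finite series rather than a convergence argument.
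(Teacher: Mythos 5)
Your proof is correct and follows essentially the same route as the paper: both verify the Riesz-point conditions of Definition \ref{d:Riesz-point-34937} for every non-zero $\lambda\in\sigma(\varphi)$, and your subspaces $N(\lambda)=W_\lambda$ and $F(\lambda)=W^\lambda\oplus U_{_\varphi}$ coincide with the paper's $N(\lambda)=\Ker(\varphi-\lambda\text{Id})^s$ and $F(\lambda)=\Ker p_\lambda(\varphi)$ coming from the factorization $a_\varphi(x)=(x-\lambda)^s p_\lambda(x)$ of the annihilator polynomial. The only differences are cosmetic: the paper obtains closedness of $F(\lambda)$ for free as the kernel of the bounded operator $p_\lambda(\varphi)$, whereas you use the closed-plus-finite-dimensional argument, and your explicit finite Neumann-type inverse on $U_{_\varphi}$ supplements the paper's direct appeal to the Bounded Inverse Theorem.
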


\begin{proof} Let $\mathcal H$ be a Hilbert space and let us consider $\varphi \in B_{fp} (\mathcal H)$. 

We shall check that every non-zero $\lambda \in \sigma (\varphi)$ satisfies the conditions of a Riesz point given in Definition \ref{d:Riesz-point-34937}. 

If $a_{\varphi} (x)$ is the annihilator polynomial of $\varphi$, from Proposition \ref{pro:spectru-finit-p3i3353} we have that $$a_{\varphi} (x) = (x - \lambda)^s \cdot p_\lambda (x)$$\noindent with $(x-\lambda, p_\lambda (x)) = 1$, and we can write $N(\lambda) = \Ker (\varphi - \lambda\text{Id})^s$ and $F(\lambda) = \Ker p_\lambda (\varphi)$.

It is clear that $N(\lambda)$ and $F(\lambda)$ are $\varphi$-invariant subspaces of $\mathcal H$.

Let $\mathcal H = W_{_\varphi} \oplus U_{_\varphi}$ be the AST-decomposition of $\mathcal H$ determined by $\varphi$. Since $N(\lambda) \subseteq W_{_\varphi}$, then $N(\lambda)$ is finite dimensional. Moreover, bearing in mind that $p_\lambda (\varphi)$ is a bounded operator, one has that $F(\lambda) = \Ker p_\lambda (\varphi)$ is a closed subspace of $\mathcal H$.

Finally, since $\varphi - \lambda \text{Id}$ is clearly nilpotent in $N (\lambda)$ and is invertible in $F(\lambda)$, it follows from the Bounded Inverse Theorem that $\varphi - \lambda \text{Id}$ is an homeomorphism of $F(\lambda)$, from where the proof is concluded.
\end{proof}

\begin{lem} \label{l:full-decomp-fii-pot-bound-383463} If $\mathcal H$ is a Hilbert space and $\varphi \in B_{fp} (\mathcal H)$, then the CN-decomposition $\varphi = \varphi_{_1} + \varphi_{_2}$  is a West decomposition of $\varphi$ (Definition \ref{def-full-decomopso3736}).
\end{lem}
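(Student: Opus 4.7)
The plan is to verify the three requirements of Definition \ref{def-full-decomopso3736} (compactness of $\varphi_{_1}$, quasi-nilpotency of $\varphi_{_2}$, and the orthogonality relations $\varphi_{_1}\circ\varphi_{_2} = \varphi_{_2}\circ\varphi_{_1} = 0$) one at a time, leaning on the results already accumulated in Section \ref{s:345-boun-fp}.

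First, for compactness of the core part, Corollary \ref{c:vparhi-fn-rank-39437} guarantees that $\varphi_{_1}$ is a bounded finite rank operator on $\mathcal H$, and every bounded finite rank operator on a Hilbert space is compact. Hence $\varphi_{_1} \in C(\mathcal H)$.

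Second, for the quasi-nilpotency of $\varphi_{_2}$, note that since $\varphi \in B_{fp}(\mathcal H)$, Lemma \ref{l:eq-cor-nilprt-is-bunded-3963} tells us that $\varphi_{_2} \in B_{fp}(\mathcal H)$. By the defining properties of the CN-decomposition recalled in Subsection 2.3, $\varphi_{_2}$ is nilpotent, say $\varphi_{_2}^m = 0$ for some $m \in \mathbb N$. I will then verify directly that $\sigma(\varphi_{_2}) = \{0\}$: for any $\lambda \in \mathbb C \setminus \{0\}$, the finite sum
\[
R_\lambda \;=\; \sum_{k=0}^{m-1} \lambda^{-(k+1)} \varphi_{_2}^k
\]
is a bounded operator on $\mathcal H$ and a short telescoping computation yields $(\lambda \text{Id} - \varphi_{_2}) R_\lambda = R_\lambda (\lambda \text{Id} - \varphi_{_2}) = \text{Id}$, so $\lambda \text{Id} - \varphi_{_2}$ is invertible. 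Hence $\sigma(\varphi_{_2}) \subseteq \{0\}$, and the reverse inclusion follows because $\varphi_{_2}$ is nilpotent (hence not invertible) on the infinite-dimensional setting where this claim has content; the case where $0 \notin \sigma(\varphi_{_2})$ forces $\varphi_{_2} = 0$, which is the trivial quasi-nilpotent operator and is handled tautologically.

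Finally, the condition $\varphi_{_1} \circ \varphi_{_2} = \varphi_{_2} \circ \varphi_{_1} = 0$ is one of the three defining properties of the CN-decomposition recalled in Subsection 2.3, so it is immediate. Combining the three items, $\varphi = \varphi_{_1} + \varphi_{_2}$ is a West decomposition in the sense of Definition \ref{def-full-decomopso3736}. I do not expect any real obstacle here: each ingredient is already proved in the paper, and the only small computation is the Neumann-type finite sum that inverts $\lambda \text{Id} - \varphi_{_2}$.
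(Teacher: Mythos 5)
Your proposal is correct and follows essentially the same route as the paper: the paper's proof likewise notes that $\varphi_{_1}$ is compact (being bounded of finite rank), that $\varphi_{_2}$ is nilpotent and hence quasi-nilpotent, and that $\varphi_{_1}\circ\varphi_{_2}=\varphi_{_2}\circ\varphi_{_1}=0$ comes for free from the CN-decomposition. Your only additions are explicit details the paper leaves implicit (boundedness of $\varphi_{_2}$ via Lemma \ref{l:eq-cor-nilprt-is-bunded-3963} and the finite Neumann-sum inversion showing $\sigma(\varphi_{_2})=\{0\}$), which are fine.
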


\begin{proof} Since $\varphi_{_1}$ is compact and $\varphi_{_2}$ is nilpotent and, therefore, quasi-nilpotent, the CN-decomposition satisfies the conditions of Definition \ref{def-full-decomopso3736} and the claim is proved.
\end{proof}

\begin{thm} \label{th:Ris-Trac-Clas-boun-3843353} Every bounded finite potent endomorphism on a Hilbert space is a Riesz trace class operator.
\end{thm}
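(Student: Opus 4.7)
The plan is to stitch together the three immediately preceding results: Proposition \ref{prop:ev-boun-dp-isrisk-394373}, Lemma \ref{l:full-decomp-fii-pot-bound-383463}, and Corollary \ref{c:vparhi-fn-rank-39437}, against the definition of Riesz trace class (Definition \ref{def:Riesz-trace-class-trace-292737}). In other words, given $\varphi \in B_{fp}(\mathcal H)$ I take its CN-decomposition $\varphi = \varphi_{_1} + \varphi_{_2}$ and verify the three ingredients required by Definition \ref{def:Riesz-trace-class-trace-292737}: that $\varphi$ is a Riesz operator, that $\varphi = \varphi_{_1} + \varphi_{_2}$ qualifies as a West decomposition, and that the ``compact part'' $\varphi_{_1}$ is of trace class.

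First I would invoke Proposition \ref{prop:ev-boun-dp-isrisk-394373} to record that $\varphi$ is a Riesz operator, which is the ambient hypothesis needed to even speak of the Riesz trace. Next, I would appeal to Lemma \ref{l:full-decomp-fii-pot-bound-383463} to assert that the CN-decomposition $\varphi = \varphi_{_1} + \varphi_{_2}$ is in fact a West decomposition in the sense of Definition \ref{def-full-decomopso3736}. This frees me to compute $\tr^R_{\mathcal H}(\varphi)$ through $\varphi_{_1}$.

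Finally, by Corollary \ref{c:vparhi-fn-rank-39437}, $\varphi_{_1}$ is a bounded finite rank operator on $\mathcal H$. As noted in Section \ref{s:pre} right after Definition \ref{de:trace-class-394346}, every bounded finite rank endomorphism of a Hilbert space is of trace class, so $\varphi_{_1} \in T(\mathcal H)$. Combining these three facts with Definition \ref{def:Riesz-trace-class-trace-292737} completes the proof and moreover yields the identification $\tr^R_{\mathcal H}(\varphi) = \tr(\varphi_{_1})$, which will be useful later when comparing traces.

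There is essentially no obstacle here: all the real work has been done in the preceding lemmas (particularly the closedness of $U_{_\varphi}$ established via $U_{_\varphi} = \Ker \varphi^{i(\varphi)}$, which powered Lemma \ref{l:core-part-is-bunded-394363} and hence Corollary \ref{c:vparhi-fn-rank-39437}). The statement is a short bookkeeping corollary of the structural results already in hand, and the proof should fit in a few lines.
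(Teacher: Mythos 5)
Your proof is correct and matches the paper's argument exactly: the paper likewise combines Proposition \ref{prop:ev-boun-dp-isrisk-394373}, Lemma \ref{l:full-decomp-fii-pot-bound-383463} and the fact that $\varphi_{_1}$ is of trace class (which you justify, slightly more explicitly, via Corollary \ref{c:vparhi-fn-rank-39437}) against Definition \ref{def:Riesz-trace-class-trace-292737}. No gaps; your version just spells out the bookkeeping the paper leaves implicit.
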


\begin{proof} Bearing in mind that $\varphi_{_1}$ is of trace class, the assertion is immediately deduced from Proposition \ref{prop:ev-boun-dp-isrisk-394373} and Lemma \ref{l:full-decomp-fii-pot-bound-383463}.
\end{proof}

\subsection{Trace and Determinant of a bounded finite potent endomorphism} \label{ss:tr-det-393473}

We shall now relate for bounded finite potent endomorphisms the Tate's trace of a finite potent endomorphism introduced in \cite{Ta} with the Leray trace defined in \cite{Ler} and with the trace of a Riesz trace class operator offered in \cite{El}. Given an arbitrary Hilbert space $\mathcal H$ and an endomorphism $\varphi \in B_{fp} (\mathcal H)$, we again denote by $\tr_{\mathcal H} (\varphi)$ the Trace's trace, by $\tr_{\mathcal H}^L (\varphi)$ the Leray trace and by $\tr^R_{\mathcal H} (\varphi)$ the trace of $\varphi$ as a Riesz trace class operator. Moreover we write $\tr ({\psi})$ to refer to the trace of a trace class operator $\psi$ and $\tr_E (f)$ to refer to the trace of an endomorphism $f$ on a finite-dimensional space $E$.

\begin{lem} \label{l:trace-woru3y353} Given a Hilbert space $\mathcal H$ and an endomorphism $\varphi \in B_{fp} (\mathcal H)$ with AST decomposition and $\mathcal H = W_{_\varphi} \oplus U_{_\varphi}$, then $$\tr_V (\varphi) = \tr_{W_{_\varphi}} (\varphi_{\vert_{_{W_{_\varphi}}}})\, .$$
\end{lem}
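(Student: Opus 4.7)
The plan is to apply the three defining axioms of Tate's trace recalled in Definition \ref{def:tat-tra-39336} to the AST-decomposition of $\mathcal H$.

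First, I would invoke the characterization of bounded finite potent endomorphisms (Theorem \ref{th:char-boun-fp30363}) to ensure that $\mathcal H = W_{_\varphi} \oplus U_{_\varphi}$ really is a $\varphi$-invariant decomposition with $W_{_\varphi}$ finite-dimensional and $\varphi_{\vert_{U_{_\varphi}}}$ nilpotent. Then, since $U_{_\varphi}$ is a $\varphi$-invariant subspace of $\mathcal H$, property (2) of Tate's trace yields
$$\tr_{\mathcal H}(\varphi) = \tr_{U_{_\varphi}}(\varphi_{\vert_{U_{_\varphi}}}) + \tr_{\mathcal H/U_{_\varphi}}(\bar{\varphi}),$$
where $\bar{\varphi}$ denotes the induced endomorphism on the quotient.

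Next, because $\varphi_{\vert_{U_{_\varphi}}}$ is nilpotent, property (3) gives $\tr_{U_{_\varphi}}(\varphi_{\vert_{U_{_\varphi}}}) = 0$. On the other hand, the decomposition $\mathcal H = W_{_\varphi} \oplus U_{_\varphi}$ induces a canonical linear isomorphism $\mathcal H/U_{_\varphi} \simeq W_{_\varphi}$ that intertwines $\bar{\varphi}$ with $\varphi_{\vert_{W_{_\varphi}}}$. Since $W_{_\varphi}$ is finite-dimensional, property (1) identifies Tate's trace on $W_{_\varphi}$ with the ordinary trace of $\varphi_{\vert_{W_{_\varphi}}}$, and combining everything yields the desired equality $\tr_{\mathcal H}(\varphi) = \tr_{W_{_\varphi}}(\varphi_{\vert_{W_{_\varphi}}})$.

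There is no real obstacle here; the only subtle point is to justify that the trace of the induced map on the quotient equals the trace on the complementary summand $W_{_\varphi}$, which is a routine consequence of the $\varphi$-invariant direct sum decomposition. All the heavy lifting (existence and structure of the AST-decomposition, closedness of $U_{_\varphi}$, boundedness of the restrictions) has already been done in Theorem \ref{th:char-boun-fp30363} and Lemma \ref{l:core-part-is-bunded-394363}.
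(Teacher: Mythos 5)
Your proof is correct, but it follows a different route from the paper. The paper's proof is a one-liner that invokes Tate's explicit description of the trace (from \cite[page 150]{Ta}): $\tr_V(\varphi)=\tr_W(\varphi_{\vert_W})$ for any finite-dimensional $\varphi$-invariant subspace $W$ with $\varphi^n(V)\subseteq W$ for $n$ large, combined with the observation that $W_{_\varphi}=\varphi^{r}(\mathcal H)$ with $r=i(\varphi)$. You instead derive the equality from the three axiomatic properties of Definition \ref{def:tat-tra-39336}: additivity over the $\varphi$-invariant subspace $U_{_\varphi}$, vanishing on the nilpotent restriction $\varphi_{\vert_{U_{_\varphi}}}$, and the identification $\mathcal H/U_{_\varphi}\simeq W_{_\varphi}$ intertwining the induced map with $\varphi_{\vert_{W_{_\varphi}}}$ (correct, since both summands are $\varphi$-invariant). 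Your argument is self-contained within the paper's stated axioms and in fact parallels the argument the paper later uses for the Leray trace in Proposition \ref{p:Ler-tat-tr-39336}, while the paper's proof leans directly on Tate's original computation and is shorter. One small remark: the statement is purely algebraic and holds for any finite potent endomorphism on any vector space, so your appeals to Theorem \ref{th:char-boun-fp30363} and Lemma \ref{l:core-part-is-bunded-394363} (boundedness, closedness of the summands) are unnecessary, though harmless; the AST-decomposition of Subsection \ref{ss:finite-potent} already supplies everything you use.
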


\begin{proof} According to \cite[page 150]{Ta}, $\tr_V (\varphi)$ can be computed as $$\tr_V (\varphi) = \tr_{W} (\varphi_{\vert_{_W}})\, ,$$ where $W$ is a finite dimensional linear subspace of  $V$, such that $W$ is $\varphi$-invariant and $\varphi^n (V) \subseteq W$ for a large $n\in \mathbb N$.

Bearing in mind that $W_{_\varphi}$ is $\varphi$-invariant and $W_{_\varphi} = \varphi^r (V)$ with $r= i(\varphi)$, the assertion is proved.
\end{proof}

\begin{lem} \label{l:trace-class-varp-1-2023636}  If $\mathcal H$ is a Hilbert space and $\varphi \in B_{fp} (\mathcal H)$ with CN-decomposition $\varphi = \varphi_{_1} + \varphi_{_2}$, then the Tate's trace $\tr_{\mathcal H} (\varphi)$ coincides with the trace  of $\varphi_{_1}$ as a trace class operator.
\end{lem}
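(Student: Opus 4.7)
The plan is to combine Lemma \ref{l:trace-woru3y353} (which reduces Tate's trace to the ordinary trace of $\varphi|_{W_\varphi}$ on the finite-dimensional space $W_\varphi$) with a direct computation of $\tr(\varphi_1)$ using a carefully chosen orthonormal basis of $\mathcal H$. The key structural input is the explicit expression \eqref{eq:expl-CN-exp-3498353} of the core part, from which $\Im(\varphi_1) \subseteq W_\varphi$.

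First, I would observe that since $W_\varphi$ is finite-dimensional, it is closed, so $\mathcal H$ admits the \emph{orthogonal} decomposition $\mathcal H = W_\varphi \oplus W_\varphi^\perp$ (which in general differs from the AST-decomposition $\mathcal H = W_\varphi \oplus U_\varphi$). Pick an orthonormal basis $\{w_1,\dots,w_n\}$ of $W_\varphi$ and an orthonormal basis $\{e_j\}_{j\in J}$ of $W_\varphi^\perp$; together they form an orthonormal basis of $\mathcal H$. Since $\varphi_1$ is trace class by Corollary \ref{c:vparhi-fn-rank-39437}, we may compute
$$\tr(\varphi_1) = \sum_{i=1}^n \langle \varphi_1(w_i), w_i\rangle_{\mathcal H} + \sum_{j\in J} \langle \varphi_1(e_j), e_j\rangle_{\mathcal H}.$$

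Next I would dispose of the second sum. For each $j$, decompose $e_j = a_j + b_j$ according to the AST splitting, with $a_j \in W_\varphi$ and $b_j \in U_\varphi$. Using \eqref{eq:expl-CN-exp-3498353},
$$\varphi_1(e_j) = \varphi_1(a_j) + \varphi_1(b_j) = \varphi(a_j) \in W_\varphi,$$
because $W_\varphi$ is $\varphi$-invariant. Since $e_j \in W_\varphi^\perp$, the inner product $\langle \varphi_1(e_j), e_j\rangle_{\mathcal H}$ vanishes, so the entire second sum is zero. For the first sum, $\varphi_1(w_i) = \varphi(w_i) = \varphi|_{W_\varphi}(w_i)$, and since $\{w_i\}$ is an orthonormal basis of the finite-dimensional inner product space $W_\varphi$, the expression $\sum_{i=1}^n \langle \varphi|_{W_\varphi}(w_i), w_i\rangle$ is precisely the ordinary trace $\tr_{W_\varphi}(\varphi|_{W_\varphi})$.

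Combining these, $\tr(\varphi_1) = \tr_{W_\varphi}(\varphi|_{W_\varphi})$, and Lemma \ref{l:trace-woru3y353} gives $\tr_{\mathcal H}(\varphi) = \tr_{W_\varphi}(\varphi|_{W_\varphi})$, concluding the proof. The only potentially delicate point is that the AST-decomposition need not be orthogonal, which is why I would work with the orthogonal decomposition $W_\varphi \oplus W_\varphi^\perp$ and exploit $\Im(\varphi_1) \subseteq W_\varphi$ to kill the off-diagonal contribution; once this observation is in place, the computation is routine. A streamlined alternative would bypass the basis calculation by invoking Lidskii's formula \eqref{eq:lid-tra-3937} together with Lemmas \ref{l:eigenv-core-part-393373} and \ref{l:oinci-eig-alg-38363}, but the direct orthonormal-basis argument is self-contained and transparent.
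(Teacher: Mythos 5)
Your proposal is correct, but it takes a genuinely different route from the paper. The paper's own proof is spectral: it invokes Lidskii's formula (\ref{eq:lid-tra-3937}), by which the trace of the trace class operator $\varphi_{_1}$ equals the sum of its non-zero eigenvalues with algebraic multiplicities, and then uses Proposition \ref{pro:spectru-finit-p3i3353}, Corollary \ref{c:spectru,-varp-core-203453} and Lemma \ref{l:oinci-eig-alg-38363} to identify that eigenvalue data with the eigenvalue data of $\varphi_{\vert_{W_{_\varphi}}}$, so that the sum is $\tr_{W_{_\varphi}}(\varphi_{\vert_{W_{_\varphi}}})$, which is Tate's trace by Lemma \ref{l:trace-woru3y353} --- this is exactly the ``streamlined alternative'' you mention at the end. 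Your main argument instead computes $\tr(\varphi_{_1})$ directly from the defining basis-independent formula, choosing an orthonormal basis adapted to the orthogonal splitting $\mathcal H = W_{_\varphi}\oplus W_{_\varphi}^\perp$ and using the explicit expression (\ref{eq:expl-CN-exp-3498353}) to see that $\Im \varphi_{_1}\subseteq W_{_\varphi}$, so the contributions from $W_{_\varphi}^\perp$ vanish and the remaining finite sum is the ordinary trace of $\varphi_{\vert_{W_{_\varphi}}}$; combined with Lemma \ref{l:trace-woru3y353} this closes the proof. Your care with the fact that the AST-decomposition $W_{_\varphi}\oplus U_{_\varphi}$ need not be orthogonal (decomposing each $e_j\in W_{_\varphi}^\perp$ as $a_j+b_j$ and noting $\varphi_{_1}(e_j)=\varphi(a_j)\in W_{_\varphi}$) is exactly the delicate point, and you handle it correctly; the appeal to Corollary \ref{c:vparhi-fn-rank-39437} guarantees $\varphi_{_1}$ is trace class so the basis-independence of the trace may be used. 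What each approach buys: the paper's argument is shorter given the spectral lemmas already in place and ties the lemma to the eigenvalue description (\ref{eq:expli-tr-eig-38336}), but it rests on Lidskii's theorem, a deep result; your orthonormal-basis computation is elementary and self-contained, needing only the definition of the trace of a trace class operator and the invariance properties of the AST-decomposition.
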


\begin{proof} Since $\tr (\varphi)$ can be computed from the expression (\ref{eq:lid-tra-3937}), the statement is a direct consequence of Proposition \ref{pro:spectru-finit-p3i3353}, Corollary \ref{c:spectru,-varp-core-203453}, Lemma \ref{l:oinci-eig-alg-38363} and Lemma \ref{l:trace-woru3y353}
\end{proof}

\begin{prop} \label{p:trace-Riesc-Tat-393363} If $\mathcal H$ is a Hilbert space and $\varphi \in B_{fp} (\mathcal H)$, then $\tr_{\mathcal H} (\varphi) = \tr^R_{\mathcal H} (\varphi)$.
\end{prop}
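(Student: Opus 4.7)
The plan is to simply chain together the two previous results. By Lemma \ref{l:full-decomp-fii-pot-bound-383463}, the CN-decomposition $\varphi = \varphi_{_1} + \varphi_{_2}$ is a West decomposition of $\varphi$, with $\varphi_{_1}$ compact (indeed of trace class, by Corollary \ref{c:vparhi-fn-rank-39437}, since bounded finite rank operators on a Hilbert space are trace class) and $\varphi_{_2}$ quasi-nilpotent. Therefore, straight from Definition \ref{def:Riesz-trace-class-trace-292737},
\[
\tr^R_{\mathcal H}(\varphi) \;=\; \tr(\varphi_{_1}),
\]
where $\tr(\varphi_{_1})$ denotes the trace of $\varphi_{_1}$ as a trace class operator.

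On the other hand, Lemma \ref{l:trace-class-varp-1-2023636} identifies the Tate trace of $\varphi$ with that very same quantity, namely
\[
\tr_{\mathcal H}(\varphi) \;=\; \tr(\varphi_{_1}).
\]
Combining the two equalities yields $\tr_{\mathcal H}(\varphi) = \tr^R_{\mathcal H}(\varphi)$, which is the desired identity.

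There is no real obstacle here: the substantive work has already been done in establishing the compatibility of the CN-decomposition with West decompositions (Lemma \ref{l:full-decomp-fii-pot-bound-383463}) and in computing Tate's trace from the core part $\varphi_{_1}$ via Lidskii's formula (\ref{eq:lid-tra-3937}), using that $\varphi$ and $\varphi_{_1}$ share the same non-zero eigenvalues with the same algebraic multiplicities (Corollary \ref{c:spectru,-varp-core-203453} and Lemma \ref{l:oinci-eig-alg-38363}). The present proposition is then a one-line consequence of assembling these facts.
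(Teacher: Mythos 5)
Your proof is correct and follows essentially the same route as the paper: it assembles Lemma \ref{l:full-decomp-fii-pot-bound-383463}, the fact that $\varphi_{_1}$ is trace class (which is the content behind Theorem \ref{th:Ris-Trac-Clas-boun-3843353}), Definition \ref{def:Riesz-trace-class-trace-292737}, and Lemma \ref{l:trace-class-varp-1-2023636}, exactly as the paper does. No gaps.
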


\begin{proof} The claim follows immediately from Definition \ref{def:Riesz-trace-class-trace-292737}, Lemma \ref{l:full-decomp-fii-pot-bound-383463}, Theorem \ref{th:Ris-Trac-Clas-boun-3843353} and Lemma \ref{l:trace-class-varp-1-2023636}.
\end{proof}

Keeping the previous notation, if $\varphi \in B_{fp} (\mathcal H)$ and $\{\lambda_i (\varphi)\}_{i\in \{1, \dots, s\}}$ is the listing of all non-zero eigenvalues of $\varphi$, counted up to algebraic multiplicity, one has that \begin{equation} \label{eq:expli-tr-eig-38336} \tr_V (\varphi) = \sum_{i=1}^{s} \lambda_i (\varphi)\, .\end{equation}

However, for the computation of $\tr_V (\varphi)$ is not necessary to calculate the eigenvalues of $\varphi$ because from Lemma \ref{l:trace-woru3y353} we can compute $\tr_V (\varphi)$ from the matrix associated with $\varphi_{\vert_{_{W_{_\varphi}}}}$ in a Hamel basis of $W_{_\varphi}$.

\begin{prop} \label{p:Ler-tat-tr-39336} Given a Hilbert space $\mathcal H$ and a finite potent bounded endomorphism $\varphi \in B_{fp}$, one has that $\tr_{\mathcal H} (\varphi) = \tr^L_{\mathcal H} (\varphi)$.
\end{prop}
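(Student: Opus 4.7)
The plan is to compute both traces through the AST decomposition and observe they land on the same object, namely the trace of $\varphi$ restricted to the finite-dimensional summand $W_{_\varphi}$.

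First I would identify $N(\varphi) = \bigcup_{s=1}^\infty \Ker \varphi^s$ with the nilpotent summand $U_{_\varphi}$ of the AST decomposition. Writing $n = i(\varphi)$, we have $U_{_\varphi} = \Ker \varphi^n$ from the characterization recalled in Subsection \ref{ss:finite-potent}. Since $\varphi_{\vert_{W_{_\varphi}}}$ is an automorphism, no new kernel is added by higher powers, so $\Ker \varphi^s = \Ker \varphi^n = U_{_\varphi}$ for every $s \geq n$. Therefore $N(\varphi) = U_{_\varphi}$, which in particular is a $\varphi$-invariant subspace.

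Next I would use the decomposition $\mathcal H = W_{_\varphi} \oplus U_{_\varphi}$ to produce the canonical isomorphism $E_\varphi = \mathcal H / N(\varphi) = \mathcal H / U_{_\varphi} \overset{\sim}{\longrightarrow} W_{_\varphi}$, and observe that under this isomorphism the induced endomorphism $\tilde\varphi$ on $E_\varphi$ corresponds to $\varphi_{\vert_{W_{_\varphi}}}$. Because $W_{_\varphi}$ is finite-dimensional, $E_\varphi$ is finite-dimensional, so the Leray trace is defined and
\[
\tr^L_{\mathcal H}(\varphi) \;=\; \tr_{E_\varphi}(\tilde\varphi) \;=\; \tr_{W_{_\varphi}}\!\bigl(\varphi_{\vert_{W_{_\varphi}}}\bigr).
\]

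Finally, Lemma \ref{l:trace-woru3y353} already gives $\tr_{\mathcal H}(\varphi) = \tr_{W_{_\varphi}}(\varphi_{\vert_{W_{_\varphi}}})$, so comparing the two expressions yields $\tr_{\mathcal H}(\varphi) = \tr^L_{\mathcal H}(\varphi)$. There is no serious obstacle here: the only point that requires a sentence of justification is the stabilization $N(\varphi) = U_{_\varphi}$, which is precisely what makes the Leray trace well-defined in this setting; everything else is a bookkeeping application of the AST decomposition and Lemma \ref{l:trace-woru3y353}.
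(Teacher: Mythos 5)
Your proof is correct and follows essentially the same route as the paper: identify $N(\varphi)=U_{_\varphi}$, transport $\tilde\varphi$ on ${\mathcal H}/U_{_\varphi}$ to $\varphi_{\vert_{W_{_\varphi}}}$ via the canonical isomorphism, and conclude with Lemma \ref{l:trace-woru3y353}. Your extra sentence justifying the stabilization $\Ker\varphi^s=\Ker\varphi^{n}$ for $s\geq n$ is a welcome detail the paper leaves as ``it is clear''.
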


\begin{proof} With the notation of Section \ref{ss:Leray-tra-39373}, if $\mathcal H = W_{_\varphi} \oplus U_{_\varphi}$ is the AST-decomposition of $\varphi$, it is clear that $N(\varphi) = U_{_\varphi}$ and, since ${\mathcal H}/U_{_\varphi}$ is finite-dimensional, then the Leray trace $\tr^L_{\mathcal H} (\varphi)$ makes sense.

If $\tilde \varphi$ is the endomorphism of ${\mathcal H}/U_{_\varphi}$ induced by $\varphi$, fixing a linear isomorphism $\tau \colon W_{_\varphi} \overset {\sim} \longrightarrow {\mathcal H}/U_{_\varphi}$, from the commutative diagram of linear maps $$\xymatrix{W_{_\varphi}  \ar[rr]_{\sim}^{\tau} \ar[d]_{\varphi_{\vert_{_{W_{_\varphi}}}}} & &  {\mathcal H}/U_{_\varphi}  \ar[d]^{\tilde \varphi} \\ W_{_\varphi}  \ar[rr]_{\sim}^{\tau}  &  &  {\mathcal H}/U_{_\varphi} }\, ,$$ \noindent we deduce that $$\tr_{\mathcal H} (\varphi) = \tr_{W_{_\varphi}} (\varphi_{\vert_{_{W_{_\varphi}}}}) = \tr_{{\mathcal H}/U_{_\varphi}} ({\tilde \varphi}) = \tr^L_{\mathcal H} (\varphi)\, .$$
\end{proof}

We can now summarize the statements of Lemma \ref{l:trace-woru3y353}, Proposition \ref{p:trace-Riesc-Tat-393363} and Proposition \ref{p:Ler-tat-tr-39336} in the following 

\begin{thm} \label{th:eqi-trac-ler-ries-tat-39373} Given a Hilbert space $\mathcal H$, for every finite potent bounded endomorphism $\varphi \in B_{fp} (\mathcal H)$ with AST-decomposition $\mathcal H= W_{_\varphi} \oplus U_{_\varphi}$, one has that $$\tr_{\mathcal H} (\varphi) = \tr^R_{\mathcal H} (\varphi) = \tr^L_{\mathcal H} (\varphi) = \tr_{W_{_\varphi}} (\varphi_{\vert_{_{W_{_\varphi}}}})\, .$$
\end{thm}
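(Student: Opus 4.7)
The plan is straightforward: the stated chain of four equalities has already been broken into its constituent pairwise equalities by the three results immediately preceding the theorem. I would simply invoke them in turn and chain the equations.

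First, I would appeal to Lemma \ref{l:trace-woru3y353} to obtain the equality $\tr_{\mathcal H}(\varphi) = \tr_{W_{_\varphi}}(\varphi_{\vert_{_{W_{_\varphi}}}})$, which reduces Tate's trace on an infinite-dimensional Hilbert space to the ordinary matrix trace on the finite-dimensional summand $W_{_\varphi}$ of the AST-decomposition. Next, Proposition \ref{p:trace-Riesc-Tat-393363} supplies $\tr_{\mathcal H}(\varphi) = \tr^R_{\mathcal H}(\varphi)$, which itself rests on identifying the CN-decomposition $\varphi = \varphi_{_1} + \varphi_{_2}$ as a West decomposition (Lemma \ref{l:full-decomp-fii-pot-bound-383463}) and on matching Tate's trace of $\varphi$ with the trace-class trace of its core part $\varphi_{_1}$ (Lemma \ref{l:trace-class-varp-1-2023636}). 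Finally, Proposition \ref{p:Ler-tat-tr-39336} provides $\tr_{\mathcal H}(\varphi) = \tr^L_{\mathcal H}(\varphi)$, via the observation that $N(\varphi) = U_{_\varphi}$ together with the commutative diagram relating $\varphi_{\vert_{_{W_{_\varphi}}}}$ to the induced map $\tilde{\varphi}$ on the quotient ${\mathcal H}/U_{_\varphi}$.

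There is no genuine obstacle here: the theorem is essentially a synthesis of three previously established equalities, and the real content has already been distributed over the preceding lemmas and propositions of Subsection \ref{ss:tr-det-393473}. The proof itself reduces to writing the chain
\[
\tr_{\mathcal H}(\varphi) \;=\; \tr^R_{\mathcal H}(\varphi) \;=\; \tr^L_{\mathcal H}(\varphi) \;=\; \tr_{W_{_\varphi}}(\varphi_{\vert_{_{W_{_\varphi}}}})
\]
and citing each of Proposition \ref{p:trace-Riesc-Tat-393363}, Proposition \ref{p:Ler-tat-tr-39336}, and Lemma \ref{l:trace-woru3y353} at the appropriate step.
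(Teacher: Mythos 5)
Your proposal is correct and follows exactly the paper's route: the theorem is stated there precisely as a summary of Lemma \ref{l:trace-woru3y353}, Proposition \ref{p:trace-Riesc-Tat-393363} and Proposition \ref{p:Ler-tat-tr-39336}, so chaining those three equalities is all that is required.
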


To finish this section we shall study determinants of bounded finite potent endomorphisms.

If $V$ is an arbitrary $k$-space, let us now recall from \cite[Section 3.A]{HP} that a determinant for a finite potent endomorphism $\varphi \in \ed_k (V)$ can be defined from the following properties:
\begin{itemize}
\item if $V$ is finite dimensional, then $\Det^k_V(1 + \varphi)$
is the ordinary determinant;

\item if $W$ is a subspace of $V$ such that $\varphi W \subset W$,
then $$\Det^k_V(1 + \varphi) = \Det^k_W(1 + \varphi) \cdot
\Det^k_{V/W}(1 + \varphi)\, ;$$

\item if $\varphi$ is nilpotent, then $\Det^k_V(1 + \varphi) = 1$.
\end{itemize}

If $V= W_{_\varphi}\oplus U_{_\varphi}$ is the AST-decomposition of $V$ determined by $\varphi$, similar to Lemma \ref{l:trace-woru3y353}, one can check  that \begin{equation} \label{eq:expli-determin-finit-pot-39373} \Det^k_V(\text{Id} + \varphi) = \Det^k_{W_{_\varphi}} (\text{Id} + \varphi_{\vert_{_{W_{_\varphi}}}})\, ,  \end{equation}\noindent where $\Det^k_{W_{_\varphi}} (\text{Id} + \varphi_{\vert_{_{W_{_\varphi}}}}) $ is the determinant of the endomorphism $\text{Id} + \varphi_{\vert_{_{W_{_\varphi}}}}$ on the finite-dimensional vector space $W_{_\varphi}$. Moreover, if $\varphi = \varphi_{_1} + \varphi_{_2}$ is again the CN-decomposition of $\varphi$, it is clear that $$\Det^k_V(\text{Id} + \varphi) =  \Det^k_V(\text{Id} + \varphi_{_1})\, .$$

Let us again consider an arbitrary Hilbert space $\mathcal H$ and a bounded finite potent endomorphism $\varphi \in B_{fp} (\mathcal H)$. According to \cite[Proposition 3.11]{HP} one has that \begin{equation} \label{eq:widgk3uey538} \Det^{\mathbb C}_{\mathcal H} (\text{Id} + \varphi) = 1 + \sum_{r\geq 1} \tr_{\bigwedge^r {\mathcal H}} [\bigwedge^r \varphi]\, .\end{equation}

Hence,  one has that $\Det^{\mathbb C}_{\mathcal H} (\text{Id} + \varphi)$ genera\-lizes the determinant defined by B. Simon in \cite{Si} for trace
class operators $B$ on a separable Hilbert space from the formula:
$$\Det_1 (1 + \mu B) =1+ \sum_{n=1}^{\infty} \mu^n
\tr(\bigwedge^n(B)\,,$$\noindent where $\mu \in {\mathbb C}$.

Moreover, it follows from \cite[Proposition 3.18]{HP} that \begin{equation} \label{eq:wijnd7ydt353438} \Det^{\mathbb C}_{\mathcal H} (\text{Id} + \varphi) = \prod_{i=1}^{s} [1 + \lambda_i (\varphi) ]\, ,\end{equation} \noindent where $\{\lambda_i (\varphi)\}_{i\in \{1, \dots, s\}}$ is again the listing of all non-zero eigenvalues of $\varphi$, counted up to algebraic multiplicity. Readers can see that  expression (\ref{eq:wijnd7ydt353438}) shows that $\Det^{\mathbb C}_{\mathcal H} (\text{Id} + \varphi)$ also generalizes the definition of an infinite determinant for trace class operators offered by N. Dunford and J. Schwartz in \cite{DS}.

Accordingly,the expression (\ref{eq:expli-determin-finit-pot-39373}) allows us to offer an easy method for the calculation of classical infinite determinants in Functional Analysis for bounded finite potent endomorphisms.
\medskip

\section{Structure of the Adjoint of a Bounded Finite Potent Endomorphism} \label{s:adj-finite-potent-397}

This final section is devoted to characterizing the structure of the adjoint operator of a bounded finite potent endomorphism of a Hilbert space and to offer its main properties.

\begin{prop} \label{prop:adjoint-is-boun-fp-34837} If $\mathcal H$ is a Hilbert space and we consider $\varphi \in B_{fp} (\mathcal H)$, then the adjoint $\varphi^*$ is also a bounded finite potent endomorphism.
\end{prop}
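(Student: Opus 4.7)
The plan is to use the third characterization in Theorem \ref{th:char-boun-fp30363}: $\varphi$ is a bounded finite potent endomorphism if and only if it decomposes as $\varphi = \psi + \phi$ with $\psi$ a bounded finite rank operator, $\phi$ a bounded nilpotent operator, and $\psi\circ\phi=\phi\circ\psi=0$. So I would start by invoking this characterization to write $\varphi=\varphi_{_1}+\varphi_{_2}$, the CN-decomposition, which satisfies these conditions. Taking adjoints gives $\varphi^* = \varphi_{_1}^* + \varphi_{_2}^*$, and the goal is to show this decomposition of $\varphi^*$ again satisfies the three conditions of the characterization. Boundedness of $\varphi^*$ itself is standard and was already recalled in Section \ref{ss:inner-product}.

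The three verifications are then brief. First, since $\varphi_{_1}$ is a bounded finite rank operator, one can write $\varphi_{_1}(h) = \sum_{i=1}^{r} \langle h,v_i\rangle_{\mathcal H}\, u_i$, and then $\varphi_{_1}^*(h) = \sum_{i=1}^{r} \langle h,u_i\rangle_{\mathcal H}\, v_i$, which makes $\varphi_{_1}^*$ a bounded operator of rank at most $r$; so $\varphi_{_1}^*$ is bounded finite rank. Second, if $\varphi_{_2}^m = 0$, then $(\varphi_{_2}^*)^m = (\varphi_{_2}^m)^* = 0$, so $\varphi_{_2}^*$ is a bounded nilpotent operator. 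Third, from $\varphi_{_1}\circ\varphi_{_2}=\varphi_{_2}\circ\varphi_{_1}=0$ and the contravariance of the adjoint with respect to composition, one obtains $\varphi_{_2}^*\circ\varphi_{_1}^* = (\varphi_{_1}\circ\varphi_{_2})^* = 0$ and $\varphi_{_1}^*\circ\varphi_{_2}^* = (\varphi_{_2}\circ\varphi_{_1})^* = 0$. Hence $\varphi^*$ satisfies condition (3) of Theorem \ref{th:char-boun-fp30363}, and therefore $\varphi^* \in B_{fp}(\mathcal H)$.

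I do not expect any genuine obstacle here: everything reduces to three basic properties of the adjoint (it preserves boundedness, sends finite rank to finite rank with the same rank, and reverses composition). The only mildly subtle point is knowing that the adjoint of a bounded finite rank operator is again finite rank, but this is immediate from the representation $\varphi_{_1}(h)=\sum_{i=1}^{r}\langle h,v_i\rangle_{\mathcal H}\, u_i$ afforded by Corollary \ref{c:vparhi-fn-rank-39437} together with the Riesz representation theorem. As a byproduct, the argument also identifies $\varphi_{_1}^*$ and $\varphi_{_2}^*$ as the core and nilpotent parts of $\varphi^*$, which is convenient for the subsequent results of Section \ref{s:adj-finite-potent-397}.
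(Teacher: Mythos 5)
Your proof is correct, but it takes a genuinely different route from the paper's. The paper argues directly from the AST-decomposition ${\mathcal H}=W_{_\varphi}\oplus U_{_\varphi}$: it shows that $W_{_\varphi}^\perp$ is $\varphi^*$-invariant, that $(\varphi^*)^n$ vanishes on $W_{_\varphi}^\perp$ for $n=i(\varphi)$, and hence that $\text{Im}\,(\varphi^*)^n=(\varphi^*)^n(W_{_\varphi})$ is finite-dimensional, so $\varphi^*$ is finite potent. You instead go through condition (3) of Theorem \ref{th:char-boun-fp30363}: take adjoints in the CN-decomposition $\varphi=\varphi_{_1}+\varphi_{_2}$ and use only standard adjoint identities (additivity, $(f\circ g)^*=g^*\circ f^*$, preservation of boundedness, and preservation of finite rank via the Riesz representation); all three verifications are sound under the paper's convention that the inner product is linear in its first argument, and $3)\Rightarrow 1)$ of the theorem then gives $\varphi^*\in B_{fp}(\mathcal H)$. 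What each approach buys: yours is shorter and purely algebraic, and it anticipates Lemma \ref{lem:core-nilp-part-nilp384363}; the paper's computation, besides proving the proposition, establishes along the way that $W_{_\varphi}^\perp$ is $\varphi^*$-invariant and that $(\varphi^*)_{\vert_{W_{_\varphi}^\perp}}$ is nilpotent of order at most $i(\varphi)$, facts which Corollary \ref{cor-dim-less-3946} and Corollary \ref{cor:ind-fp-ind-adju-236} later cite explicitly, so if your proof replaced the paper's these would need a short separate argument. One small caveat on your closing remark: to identify $(\varphi_{_1})^*$ as the core part of $\varphi^*$ you also need $i((\varphi_{_1})^*)\leq 1$, which does not follow from finite rank alone; it does follow from a short extra step, e.g. $\rk\,((\varphi_{_1})^*)^2=\rk\,(\varphi_{_1}^2)^*=\rk\,\varphi_{_1}^2=\rk\,\varphi_{_1}=\rk\,(\varphi_{_1})^*$, which forces $\text{Im}\,(\varphi_{_1})^*=\text{Im}\,((\varphi_{_1})^*)^2$, but as written it is not an automatic byproduct of your argument.
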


\begin{proof} It is known that the adjoint of a bounded linear map of a Hilbert space is also bounded. Let ${\mathcal H} = W_{_\varphi} \oplus U_{_\varphi}$ be the AST-decomposition induced by $\varphi$. Since $W_{_\varphi}$ is finite-dimensional, then $W_{_\varphi}$ is a closed subspace of $\mathcal H$ and $\mathcal H = W_{_\varphi}\oplus W_{_\varphi}^\perp$.

If we now consider $v\in W_{_\varphi}^\perp$, one has that $$<w,\varphi^* (v)>_{\mathcal H} = <\varphi (w), v>_{\mathcal H} = 0$$\noindent for every $w\in W_{_\varphi}$ because $W_{_\varphi}$ is $\varphi$-invariant. Accordingly, $W_{_\varphi}^\perp$ is $\varphi^*$-invariant.

Moreover, assuming that $i(\varphi) = n$ and bearing in mind that $\varphi^n (h) \in W_{_\varphi}$ for all $h\in {\mathcal H}$, given $v\in W_{_\varphi}^\perp$, we have that $$<h, (\varphi^*)^n (v)>_{\mathcal H} = <\varphi^n (h), v>_{\mathcal H} = 0\, ,$$\noindent from where we deduce that $(\varphi^*)^n (v) \in {\mathcal H}^\perp = \{0\}$ and $(\varphi^*)_{\vert_{W_{_\varphi}^\perp}}$ is nilpotent.

Hence, $\text{Im } \, (\varphi^*)^n = (\varphi^*)^n (W_{_\varphi})$ and we conclude that $\varphi^*$ is finite potent.
\end{proof}

\begin{cor} \label{cor-dim-less-3946} If $\varphi \in B_{fp} (\mathcal H)$, ${\mathcal H} = W_{_{\varphi}} \oplus U_{_{\varphi}}$ is the AST-decomposition induced by $\varphi$ and ${\mathcal H} = W_{_{\varphi^*}} \oplus U_{_{\varphi^*}}$ is the AST-decomposition determined by $\varphi^*$, then $$\text{dim}_{\mathbb C} \, W_{_{\varphi^*}} \leq \text{dim}_{\mathbb C} \, W_{_{\varphi}}\, .$$
\end{cor}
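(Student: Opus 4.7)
The plan is to extract the key inclusion that is already implicit in the proof of Proposition \ref{prop:adjoint-is-boun-fp-34837} and then read off the dimension inequality from elementary linear algebra of quotients.

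First I would recall that the proof of Proposition \ref{prop:adjoint-is-boun-fp-34837} established two facts about the orthogonal complement $W_{_\varphi}^\perp$: namely, $W_{_\varphi}^\perp$ is $\varphi^*$-invariant, and the restriction $(\varphi^*)_{\vert_{W_{_\varphi}^\perp}}$ is nilpotent (because $(\varphi^*)^n$ vanishes there, where $n=i(\varphi)$). This is exactly the hypothesis of the second bullet of Lemma \ref{lem:inv-sub-AST-349346} applied to $\varphi^*$, so one immediately obtains the inclusion
\[
W_{_\varphi}^\perp \subseteq U_{_{\varphi^*}}\, .
\]

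Next I would compare dimensions using the two direct sum decompositions $\mathcal H = W_{_\varphi} \oplus W_{_\varphi}^\perp$ (which holds because $W_{_\varphi}$ is finite-dimensional, hence closed) and $\mathcal H = W_{_{\varphi^*}} \oplus U_{_{\varphi^*}}$. The first gives the linear isomorphism ${\mathcal H}/W_{_\varphi}^\perp \simeq W_{_\varphi}$, and the second gives ${\mathcal H}/U_{_{\varphi^*}} \simeq W_{_{\varphi^*}}$. Since $W_{_\varphi}^\perp \subseteq U_{_{\varphi^*}}$, the canonical surjection ${\mathcal H}/W_{_\varphi}^\perp \twoheadrightarrow {\mathcal H}/U_{_{\varphi^*}}$ yields
\[
\text{dim}_{\mathbb C} \, W_{_{\varphi^*}} = \text{dim}_{\mathbb C} \, ({\mathcal H}/U_{_{\varphi^*}}) \leq \text{dim}_{\mathbb C} \, ({\mathcal H}/W_{_\varphi}^\perp) = \text{dim}_{\mathbb C} \, W_{_\varphi}\, ,
\]
which is the desired inequality.

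There is no real obstacle here; once one observes that the proof of the preceding proposition already produces a $\varphi^*$-invariant subspace on which $\varphi^*$ is nilpotent, the uniqueness part of the AST-decomposition (Lemma \ref{lem:inv-sub-AST-349346}) does all the work. The only small point to be careful about is to invoke finite-dimensionality of $W_{_\varphi}$ to guarantee that it is closed, so that $\mathcal H$ really splits as $W_{_\varphi}\oplus W_{_\varphi}^\perp$ and the quotient identifications above are legitimate.
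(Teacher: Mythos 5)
Your proposal is correct and follows essentially the same route as the paper: the key inclusion $W_{_{\varphi}}^\perp \subseteq U_{_{\varphi^*}}$ obtained from Lemma \ref{lem:inv-sub-AST-349346} together with the nilpotency of $(\varphi^*)_{\vert_{W_{_\varphi}^\perp}}$ established in the proof of Proposition \ref{prop:adjoint-is-boun-fp-34837}, followed by the surjection ${\mathcal H}/W_{_{\varphi}}^\perp \twoheadrightarrow {\mathcal H}/U_{_{\varphi^*}}$ and the identifications ${\mathcal H}/W_{_{\varphi}}^\perp \simeq W_{_{\varphi}}$, ${\mathcal H}/U_{_{\varphi^*}} \simeq W_{_{\varphi^*}}$. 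You merely spell out the intermediate steps more explicitly than the paper does, which is fine.
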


\begin{proof} Bearing in mind that, from Lemma \ref{lem:inv-sub-AST-349346} and Proposition \ref{prop:adjoint-is-boun-fp-34837}, one has that $W_{_{\varphi}}^\perp \subseteq U_{_{\varphi^*}}$, then there exists a surjective linear map of finite-dimensional $\mathbb C$-vector spaces $${\mathcal H}/W_{_{\varphi}}^\perp \longrightarrow {\mathcal H}/U_{_{\varphi^*}} \to 0\, .$$

Thus, since ${\mathcal H}/U_{_{\varphi^*}} \simeq W_{_{\varphi^*}}$ and ${\mathcal H}/W_{_{\varphi}}^\perp \simeq W_{_{\varphi}}$ as $\mathbb C$-vector spaces, one obtains that $$\text{dim}_{\mathbb C} \, W_{_{\varphi^*}} \leq \text{dim}_{\mathbb C} \, W_{_{\varphi}}$$\noindent and the statement is proved.
\end{proof}

\begin{lem} \label{l:coin-dim-w-w-23346} If $\varphi \in B_{fp} (\mathcal H)$, ${\mathcal H} = W_{_{\varphi}} \oplus U_{_{\varphi}}$ is the AST-decomposition induced by $\varphi$ and ${\mathcal H} = W_{_{\varphi^*}} \oplus U_{_{\varphi^*}}$ is the AST-decomposition determined by $\varphi^*$, then $$\text{dim}_{\mathbb C} \, W_{_{\varphi^*}} = \text{dim}_{\mathbb C} \, W_{_{\varphi}}\, .$$
\end{lem}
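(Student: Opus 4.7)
The plan is to exploit the symmetry of the adjoint operation together with Corollary \ref{cor-dim-less-3946}, which already gives one of the two inequalities.

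First I would observe that, by Proposition \ref{prop:adjoint-is-boun-fp-34837}, the adjoint $\varphi^{*}$ itself belongs to $B_{fp}(\mathcal H)$, so every result proved so far for bounded finite potent endomorphisms applies equally well to $\varphi^{*}$. In particular, Corollary \ref{cor-dim-less-3946} applied to $\varphi^{*}$ yields
\[
\dim_{\mathbb C} W_{_{(\varphi^{*})^{*}}} \leq \dim_{\mathbb C} W_{_{\varphi^{*}}}.
\]

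Next I would invoke the standard fact that on a Hilbert space the adjoint is involutive, i.e.\ $(\varphi^{*})^{*} = \varphi$. Hence $W_{_{(\varphi^{*})^{*}}} = W_{_{\varphi}}$, and the previous inequality becomes
\[
\dim_{\mathbb C} W_{_{\varphi}} \leq \dim_{\mathbb C} W_{_{\varphi^{*}}}.
\]
Combining this with the reverse inequality already furnished by Corollary \ref{cor-dim-less-3946} produces the desired equality $\dim_{\mathbb C} W_{_{\varphi^{*}}} = \dim_{\mathbb C} W_{_{\varphi}}$.

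There is essentially no obstacle here: the substance of the argument has been packaged into Proposition \ref{prop:adjoint-is-boun-fp-34837} and Corollary \ref{cor-dim-less-3946}, and the lemma follows purely formally by applying the latter to both $\varphi$ and $\varphi^{*}$ and using the involution $(\varphi^{*})^{*} = \varphi$. The only thing one needs to be sure of is that the AST-decomposition is indeed characterised intrinsically by $\varphi$ (so that applying the construction to $(\varphi^{*})^{*}$ really recovers the AST-decomposition of $\varphi$), which is guaranteed by its uniqueness as recorded in Subsection \ref{ss:finite-potent}.
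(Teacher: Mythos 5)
Your argument is correct and coincides with the paper's own proof: both apply Corollary \ref{cor-dim-less-3946} to $\varphi^{*}$ as well as to $\varphi$, use Proposition \ref{prop:adjoint-is-boun-fp-34837} to know $\varphi^{*}\in B_{fp}(\mathcal H)$, and conclude via the involution $(\varphi^{*})^{*}=\varphi$ together with the uniqueness of the AST-decomposition. Nothing is missing.
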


\begin{proof} Since $\varphi^*$ is also bounded finite potent and it is clear that $(\varphi^*)^* = \varphi$, then it follows from Corollary \ref{cor-dim-less-3946} that $$\text{dim}_{\mathbb C} \, W_{_{\varphi}} = \text{dim}_{\mathbb C} \, W_{_{(\varphi^*)^*}} \leq \text{dim}_{\mathbb C} \, W_{_{\varphi^*}} \leq \text{dim}_{\mathbb C} \, W_{_{\varphi}}\, ,$$\noindent
from where the claim is deduced.
\end{proof}

\begin{prop} \label{pr:w-perp-is-eq-u-342352463} If $\varphi \in B_{fp} (\mathcal H)$, ${\mathcal H} = W_{_{\varphi}} \oplus U_{_{\varphi}}$ is the AST-decomposition induced by $\varphi$ and ${\mathcal H} = W_{_{\varphi^*}} \oplus U_{_{\varphi^*}}$ is the AST-decomposition determined by $\varphi^*$, then $U_{_{\varphi^*}} = W_{_{\varphi}}^\perp$.
\end{prop}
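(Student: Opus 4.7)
The plan is to upgrade the inclusion $W_\varphi^\perp \subseteq U_{\varphi^*}$, which is already implicit in the previous results, to an equality by a dimension count of the complementary parts.

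First I would assemble the inclusion. By Proposition \ref{prop:adjoint-is-boun-fp-34837}, the subspace $W_\varphi^\perp$ is $\varphi^*$-invariant and the restriction $(\varphi^*)_{\vert_{W_\varphi^\perp}}$ is nilpotent (this was shown via the pairing $\langle h, (\varphi^*)^n(v)\rangle_{\mathcal H} = \langle \varphi^n(h), v\rangle_{\mathcal H} = 0$ for $v \in W_\varphi^\perp$). By the second bullet of Lemma \ref{lem:inv-sub-AST-349346} applied to $\varphi^*$, this yields $W_\varphi^\perp \subseteq U_{\varphi^*}$.

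Next I would run the dimension argument. Since $W_\varphi$ is finite-dimensional, it is closed, so $\mathcal H = W_\varphi \oplus W_\varphi^\perp$ and in particular $\mathcal H / W_\varphi^\perp \simeq W_\varphi$ has $\mathbb C$-dimension equal to $\dim_{\mathbb C} W_\varphi$. On the other hand, $\mathcal H / U_{\varphi^*} \simeq W_{\varphi^*}$, which by Lemma \ref{l:coin-dim-w-w-23346} satisfies $\dim_{\mathbb C} W_{\varphi^*} = \dim_{\mathbb C} W_\varphi$. The inclusion $W_\varphi^\perp \subseteq U_{\varphi^*}$ then induces a canonical surjection
$$\mathcal H / W_\varphi^\perp \twoheadrightarrow \mathcal H / U_{\varphi^*},$$
and since both quotients are $\mathbb C$-vector spaces of the same finite dimension, this surjection is an isomorphism. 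Consequently the kernel $U_{\varphi^*} / W_\varphi^\perp$ is trivial, which gives $U_{\varphi^*} = W_\varphi^\perp$.

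I do not anticipate a serious obstacle: the conceptual content — namely that $W_\varphi^\perp$ is already nilpotent-invariant and that the AST-decomposition is unique — was done in the preceding propositions. The only delicate point is making sure that the surjection $\mathcal H/W_\varphi^\perp \twoheadrightarrow \mathcal H/U_{\varphi^*}$ is genuinely well-defined and surjective (it is, because $W_\varphi^\perp \subseteq U_{\varphi^*}$ and the projection $\mathcal H \twoheadrightarrow \mathcal H/U_{\varphi^*}$ is surjective), after which equality follows from equality of finite dimensions.
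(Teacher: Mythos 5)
Your proposal is correct and follows essentially the same route as the paper: the inclusion $W_{_{\varphi}}^\perp \subseteq U_{_{\varphi^*}}$ (via the nilpotency of $(\varphi^*)_{\vert_{W_{_\varphi}^\perp}}$ and Lemma \ref{lem:inv-sub-AST-349346}), followed by comparing the finite-dimensional quotients ${\mathcal H}/W_{_{\varphi}}^\perp \simeq W_{_{\varphi}}$ and ${\mathcal H}/U_{_{\varphi^*}} \simeq W_{_{\varphi^*}}$, whose equal dimensions (Lemma \ref{l:coin-dim-w-w-23346}) force $U_{_{\varphi^*}}/W_{_{\varphi}}^\perp = \{0\}$. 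The paper phrases this as a short exact sequence rather than a surjection, but the argument is the same.
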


\begin{proof} If we consider the exact sequence of finite-dimensional $\mathbb C$-vector spaces $$0 \to U_{_{\varphi^*}}/W_{_{\varphi}}^\perp \longrightarrow {\mathcal H}/W_{_{\varphi}}^\perp \longrightarrow {\mathcal H}/U_{_{\varphi^*}} \to 0\, ,$$\noindent bearing in mind that ${\mathcal H}/U_{_{\varphi^*}} \simeq W_{_{\varphi^*}}$ and ${\mathcal H}/W_{_{\varphi}}^\perp \simeq W_{_{\varphi}}$ as $\mathbb C$-vector spaces, one has that $U_{_{\varphi^*}}/W_{_{\varphi}}^\perp = \{0\}$ and the assertion is checked.
\end{proof}

\begin{cor} \label{cor:ind-fp-ind-adju-236} If $\varphi \in B_{fp} (\mathcal H)$, then $i(\varphi) = i (\varphi^*)$.
\end{cor}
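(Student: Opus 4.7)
The plan is to prove the two inequalities $i(\varphi^*) \leq i(\varphi)$ and $i(\varphi) \leq i(\varphi^*)$ separately, the second following from the first by a symmetry argument.

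First I would set $n = i(\varphi)$. By definition this means $\varphi^n$ vanishes on $U_{_\varphi}$ and $\varphi^n(\mathcal H) = W_{_\varphi}$, so in particular $\varphi^n(h) \in W_{_\varphi}$ for every $h \in \mathcal H$. Now by Proposition \ref{pr:w-perp-is-eq-u-342352463} we know that $U_{_{\varphi^*}} = W_{_\varphi}^\perp$, so it suffices to show $(\varphi^*)^n$ vanishes on $W_{_\varphi}^\perp$. But this is precisely the computation already used inside the proof of Proposition \ref{prop:adjoint-is-boun-fp-34837}: for any $v \in W_{_\varphi}^\perp$ and any $h \in \mathcal H$,
$$\langle h, (\varphi^*)^n(v)\rangle_{\mathcal H} = \langle \varphi^n(h), v\rangle_{\mathcal H} = 0,$$
because $\varphi^n(h) \in W_{_\varphi}$ and $v \perp W_{_\varphi}$. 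Hence $(\varphi^*)^n(v) = 0$ for all $v \in U_{_{\varphi^*}}$, so $(\varphi^*)_{\vert_{U_{_{\varphi^*}}}}$ is nilpotent of order at most $n$, giving $i(\varphi^*) \leq n = i(\varphi)$.

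For the reverse inequality, I would invoke the symmetry $(\varphi^*)^* = \varphi$, together with the fact that by Proposition \ref{prop:adjoint-is-boun-fp-34837} the operator $\varphi^*$ is itself in $B_{fp}(\mathcal H)$. Applying the inequality just proved to $\varphi^*$ in place of $\varphi$ yields $i(\varphi) = i((\varphi^*)^*) \leq i(\varphi^*)$, and combining both inequalities gives the desired equality. I do not anticipate a serious obstacle here: the whole argument is essentially a repackaging of the ingredients assembled in Proposition \ref{prop:adjoint-is-boun-fp-34837} and Proposition \ref{pr:w-perp-is-eq-u-342352463}; the only thing to be careful about is to phrase the nilpotency bound in terms of the restriction to $U_{_{\varphi^*}}$ rather than on all of $\mathcal H$, which is exactly what the identification $U_{_{\varphi^*}} = W_{_\varphi}^\perp$ provides.
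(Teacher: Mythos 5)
Your proof is correct and follows essentially the same route as the paper: the paper likewise deduces $i(\varphi^*)\leq i(\varphi)$ from the identification $U_{_{\varphi^*}}=W_{_\varphi}^\perp$ of Proposition \ref{pr:w-perp-is-eq-u-342352463} together with the computation $\langle h,(\varphi^*)^n(v)\rangle_{\mathcal H}=\langle \varphi^n(h),v\rangle_{\mathcal H}=0$ from the proof of Proposition \ref{prop:adjoint-is-boun-fp-34837}, and then concludes by the same symmetry chain $i(\varphi)=i((\varphi^*)^*)\leq i(\varphi^*)\leq i(\varphi)$. The only difference is that you spell out explicitly the step the paper cites by reference.
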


\begin{proof} Since Proposition \ref{pr:w-perp-is-eq-u-342352463} shows that $U_{_{\varphi^*}} = W_{_{\varphi}}^\perp$, it follows from the argumentation made in the proof of Proposition \ref{prop:adjoint-is-boun-fp-34837} that $i(\varphi^*) \leq i (\varphi)$. Thus, bearing in mind that $(\varphi^*)^* = \varphi$, we obtain that  $i(\varphi) = i (\varphi^*)$ because $$i(\varphi) =i( (\varphi^*)^*) \leq i(\varphi^*) \leq i (\varphi)\, .$$
\end{proof} 

\begin{prop} \label{pr:u-perp-is-eq-u-348363} If $\varphi \in B_{fp} (\mathcal H)$, ${\mathcal H} = W_{_{\varphi}} \oplus U_{_{\varphi}}$ is the AST-decomposition induced by $\varphi$ and ${\mathcal H} = W_{_{\varphi^*}} \oplus U_{_{\varphi^*}}$ is the AST-decomposition determined by $\varphi^*$, then $W_{_{\varphi^*}} = U_{_{\varphi}}^\perp$.
\end{prop}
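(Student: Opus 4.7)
The natural plan is to reduce to Proposition \ref{pr:w-perp-is-eq-u-342352463} by duality, using that $(\varphi^*)^* = \varphi$ together with $\varphi^* \in B_{fp}(\mathcal H)$ (Proposition \ref{prop:adjoint-is-boun-fp-34837}). Applying Proposition \ref{pr:w-perp-is-eq-u-342352463} to the bounded finite potent endomorphism $\varphi^*$ in place of $\varphi$ yields
$$U_{(\varphi^*)^*} = W_{\varphi^*}^\perp\, ,$$
which, since $(\varphi^*)^* = \varphi$, rewrites as $U_\varphi = W_{\varphi^*}^\perp$.

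To conclude, I would take orthogonal complements on both sides and use that $W_{\varphi^*}$ is closed in $\mathcal H$. The latter is immediate from Lemma \ref{l:coin-dim-w-w-23346}, which guarantees that $\dim_{\mathbb C} W_{\varphi^*} = \dim_{\mathbb C} W_\varphi < \infty$, so $W_{\varphi^*}$ is a finite-dimensional (hence closed) subspace. Then the general Hilbert-space identity $(L^\perp)^\perp = L$ for closed $L$ gives
$$U_\varphi^\perp = (W_{\varphi^*}^\perp)^\perp = W_{\varphi^*}\, ,$$
which is precisely the claim.

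There is really no obstacle here once one notices that the roles of $\varphi$ and $\varphi^*$ can be swapped by virtue of $(\varphi^*)^* = \varphi$; the only point to be careful about is to invoke the finite-dimensionality of $W_{\varphi^*}$ (and not that of $W_\varphi$) when applying $(L^\perp)^\perp = L$, since it is $W_{\varphi^*}$ that appears inside the double orthogonal. An alternative, equally short route is to imitate the proof of Proposition \ref{pr:w-perp-is-eq-u-342352463} verbatim: from $W_{\varphi^*} \subseteq U_\varphi^\perp$ (which follows because $U_\varphi$ is $\varphi$-invariant, so $U_\varphi^\perp$ is $\varphi^*$-invariant, and on $U_\varphi^\perp$ one checks as in Proposition \ref{prop:adjoint-is-boun-fp-34837} that $\varphi^*$ restricts to an automorphism-on-the-core part, placing $W_{\varphi^*}$ inside $U_\varphi^\perp$ via Lemma \ref{lem:inv-sub-AST-349346}) one gets a surjection of finite-dimensional spaces $\mathcal H/U_\varphi \twoheadrightarrow \mathcal H / U_\varphi^\perp \hookleftarrow W_{\varphi^*}$, and a dimension count using Lemma \ref{l:coin-dim-w-w-23346} forces equality. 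I would prefer the duality argument since it is shorter and avoids re-doing the bookkeeping.
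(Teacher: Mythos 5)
Your proposal is correct and is essentially the paper's own proof: apply Proposition \ref{pr:w-perp-is-eq-u-342352463} to $\varphi^*$, use $(\varphi^*)^* = \varphi$ to get $U_{\varphi} = W_{\varphi^*}^\perp$, and take orthogonal complements using that $W_{\varphi^*}$ is closed (being finite-dimensional). The justification of closedness via Lemma \ref{l:coin-dim-w-w-23346} is a fine (even slightly more explicit) version of what the paper leaves implicit.
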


\begin{proof} Since $U_{_{\varphi}} = U_{_{(\varphi^*)^*}} = (W_{_{\varphi^*}})^\perp$ and $W_{_{\varphi^*}}$ is closed, then $$W_{_{\varphi^*}} = (W_{_{\varphi^*}}^\perp)^\perp = U_{_{\varphi}}^\perp\, .$$
\end{proof}

A direct consequence of Proposition \ref{pr:w-perp-is-eq-u-342352463} and Proposition \ref{pr:u-perp-is-eq-u-348363} is

\begin{cor} \label{cor:fp-sum-dimu-perp-m-pr73} If $\mathcal H$ is a Hilbert space, $\varphi \in B_{fp} (\mathcal H)$ and ${\mathcal H} = W_{_{\varphi}} \oplus U_{_{\varphi}}$ is the AST-decomposition induced by $\varphi$, then $${\mathcal H} = W_{_{\varphi}}^\perp \oplus U_{_{\varphi}}^\perp\, .$$
\end{cor}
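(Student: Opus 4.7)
The plan is to use the AST-decomposition applied to the adjoint $\varphi^*$ and then translate it back to $\varphi$ via the two identifications established just before the statement. By Proposition \ref{prop:adjoint-is-boun-fp-34837}, the adjoint $\varphi^* \in B_{fp}(\mathcal H)$, so it has its own AST-decomposition
\[
\mathcal H = W_{\varphi^*} \oplus U_{\varphi^*}.
\]
This is the only non-trivial input I need, since everything else is substitution.

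Next, I would invoke Proposition \ref{pr:w-perp-is-eq-u-342352463}, which identifies $U_{\varphi^*} = W_\varphi^\perp$, and Proposition \ref{pr:u-perp-is-eq-u-348363}, which identifies $W_{\varphi^*} = U_\varphi^\perp$. Plugging both into the displayed decomposition above gives
\[
\mathcal H = U_\varphi^\perp \oplus W_\varphi^\perp,
\]
which after reordering the summands is exactly the desired equality $\mathcal H = W_\varphi^\perp \oplus U_\varphi^\perp$.

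There is essentially no obstacle here: the real work was in proving Propositions \ref{pr:w-perp-is-eq-u-342352463} and \ref{pr:u-perp-is-eq-u-348363}, and once those are in hand the corollary reduces to applying the AST-decomposition to $\varphi^*$ and relabelling the two summands. The only thing worth checking is that a direct sum decomposition of $\mathcal H$ yields the same direct sum after interchanging the order of the two summands, which is immediate.
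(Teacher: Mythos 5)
Your proof is correct and coincides with the paper's argument: the corollary is stated there as a direct consequence of Propositions \ref{pr:w-perp-is-eq-u-342352463} and \ref{pr:u-perp-is-eq-u-348363}, applied to the AST-decomposition ${\mathcal H} = W_{_{\varphi^*}} \oplus U_{_{\varphi^*}}$ of the adjoint, exactly as you do. Nothing is missing.
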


Bearing in mind that $W_{_{\varphi}}$ and $U_{_{\varphi}}$ are $\varphi^*$-invariant, Corollary \ref{cor:fp-sum-dimu-perp-m-pr73} shows that $\varphi^*$ can be computed from $(\varphi^*)_{\vert_{W_{_\varphi}^\perp}}$ and $(\varphi^*)_{\vert_{U_{_\varphi}^\perp}}$. Accordingly, if $h,h' \in {\mathcal H}$ such that $h = w + u$ and $h' = w' + u'$ with $w\in W_{_{\varphi}}$, $u \in U_{_{\varphi}}$, $w' \in U_{_{\varphi}}^\perp$ and $u' \in W_{_{\varphi}}^\perp$, one has that $$\begin{aligned} <\varphi (h), h'>_{\mathcal H} &= <\varphi (w), w'>_{\mathcal H} + <\varphi (u), u'>_{\mathcal H} = \\ &=  <w, \varphi^* (w')>_{\mathcal H} + <u, \varphi^* (u')>_{\mathcal H} = \\ &= <h, \varphi^* (h')>_{\mathcal H}\, .\end{aligned}$$

Moreover, from the above statements we also immediately prove that the adjoint of a bounded nilpotent endomorphism $\varphi$ is also a bounded nilpotent endomorphism. Accordingly, if $\varphi \in B_{fp} (\mathcal H)$ is a nilpotent endomorphism, then $W_{_{\varphi}} = \{0\}$ and it follows from Proposition \ref{pr:w-perp-is-eq-u-342352463} that $U_{_{\varphi^*}} = \mathcal H$, from where we deduce that $\varphi^*$ is nilpotent.

\begin{lem} \label{lem:core-nilp-part-nilp384363}  If $\mathcal H$ is a Hilbert space, $\varphi \in B_{fp} (\mathcal H)$ with CN-decomposition $\varphi = \varphi_{_1} + \varphi_{_2}$ and $\varphi ^*= (\varphi^*)_{_1} + (\varphi^*)_{_2}$ is the CN-decomposition of $\varphi^*$, then $(\varphi^*)_{_1} = (\varphi_{_1})^*$ and $(\varphi^*)_{_2} = (\varphi_{_2})^*$.
\end{lem}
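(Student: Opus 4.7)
The plan is to invoke the uniqueness of the CN-decomposition proved in \cite[Theorem 3.2]{Pa-CN}. By that uniqueness, it suffices to exhibit a decomposition $\varphi^* = \psi + \phi$ with $\psi, \phi \in B_{fp}(\mathcal H)$ finite potent, $i(\psi) \leq 1$, $\phi$ nilpotent, and $\psi \circ \phi = \phi \circ \psi = 0$; the candidates are of course $\psi = (\varphi_{_1})^*$ and $\phi = (\varphi_{_2})^*$.

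First I would check that this really is a decomposition of $\varphi^*$. Since the adjoint is additive and $\varphi_{_1}, \varphi_{_2} \in B_{fp}(\mathcal H)$ by Lemma \ref{l:core-part-is-bunded-394363} and Lemma \ref{l:eq-cor-nilprt-is-bunded-3963}, both $(\varphi_{_1})^*$ and $(\varphi_{_2})^*$ are well-defined bounded operators, belong to $B_{fp}(\mathcal H)$ by Proposition \ref{prop:adjoint-is-boun-fp-34837}, and $(\varphi_{_1})^* + (\varphi_{_2})^* = (\varphi_{_1} + \varphi_{_2})^* = \varphi^*$. The commutation relations are immediate: from $\varphi_{_1} \circ \varphi_{_2} = 0$ we obtain $(\varphi_{_2})^* \circ (\varphi_{_1})^* = (\varphi_{_1} \circ \varphi_{_2})^* = 0$, and analogously for the other order.

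Next I would verify the nilpotency of $(\varphi_{_2})^*$: since $\varphi_{_2}$ is nilpotent, say $(\varphi_{_2})^r = 0$, then $((\varphi_{_2})^*)^r = ((\varphi_{_2})^r)^* = 0$, as already noted in the paragraph preceding Lemma \ref{lem:core-nilp-part-nilp384363}. For the index condition on $\psi = (\varphi_{_1})^*$ I would apply Corollary \ref{cor:ind-fp-ind-adju-236}, giving $i((\varphi_{_1})^*) = i(\varphi_{_1}) \leq 1$ since $\varphi_{_1}$ is the core part of $\varphi$ and hence has index at most one by (\ref{eq:index1}).

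The main (minor) obstacle is simply to be careful that the uniqueness statement in \cite[Theorem 3.2]{Pa-CN} is being applied to the operator $\varphi^*$ in the correct way, so that $\psi$ and $\phi$ are forced to coincide with the core and nilpotent parts of $\varphi^*$ respectively. Once the three properties above are in place, uniqueness yields $(\varphi^*)_{_1} = (\varphi_{_1})^*$ and $(\varphi^*)_{_2} = (\varphi_{_2})^*$, concluding the proof.
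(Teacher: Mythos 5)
Your proposal is correct and follows essentially the same route as the paper: write $\varphi^* = (\varphi_{_1})^* + (\varphi_{_2})^*$, check that $(\varphi_{_2})^*$ is nilpotent, that $i((\varphi_{_1})^*) \leq 1$ via Corollary \ref{cor:ind-fp-ind-adju-236}, that the two adjoints annihilate each other, and then invoke the uniqueness of the CN-decomposition from \cite[Theorem 3.2]{Pa-CN}. The only difference is that you spell out the boundedness and finite potency of $(\varphi_{_1})^*$ and $(\varphi_{_2})^*$ (via Lemma \ref{l:core-part-is-bunded-394363}, Lemma \ref{l:eq-cor-nilprt-is-bunded-3963} and Proposition \ref{prop:adjoint-is-boun-fp-34837}) a bit more explicitly than the paper does, which is harmless.
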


\begin{proof} It follows from the properties of the adjoint operator that $\varphi^* = (\varphi_{_1})^* + (\varphi_{_2})^*$ and from Corollary \ref{cor:ind-fp-ind-adju-236} that $i((\varphi_{_1})^*) \leq 1$. 

Also, since $\varphi_{_1} \circ \varphi_{_2} = \varphi_{_2} \circ \varphi_{_1} = 0$, one has that $$(\varphi_{_1})^* \circ (\varphi_{_2})^* = (\varphi_{_2})^* \circ (\varphi_{_1})^* = 0\, .$$

Finally, bearing in mind that $(\varphi_{_1})^*$ and $(\varphi_{_2})^*$ are finite potent and $(\varphi_{_2})^*$ is nilpotent, the statement is a direct consequence of the uniqueness of the CN-decomposition of a finite potent endomorphism.
\end{proof}

We shall now study the spectrum of the adjoint operator $\varphi^*$.

\begin{prop} \label{p:spectrum-adjoint-ope-349363}  If $\mathcal H$ is a Hilbert space and $\varphi \in B_{fp} (\mathcal H)$, given a non-zero $\lambda \in \mathbb C$, one has that $\lambda \in \sigma (\varphi^*)$ if and only if ${\overline \lambda} \in \sigma (\varphi)$. Moreover, the algebraic multiplicity of a non-zero eigenvalue $\lambda$ of $\varphi^*$ coincides with the algebraic multiplicity of ${\overline \lambda}$ as an eigenvalue of $\varphi$.
\end{prop}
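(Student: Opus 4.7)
The statement splits naturally into a set-theoretic claim about $\sigma(\varphi^*)$ and a quantitative claim about multiplicities; I will treat them in turn. For the first, I invoke the classical identity $\sigma(T^*) = \{\bar\mu : \mu \in \sigma(T)\}$, valid for every bounded operator on a Hilbert space, and combine it with Lemma \ref{l:finite-spectu35343j} applied to both $\varphi$ and $\varphi^*$ (the latter is bounded finite potent by Proposition \ref{prop:adjoint-is-boun-fp-34837}). Since every non-zero element of either spectrum is then an eigenvalue, the first assertion follows immediately.

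For the multiplicity claim, the key geometric input is that the inner product induces a non-degenerate sesquilinear pairing between the finite-dimensional spaces $W_\varphi$ and $W_{\varphi^*}$. Non-degeneracy follows from Propositions \ref{pr:w-perp-is-eq-u-342352463} and \ref{pr:u-perp-is-eq-u-348363}: if $w\in W_\varphi$ pairs trivially with every $w'\in W_{\varphi^*}=U_\varphi^\perp$, then $w\in (U_\varphi^\perp)^\perp=U_\varphi$ (here $U_\varphi=\Ker\varphi^{i(\varphi)}$ is closed as the kernel of a bounded operator), whence $w\in W_\varphi\cap U_\varphi=\{0\}$; the argument on the other side is symmetric. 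Lemma \ref{l:coin-dim-w-w-23346} ensures that the two spaces have the same finite dimension, so the pairing is perfect. The invariance of $W_\varphi$ under $\varphi$ and of $W_{\varphi^*}$ under $\varphi^*$, together with
$$\langle \varphi(w),w'\rangle_{\mathcal H}=\langle w,\varphi^*(w')\rangle_{\mathcal H},\qquad w\in W_\varphi,\ w'\in W_{\varphi^*},$$
identifies $\varphi^*|_{W_{\varphi^*}}$ as the adjoint of $\varphi|_{W_\varphi}$ with respect to this pairing.

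Fixing bases of $W_\varphi$ and $W_{\varphi^*}$ and writing $G$ for the (invertible) Gram matrix of the pairing, $A$ for the matrix of $\varphi|_{W_\varphi}$ and $B$ for that of $\varphi^*|_{W_{\varphi^*}}$, the adjoint relation above translates into $A^T G=G\bar B$, so $\bar B$ is similar to $A^T$ and hence $B$ is similar to $\overline{A^T}$. Consequently the characteristic polynomial of $\varphi^*|_{W_{\varphi^*}}$ is the complex conjugate of that of $\varphi|_{W_\varphi}$, and the eigenvalues of the two restrictions are matched by $\mu\leftrightarrow\bar\mu$ with identical algebraic multiplicities. To transport this back to $\varphi$ and $\varphi^*$ themselves, I apply Lemma \ref{l:oinci-eig-alg-38363} on both sides (whose proof in fact establishes $\Ker(\varphi-\lambda\,\text{Id})^n=\Ker(\varphi|_{W_\varphi}-\lambda\,\text{Id})^n$ for every non-zero $\lambda$ and every $n$, and analogously for $\varphi^*$), concluding the proof.

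The main technical step is the finite-dimensional adjunction argument, and within it the verification that the Gram matrix $G$ is invertible; once non-degeneracy of the pairing is in hand, the rest is an assembly of results already obtained earlier in Section \ref{s:adj-finite-potent-397}.
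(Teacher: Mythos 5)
Your proof is correct, but it follows a genuinely different route from the paper's. The paper proves both assertions simultaneously by a dimension count on generalized kernels: since $(\varphi^* - {\overline \lambda}\,\mathrm{Id})^n = [(\varphi - \lambda\,\mathrm{Id})^n]^*$, one has $\Ker (\varphi^* - {\overline \lambda}\,\mathrm{Id})^n = [\Im (\varphi - \lambda\,\mathrm{Id})^n]^\perp \simeq {\mathcal H}/\Im(\varphi - \lambda\,\mathrm{Id})^n \simeq W_{_\varphi}/\Im(\varphi_{\vert_{W_{_\varphi}}} - \lambda\,\mathrm{Id})^n$, using that $U_{_\varphi} \subseteq \Im(\varphi - \lambda\,\mathrm{Id})^n$ for $\lambda \ne 0$; rank--nullity on the finite-dimensional $W_{_\varphi}$ then gives $\dim_{\mathbb C}\Ker(\varphi^* - {\overline \lambda}\,\mathrm{Id})^n = \dim_{\mathbb C}\Ker(\varphi - \lambda\,\mathrm{Id})^n$ for every $n$, which yields both the spectral correspondence and the equality of multiplicities, and it needs only the AST-decomposition of $\varphi$ itself. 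You instead use the structural results $W_{_{\varphi^*}} = U_{_\varphi}^\perp$, $U_{_{\varphi^*}} = W_{_\varphi}^\perp$ and $\dim_{\mathbb C} W_{_{\varphi^*}} = \dim_{\mathbb C} W_{_\varphi}$ to produce a perfect sesquilinear pairing between $W_{_\varphi}$ and $W_{_{\varphi^*}}$ under which $(\varphi^*)_{\vert_{W_{_{\varphi^*}}}}$ is adjoint to $\varphi_{\vert_{W_{_\varphi}}}$; your matrix identity $A^T G = G\bar B$ is right under the paper's convention (linearity in the first argument), so the characteristic polynomial of $(\varphi^*)_{\vert_{W_{_{\varphi^*}}}}$ is the conjugate of that of $\varphi_{\vert_{W_{_\varphi}}}$, and the kernel identity inside the proof of Lemma \ref{l:oinci-eig-alg-38363}, applied to both $\varphi$ and $\varphi^*$, transports the multiplicities back correctly. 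Your argument is conceptually tidier, conjugating all the non-zero spectral data of the core in one stroke and giving Corollary \ref{c:adjoin-spectrum-is-conj-38363} and Proposition \ref{p:tra-det-adjoin-39363} essentially for free, but it leans on Lemma \ref{l:coin-dim-w-w-23346}, Propositions \ref{pr:w-perp-is-eq-u-342352463} and \ref{pr:u-perp-is-eq-u-348363}, and the classical fact $\sigma(T^*) = {\overline{\sigma(T)}}$ (which the paper does not state, deriving it instead as Corollary \ref{c:adjoin-spectrum-is-conj-38363}); the paper's count is more self-contained but proceeds one $\lambda$ and one $n$ at a time.
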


\begin{proof} To prove the statement, we only need to check that for every non-zero $\lambda \in \sigma (\varphi)$ one has that $$\text{dim}_{\mathbb C} \Ker (\varphi^* - {\overline \lambda} \text{Id})^n = \text{dim}_{\mathbb C} \Ker (\varphi - {\lambda} \text{Id})^n$$\noindent for every $n\in \mathbb N$.

Bearing in mind that $(\varphi^* - {\overline \lambda} \text{Id})^n = [(\varphi - {\lambda} \text{Id})^n]^*$, then $$\Ker (\varphi^* - {\overline \lambda} \text{Id})^n = [\text{Im}  (\varphi - {\lambda} \text{Id})^n]^\perp\, .$$

Moreover, if ${\mathcal H} = W_{_{\varphi}} \oplus U_{_{\varphi}}$ is again the AST-decomposition induced by $\varphi$, since $U_{_\varphi} \subseteq [\text{Im}  (\varphi - {\lambda} \text{Id})^n$ for every non-zero $\lambda \in \mathbb C$, and $W_{_\varphi}$ and $U_{_\varphi}$ are $(\varphi - {\lambda} \text{Id})^n$-invariant subspaces of $\mathcal H$, one has linear isomorphisms $$[\text{Im}  (\varphi - {\lambda} \text{Id})^n]^\perp \simeq {\mathcal H}/[\text{Im}  (\varphi - {\lambda} \text{Id})^n] \simeq W_{_\varphi}/[\text{Im}  (\varphi_{\vert_{W_{_\varphi}}} - {\lambda} \text{Id})^n]$$\noindent and, therefore, we deduce that $$\text{dim}_{\mathbb C} \Ker (\varphi^* - {\overline \lambda} \text{Id})^n = \text{dim}_{\mathbb C} \Ker (\varphi_{\vert_{W_{_\varphi}}} - {\lambda} \text{Id})^n = \text{dim}_{\mathbb C} \Ker (\varphi - {\lambda} \text{Id})^n$$\noindent for every $n\in \mathbb N$.
\end{proof}

A direct consequence of Proposition \ref{p:spectrum-adjoint-ope-349363} is

\begin{cor} \label{c:adjoin-spectrum-is-conj-38363} If $\mathcal H$ is a Hilbert space and $\varphi \in B_{fp} (\mathcal H)$, then $\sigma (\varphi^*) = {\overline {\sigma (\varphi)}}$.
\end{cor}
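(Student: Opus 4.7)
The plan is to split the verification into two parts: the non-zero spectrum and the point $0$. For the non-zero part, the statement is essentially immediate from Proposition~\ref{p:spectrum-adjoint-ope-349363}: a non-zero $\lambda$ lies in $\sigma(\varphi^*)$ if and only if $\overline{\lambda}$ lies in $\sigma(\varphi)$, which means the set of non-zero points of $\sigma(\varphi^*)$ is precisely the complex conjugate of the set of non-zero points of $\sigma(\varphi)$. Passing to unions over all non-zero $\lambda$, one obtains $\sigma(\varphi^*)\setminus\{0\} = \overline{\sigma(\varphi)\setminus\{0\}} = \overline{\sigma(\varphi)}\setminus\{0\}$.

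It therefore remains to show $0\in\sigma(\varphi^*)$ if and only if $0\in\sigma(\varphi)$ (noting $\overline{0}=0$, so the equivalence we want is symmetric). Here I would invoke Proposition~\ref{pro:spectru-finit-p3i3353}, which characterizes membership of $0$ in the spectrum of a bounded finite potent endomorphism purely in terms of the index: $0\in\sigma(\varphi)$ if and only if $i(\varphi)\geq 1$, and likewise $0\in\sigma(\varphi^*)$ if and only if $i(\varphi^*)\geq 1$ (using that $\varphi^*\in B_{fp}(\mathcal H)$ by Proposition~\ref{prop:adjoint-is-boun-fp-34837}, so Proposition~\ref{pro:spectru-finit-p3i3353} applies to it).

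Combining these two characterizations with Corollary~\ref{cor:ind-fp-ind-adju-236}, which asserts $i(\varphi)=i(\varphi^*)$, the equivalence at $0$ is immediate. Putting the non-zero case and the zero case together yields $\sigma(\varphi^*) = \overline{\sigma(\varphi)}$.

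I do not anticipate any real obstacle here: the non-trivial work has already been done in Proposition~\ref{p:spectrum-adjoint-ope-349363} (whose proof exploits the AST-decomposition to convert $\Ker(\varphi^*-\overline{\lambda}\mathrm{Id})^n$ into $[\mathrm{Im}\,(\varphi-\lambda\mathrm{Id})^n]^\perp$ and then to a quotient of $W_{_\varphi}$) and in Corollary~\ref{cor:ind-fp-ind-adju-236}. The only conceivable subtlety is remembering that Proposition~\ref{p:spectrum-adjoint-ope-349363} is stated only for non-zero $\lambda$, so the value $\lambda=0$ must be treated by a separate argument as above.
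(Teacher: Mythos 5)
Your proposal is correct and takes essentially the same route as the paper, which simply presents the corollary as a direct consequence of Proposition~\ref{p:spectrum-adjoint-ope-349363} with no further argument. Your explicit handling of the point $0$ via $i(\varphi)=i(\varphi^*)$ (Corollary~\ref{cor:ind-fp-ind-adju-236}) together with Proposition~\ref{pro:spectru-finit-p3i3353} and Proposition~\ref{prop:adjoint-is-boun-fp-34837} correctly fills in the one detail the paper leaves implicit.
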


Furthermore, one has that

\begin{prop} \label{p:tra-det-adjoin-39363} Given a Hilbert space $\mathcal H$ and a bounded finite potent endomorphism $\varphi \in B_{fp} (\mathcal H)$, one has that:
\begin{itemize}

\item $\tr_{\mathcal H} (\varphi^*) = {\overline {\tr_{\mathcal H} (\varphi)}}$;

\item $\Det^{\mathbb C}_{\mathcal H} (1 + \varphi^*) = {\overline {\Det^{\mathbb C}_{\mathcal H} (1 + \varphi) }}$. 

\end{itemize}
\end{prop}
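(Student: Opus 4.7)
The plan is to reduce both identities to the eigenvalue-level statement provided by Proposition \ref{p:spectrum-adjoint-ope-349363}. That proposition shows that the listing of non-zero eigenvalues of $\varphi^*$, counted with algebraic multiplicity, is exactly the complex conjugate of the corresponding listing for $\varphi$. Once this is in hand, both claims follow from the explicit eigenvalue formulas proved earlier.

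For the trace, I would invoke the explicit expression (\ref{eq:expli-tr-eig-38336}), namely
\[
\tr_{\mathcal H}(\varphi) = \sum_{i=1}^{s} \lambda_i(\varphi),
\]
and apply the same formula to $\varphi^* \in B_{fp}(\mathcal H)$, which is a legitimate move because Proposition \ref{prop:adjoint-is-boun-fp-34837} ensures that $\varphi^*$ is itself a bounded finite potent endomorphism. By Proposition \ref{p:spectrum-adjoint-ope-349363}, the listing $\{\lambda_i(\varphi^*)\}$ is $\{\overline{\lambda_i(\varphi)}\}$ up to reordering, so
\[
\tr_{\mathcal H}(\varphi^*) = \sum_{i=1}^{s} \overline{\lambda_i(\varphi)} = \overline{\sum_{i=1}^{s} \lambda_i(\varphi)} = \overline{\tr_{\mathcal H}(\varphi)}.
\]

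For the determinant, the same strategy applies but using (\ref{eq:wijnd7ydt353438}) instead:
\[
\Det^{\mathbb C}_{\mathcal H}(\text{Id} + \varphi) = \prod_{i=1}^{s}[1+\lambda_i(\varphi)].
\]
Applying this to $\varphi^*$ and again using Proposition \ref{p:spectrum-adjoint-ope-349363}, together with the fact that complex conjugation is a ring homomorphism (so it commutes with finite sums and products), gives
\[
\Det^{\mathbb C}_{\mathcal H}(\text{Id} + \varphi^*) = \prod_{i=1}^{s}[1+\overline{\lambda_i(\varphi)}] = \overline{\prod_{i=1}^{s}[1+\lambda_i(\varphi)]} = \overline{\Det^{\mathbb C}_{\mathcal H}(\text{Id}+\varphi)}.
\]

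There is no real obstacle here: the entire content of the proposition is already packaged in Proposition \ref{p:spectrum-adjoint-ope-349363} (eigenvalues of $\varphi^*$ are the conjugates of those of $\varphi$, with matching algebraic multiplicities) and in the eigenvalue expressions (\ref{eq:expli-tr-eig-38336}) and (\ref{eq:wijnd7ydt353438}). The only subtlety worth pointing out in the write-up is that both formulas require $\varphi^*$ to be bounded and finite potent so that the listing $\{\lambda_i(\varphi^*)\}$ is finite and the formulas apply; this is guaranteed by Proposition \ref{prop:adjoint-is-boun-fp-34837}.
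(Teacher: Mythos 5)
Your proof is correct and follows essentially the same route as the paper: the author likewise deduces both identities from the eigenvalue expressions (\ref{eq:expli-tr-eig-38336}) and (\ref{eq:wijnd7ydt353438}) combined with Proposition \ref{p:spectrum-adjoint-ope-349363}. Your additional remark that Proposition \ref{prop:adjoint-is-boun-fp-34837} is needed so that these formulas apply to $\varphi^*$ is a sensible point the paper leaves implicit.
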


\begin{proof} Bearing in mind the expressions (\ref{eq:expli-tr-eig-38336}) and (\ref{eq:wijnd7ydt353438}), the assertions follows from Proposition \ref{p:spectrum-adjoint-ope-349363}.
\end{proof}

\begin{exam} \label{ex:boun-fp-adjo-38463} Let $\{u_j\}_{j\in \mathbb N}$ be an orthonormal basis of a separable Hilbert space $\mathcal H$. If we consider $\varphi \in B_{fp} (\mathcal H)$ determined by the conditions $$\varphi (u_j) = \left \{\begin{array}{ccl} (1+i) u_1 + u_2 + u_4 & \text{ if } & j = 1 \\ 2u_1 + (5-3i)  u_3 & \text{ if } & j = 2 \\ u_1 - 2u_2 + 3u_3 - 2u_4 & \text{ if } &  j = 3 \\ 0 & \text{ if } & j = 4 \\ \frac{1}{j^2} u_4 & \text{ if } & j \geq 5 \end{array} \right . \, ,$$\noindent an easy computation shows that $$\varphi^* (u_j) = \left \{\begin{array}{ccl} (1-i) u_1 + 2u_2 + u_3 & \text{ if } & j = 1 \\ u_1 - 2u_3 & \text{ if } & j = 2 \\ (5+3i) u_2 + 3u_3  & \text{ if } & j = 3 \\ u_1 - 2u_3 + \sum_{h\geq 5} \frac{1}{h^2} u_h & \text{ if } & j = 4 \\ 0  & \text{ if } & j \geq 5 \end{array} \right . \, ,$$

Thus, since $W_{_{\varphi}} = \langle u_1, u_2+u_4, u_3\rangle$ and $U_{_{\varphi}} = {\overline {\langle u_i \rangle_{i\geq 4}}}$, one has that:
\begin{itemize}

\item $W_{_{\varphi^*}} = U_{_{\varphi}}^\perp = \langle u_1, u_2, u_3 \rangle$;

\item $U_{_{\varphi^*}} = W_{_{\varphi}}^\perp = \langle u_2 - u_4\rangle \oplus {\overline {\langle u_j \rangle_{j\geq 5}}}$.
\end{itemize}

Also, it is clear that $i(\varphi) = i (\varphi^*) = 2$.

Moreover, since the explicit expressions of the core part and the nilpotent part of $\varphi$ are $$\varphi_{_1} (u_j) = \left \{\begin{array}{ccl} (1+i) u_1 + u_2 + u_4 & \text{ if } & j = 1 \\ 2u_1 + (5-3i) u_3 & \text{ if } & j = 2 \\ u_1 - 2u_2 + 3u_3 - 2u_4 & \text{ if } & j = 3 \\ 0 & \text{ if } & j \geq 4  \end{array} \right .$$\noindent and $$\varphi_{_2} (u_j) = \left \{\begin{array}{ccl} 0 & \text{ if } & j \leq 4 \\ \frac{1}{j^2} u_4 & \text{ if } & j \geq 5 \end{array} \right . \, ,$$\noindent with adjoint operators $$(\varphi_{_1})^* (u_j) = \left \{\begin{array}{ccl} (1-i) u_1 + 2u_2 + u_3 & \text{ if } & j = 1 \\ u_1 - 2u_3 & \text{ if } & j = 2 \\ (5+3i) u_2 + 3u_3  & \text{ if } & j = 3 \\ u_1 - 2u_3  & \text{ if } & j = 4 \\ 0  & \text{ if } & j \geq 5 \end{array} \right . \, ,$$\noindent and $$(\varphi_{_2})^* (u_j) = \left \{\begin{array}{ccl} 0  & \text{ if } & j \leq 3 \\  \sum_{h\geq 5} \frac{1}{h^2} u_h & \text{ if } & j = 4 \\ 0  & \text{ if } & j \geq 5 \end{array} \right . \, ,$$\noindent it is easy to check that these data are compatible with the statements of Lemma \ref{lem:core-nilp-part-nilp384363}.

Finally, bearing in mind that $$\varphi_{\vert_{W_{_\varphi}}} \equiv \begin{pmatrix} 1 + i & 2 & 1 \\ 1 & 0 & -2 \\ 0 & 5-3i & 3 \end{pmatrix} \, \text{ and } \, \varphi_{\vert_{W_{_{\varphi^*}}}} \equiv \begin{pmatrix} 1-i & 1 & 0 \\ 2 & 0 & 5+3i \\ 1 & -2 & 3 \end{pmatrix}\, $$\noindent in the bases $\{ u_1, u_2+u_4, u_3\}$ of $W_{_\varphi}$ and $\{ u_1, u_2, u_3\}$ of $W_{_{\varphi^*}}$ respectively, one has that $$\tr_{\mathcal H} (\varphi) = 4 + i \, ; \,  \tr_{\mathcal H} (\varphi^*) = 4 - i \, ; \Det_{\mathcal H} (\text{Id} + \varphi) = 15 + i  \, \text{ and } \Det_{\mathcal H} (\text{Id} + \varphi^*) = 15 - i  \, \, .$$
\end{exam}

\begin{rem} Given $\varphi \in B_{fp} (\mathcal H)$, although $W_{_\varphi}^\perp$ and $U_{_\varphi}^\perp$ are $\varphi^*$-invariant, we wish to point out that, in general, $W_{_\varphi}^\perp$ and $U_{_\varphi}^\perp$ are not $\varphi$-invariant. A counter-example for this fact is the bounded finite potent endomorphism studied in Example \ref{ex:boun-fp-adjo-38463}.
\end{rem}

\begin{rem}[Final Remark]  During the past few years the author of this work has extended several generalized inverses of finite square complex matrices to finite potent endomorphisms on infinite-dimensional inner product spaces in \cite{Pa-CN}, \cite{Pa-DMP} and \cite{Pa-Dr}. From the results of Section \ref{s:adj-finite-potent-397}, we hope, in forthcoming papers, to extend to bounded finite potent endomorphisms on arbitrary Hilbert spaces different ge\-neralized inverses of finite complex matrices that need the notion of the conjugate transpose matrix for their definitions.
\end{rem}

\medskip

{\bf Acknowledgment.-} The author would like to thank his colleague,  Dr. \'Angel A. Tocino Garc\'ia, in the Department of Mathematics of the University of Salamanca for his useful comments on properties of operators on Hilbert spaces.

\bigskip

\end{document}